\documentclass[a4paper,11pt,twoside]{article}
\usepackage{geometry}
\geometry{left=3.0cm,right=3.0cm,top=3cm,bottom=3cm}
\usepackage{hyperref}
\usepackage{graphicx}
\usepackage{subfigure}
\usepackage{float}
\usepackage{epstopdf}
\usepackage{color}
\usepackage{amsmath,amsthm}
\usepackage{amssymb}
\usepackage{amsfonts}
\usepackage[noend]{algpseudocode}
\usepackage{algorithmicx,algorithm}
\usepackage{cite}
\usepackage{tikz}
\usepackage{dsfont}

\newtheorem{thm}{Theorem}[section]
\newtheorem{cor}{Corollary}[section]
\newtheorem{lem}{Lemma}[section]

\newtheorem{exa}{Example}[section]
\newtheorem{defi}{Definition}[section]
\theoremstyle{remark}
\newtheorem{rem}{Remark}[section]
\numberwithin{equation}{section}

\newcommand{\bu}{{\bf u}}
\newcommand{\bv}{{\bf v}}
\newcommand{\diag}{{\rm diag}}
\newcommand{\rank}{{\rm rank}}
\newcommand{\range}{{\rm range}}
\newcommand{\R}{{\mathbb R}}
\newcommand{\Rmn}{{\mathbb R}^{m\times n}}

\newcommand{\BE}{\begin{equation}}
\newcommand{\EE}{\end{equation}}
\begin{document}
\title{Single-pass randomized QLP decomposition for low-rank approximation}

\author {Huan Ren\thanks{School of Mathematical Sciences, Xiamen University, Xiamen 360015, People's Republic of China (jxrh1994@163.com)}
\and Zheng-Jian Bai\thanks{School of Mathematical Sciences and Fujian Provincial Key Laboratory on Mathematical Modeling \& High Performance Scientific Computing,  Xiamen University, Xiamen 361005, People's Republic of China (zjbai@xmu.edu.cn). The research of this author was partially supported by the National Natural Science Foundation of China (No. 11671337) and the Fundamental Research Funds for the Central Universities (No. 20720180008).} }
\maketitle
\begin{abstract}
\noindent
The QLP decomposition is one of the effective algorithms to approximate singular value decomposition (SVD) in numerical linear algebra. In this paper, we propose some single-pass randomized QLP decomposition algorithms for computing the low-rank matrix approximation. Compared with the deterministic QLP decomposition, the complexity of the proposed algorithms does not increase significantly and the system matrix needs to be accessed only once. Therefore, our algorithms are very suitable for a large matrix stored outside of memory or generated by stream data. In the error analysis, we give the bounds of matrix approximation error and singular value approximation error. Numerical experiments also reported to verify our results.

\noindent{\textbf{Key words:}} QLP decomposition, randomized algorithm, single-pass, singular value, low-rank approximation
\end{abstract}
\section{Introduction}
Let $A\in\mathbb{R}^{m\times n}$ be a data matrix. The  low-rank approximation of $A$ is to compute two low-rank matrices $E\in\mathbb{R}^{m\times k}$ and $F\in\mathbb{R}^{k\times n}$ such that
\begin{equation}\label{equ:low-rank}
A\approx EF,
\end{equation}
where $\rank(E)=\rank(F)=k$, and $k\ll\min\{m,n\}$, the rank $k$ is given to us in advance.

In the era of big data, the data we deal with is often extremely large. In other words, the scale of the data matrix is very large. In this case, the low-rank approximation in the form of (\ref{equ:low-rank}) can greatly reduce the storage of the data matrix $A$ (i.e., we only need to store $E$ and $F$ instead of $A$). Low-rank approximation is one of the essential tools in scientific computing, including principal component analysis \cite{Jolliffe1986PCA,Rokhlin2009PCA,Halko2011PCA,Feng2018PCA}, data analysis \cite{Mahoney2011data,Drineas2016RNLA}, and fast approximate algorithms for PDEs \cite{Martinsson2011HSS,Ghysels2016HSS,Xia2010Cholesky}.

For a general matrix $A\in\mathbb{R}^{m\times n}$, we usually consider an rank-$k$  approximate singular value decomposition (SVD), i.e.,
\begin{equation}
A\approx U_{k}\Sigma_{k}V_{k}^{T},
\end{equation}
where $U_{k}=[\bu_{1},\bu_{2},\ldots,\bu_{k}]$, $\Sigma_{k}=\diag(\sigma_{1}(A),\sigma_{2}(A),\ldots,\sigma_{k}(A))$, and $V_{k}=[\bv_{1},\bv_{2},\ldots,\bv_{k}]$ with $\sigma_{1}(A)\geq\sigma_{2}(A)\geq\cdots\geq\sigma_{k}(A)\ge 0$, $\bu_{j}$ and $\bv_{j}$ are the left and right singular vectors corresponding to $\sigma_{j}(A)$, respectively. Such low-rank approximation is optimal as stated as follows:

\begin{thm}\cite{Goulb2013MC,Eckart1936low-rank}\label{thm:Eckart}
Given $A\in\mathbb{R}^{m\times n}$, for any integer $k$ with $1\leq k<r=\rank(A)$, define
$$A_{k}=\sum_{i=1}^{k}\sigma_{i}\bu_{i}\bv_{i}^{T}.$$
Then $$\|A-A_{k}\|_{2}=\min_{B\in\mathbb{R}^{m\times n} \atop \rank(B)\leq k}\|A-B\|_{2}=\sigma_{k+1},$$
and $$\|A-A_{k}\|_{F}=\min_{B\in\mathbb{R}^{m\times n} \atop \rank(B)\leq k}\|A-B\|_{F}=\sqrt{\sigma_{k+1}^{2}+\cdots+\sigma_{r}^{2}}.$$
\end{thm}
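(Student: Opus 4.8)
The plan is to treat the two norms separately, and in each case to first evaluate $\|A-A_{k}\|$ explicitly and then establish the matching lower bound $\|A-B\|\ge\|A-A_{k}\|$ for every $B\in\mathbb{R}^{m\times n}$ with $\rank(B)\le k$. Throughout I would work from a full SVD $A=U\Sigma V^{T}$ with $U=[\bu_{1},\ldots,\bu_{m}]$ and $V=[\bv_{1},\ldots,\bv_{n}]$, so that $A-A_{k}=\sum_{i=k+1}^{r}\sigma_{i}\bu_{i}\bv_{i}^{T}$. Since this expression is already in SVD form, it immediately gives $\|A-A_{k}\|_{2}=\sigma_{k+1}$ and $\|A-A_{k}\|_{F}^{2}=\sigma_{k+1}^{2}+\cdots+\sigma_{r}^{2}$; hence in both cases only the lower bound requires work.

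For the spectral norm, let $B$ satisfy $\rank(B)\le k$, so $\dim\ker(B)\ge n-k$. The subspace $\mathcal{S}=\mathrm{span}\{\bv_{1},\ldots,\bv_{k+1}\}$ has dimension $k+1$, hence $\dim(\ker(B)\cap\mathcal{S})\ge(n-k)+(k+1)-n=1$, and I can choose a unit vector $z=\sum_{i=1}^{k+1}c_{i}\bv_{i}\in\ker(B)\cap\mathcal{S}$ with $\sum_{i}|c_{i}|^{2}=1$. Then $(A-B)z=Az=\sum_{i=1}^{k+1}c_{i}\sigma_{i}\bu_{i}$, so $\|(A-B)z\|_{2}^{2}=\sum_{i=1}^{k+1}|c_{i}|^{2}\sigma_{i}^{2}\ge\sigma_{k+1}^{2}$, and therefore $\|A-B\|_{2}\ge\sigma_{k+1}$. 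Combined with the upper bound, this proves the spectral-norm assertion.

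For the Frobenius norm I would invoke a Weyl-type singular value perturbation inequality: for any $X,Y\in\mathbb{R}^{m\times n}$ and indices $i,j\ge1$ with $i+j-1\le\min\{m,n\}$, one has $\sigma_{i+j-1}(X+Y)\le\sigma_{i}(X)+\sigma_{j}(Y)$. Applying this with $X=A-B$, $Y=B$, and $j=k+1$, and using $\sigma_{k+1}(B)=0$ since $\rank(B)\le k$, yields $\sigma_{i+k}(A)\le\sigma_{i}(A-B)$ for every $i\ge1$. Squaring and summing over $i$ gives $\|A-B\|_{F}^{2}=\sum_{i\ge1}\sigma_{i}(A-B)^{2}\ge\sum_{i\ge1}\sigma_{i+k}(A)^{2}=\sigma_{k+1}^{2}+\cdots+\sigma_{r}^{2}=\|A-A_{k}\|_{F}^{2}$, which is exactly the desired bound.

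The main obstacle is the Frobenius case: the elementary dimension-counting trick that settles the spectral norm does not by itself control the sum of squared singular values, so one genuinely needs the perturbation inequality $\sigma_{i+j-1}(X+Y)\le\sigma_{i}(X)+\sigma_{j}(Y)$ (equivalently, a Ky Fan variational characterization of $\sum_{i\le k}\sigma_{i}$). If a fully self-contained argument were preferred, I would instead reduce to maximizing $\|PA\|_{F}^{2}$ over rank-$k$ orthogonal projections $P$ — using that, for a fixed $k$-dimensional range, the optimal $B$ is $PA$ — and then evaluate that maximum through the trace form $\mathrm{tr}(P\,AA^{T})$ and the eigenvalue optimization for the symmetric matrix $AA^{T}$; this is where the bulk of the technical care would lie.
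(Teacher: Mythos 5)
Your proof is correct. Note, however, that the paper itself offers no proof of this theorem: it is stated as a classical result and simply cited to Golub--Van Loan and to Eckart--Young, so there is no ``paper's own proof'' to compare against. Measured on its own terms, your argument is exactly the standard one found in those references. The upper bounds follow immediately from reading off the SVD of $A-A_{k}$. Your spectral-norm lower bound is the familiar dimension-counting argument: $\ker(B)$ has dimension at least $n-k$, $\mathrm{span}\{\bv_{1},\ldots,\bv_{k+1}\}$ has dimension $k+1$, so they intersect nontrivially, and evaluating $(A-B)z$ on a unit vector $z$ in the intersection gives $\|A-B\|_{2}\geq\sigma_{k+1}$; this is precisely the Golub--Van Loan proof. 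Your Frobenius-norm lower bound via the Weyl-type inequality $\sigma_{i+j-1}(X+Y)\leq\sigma_{i}(X)+\sigma_{j}(Y)$, specialized to $j=k+1$ with $\sigma_{k+1}(B)=0$, is Mirsky's classical argument and is also correct. One small stylistic point: you identify the genuinely nontrivial step (the Frobenius case needs a singular-value perturbation inequality, not just the dimension count) and sketch an alternative self-contained route through trace/projection optimization, which shows good awareness of where the difficulty lies; either route is acceptable, and no gap remains.
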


Theorem \ref{thm:Eckart} shows that the rank-$k$ truncated SVD provides the smallest error for the rank-$k$ approximation of $A$. Therefore, the truncated SVD is the best low-rank approximation with a given fixed rank. However, the computation of a SVD of a large matrix $A$ is very costly. Therefore, we wish to find an algorithm for computing a low-rank approximation to a large matrix. As expected, we hope the proposed algorithm is close to the quality that the SVD provides but needs  much lower cost.

The QLP decomposition was proposed by Stewart in 1999 \cite{Stewart1999QLP}, which can be regarded as an economical method for computing an approximate SVD. In fact, the QLP decomposition is equivalent to two consecutive QR decomposition with column pivoting (QRCP). Specifically, the QRCP is performed  on the data matrix $A$ in the sense that
\BE\label{qrcp}
AP_{0}=Q_{0}R_{0}\quad \mbox{and}\quad R_{0}^{T}P_{1}=Q_{1}L^{T},
\EE
where $P_{0}\in\R^{n\times n}$ and $P_{1}\in\R^{m\times m}$ two permutation matrices, $Q_{0}\in\R^{m\times m}$ and $Q_{1}\in\R^{n\times n}$ are two orthogonal matrices and $L\in\mathbb{R}^{m\times n}$ is a lower triangular matrix. The diagonal elements of $R_{0}$ are called the {\em $R$-values} and the diagonal elements of $L$ are called the {\em $L$-values}. Define $Q=Q_{0}P_{1}$, $P=P_{0}Q_{1}$, then $$A=Q_{0}P_{1}LQ_{1}^{T}P_{0}^{T}:=QLP^{T}.$$
Huckbay and Chan \cite{Huckaby2003QLP} showed that the $L$-values approximate the singular values of the original matrix $A$ with considerable fidelity. The truncated QLP decomposition of $A$ can be expressed as follows:
\begin{equation}\label{equ:TQLP}
A\approx Q_{k}L_{k}P_{k}^{T}:=A_{k},
\end{equation}
where both $Q_{k}\in\R^{m\times k}$ and $P_{k}\in\R^{n\times k}$ have orthonormal column vectors and $L_{k}\in\R^{k\times k}$ is lower triangular. The truncated QLP decomposition (\ref{equ:TQLP}) can also be regarded as a low-rank approximation of $A$. It is natural to expect the truncated QLP decomposition performs as the truncated SVD.

In recent years, randomized algorithms for low-rank approximation have attracted considerable attention \cite{Kaloorazi2018subspace-orbit,Shabat2018RLU,Guming2015Subspaceiter,Halko2011SIAMreview}. Compared with deterministic algorithms, randomized algorithms for low-rank approximation have the advantages of low complexity, fast running speed and easy implementation. However, these randomized algorithms need to access the original matrix $A$ at least twice, which is expensive for the large matrix stored outside of core memory or generated by stream data.

As we know, the cost of data communication is often much higher than the algorithm itself. In order to reduce the cost of data communication, some single-pass algorithms have been proposed \cite{Halko2011SIAMreview,Li2020single-pass,Bjarkason2019pass-efficient,Tropp2019streaming,Tropp2017sketching,Woodruff2014sketching}. In this paper, based on the idea of single-pass, we extend the work of Wu and Xiang \cite{Xiang2020RQLP} to the  single-pass randomized QLP decomposition for computing low-rank approximation, where  two randomized algorithms are provided. We also give the  bounds of matrix approximation error and singular value approximation error for the proposed randomized algorithms, which hold with high probability.

The rest of this paper is organized as follows. In Section 2 we give some preliminary  results related to subgaussian random matrices and some basic QLP decomposition algorithms. In Section 3 we propose two single-pass randomized QLP decomposition algorithms and present the corresponding  complexity analysis. In Section 4 we give the error analysis of the proposed randomized algorithms, including  matrix approximation error and singular value approximation error. Finally, some numerical examples and concluding remarks  are given  in Section 5 and Section 6, respectively.
\section{Preliminaries}
In this section, we review some preliminary lemmas on  subgaussian random matrices and some basic QLP decomposition algorithms.

In this paper, we use the following notations.  Let $\Rmn$ be the set of all $m\times n$ real matrices.
For any  $A\in \mathbb{R}^{m\times n}$, let $\sigma_{1}(A)\geq\sigma_{2}(A)\geq\cdots\geq\sigma_{q}(A)\ge 0$ denote the singular values of $A$, where $q=\min\{m,n\}$. Denote by $\|\cdot\|_{2}$, $\|\cdot\|_{F}$ the matrix $2$-norm and the matrix Frobenius norm, respectively. Let $A^T$ and  $A^{\dag}$ denote the transpose and the Moore-Penrose inverse of a matrix $A$ , respectively. In addition, $\mathbb{E}(\cdot)$ denotes expectation of random variable and $\mathbb{P}(\cdot)$ denotes probability of random event.
\subsection{Subgaussian matrix}
In this subsection, we recall some preliminary lemmas on subgaussian random matrices.
\begin{defi}
A random variable $X$ is called subgaussian if there exist constants $\beta,\kappa>0$ such that $$\mathbb{P}(|X|\geq t)\leq\beta e^{-\kappa t^{2}}~~~~\textrm{for all}~ t>0.$$
\end{defi}

\begin{defi}\cite{Litvak2005smallest}\label{def:general-gaussian}
Assume $\mu\geq1$ and $a_{1},a_{2}>0$. Let $\mathcal{A}(m,n,\mu,a_{1},a_{2})$ be the set of all $m\times n$ $(m>n)$ random matrices $A=(\xi_{ij})$ whose entries are centered independent identical distribution (i.i.d.) real-valued random variables satisfying the following conditions:
\begin{enumerate}
\item[1).] Moments: $\mathbb{E}(|\xi_{ij}|^{3})\leq\mu^{3}$;

\item[2).]  Norm: $\mathbb{P}(\|A\|_{2}>a_{1}\sqrt{m})\leq e^{-a_{2}m}$;

\item[3).] Variance: $\mathbb{E}(\xi_{ij}^{2})\geq1$.
\end{enumerate}
\end{defi}
\begin{rem}\label{rem:parameter0}
It is easy to find that subgaussian matrices and Gaussian matrices  are random matrices defined by Definition \ref{def:general-gaussian}. Specifically, if $A$ is subgaussian, then $A\in\mathcal{A}(m,n,\mu,a_{1},a_{2})$ and $a_{1}=6\mu\sqrt{a_{2}+4}$; if $A$ is a standard Gaussian random matrix, then $A\in\mathcal{A}(m,n,\mu,a_{1},a_{2})$ and $\mu=\left(\frac{4}{\sqrt{2\pi}}\right)^{\frac{1}{3}}$.
\end{rem}

The following lemma provides a lower bound of the smallest singular value of a randomized matrix, which holds with high probability.

\begin{lem}\cite{Litvak2005smallest}\label{lem:smallest-bound}
Let $\mu\geq1$, $a_{1},a_{2}>0$. Let $A\in\mathcal{A}(m,n,\mu,a_{1},a_{2})$ with $m=(1+\delta)n>(1+\frac{1}{\ln n})n$, where $\delta>0$. Then, there exist two positive constants $c_{1},c_{2}$ such that
\begin{equation}
\mathbb{P}(\sigma_{n}(A)\leq c_{1}\sqrt{m})\leq e^{-c_{2}m}.
\end{equation}
\end{lem}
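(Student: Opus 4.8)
The statement to prove is essentially a restatement of the main theorem of Litvak--Pajor--Rudelson--Tomczak-Jaegermann \cite{Litvak2005smallest}, specialized to the regime $m = (1+\delta)n$, so the plan is to reduce it to that reference rather than to reprove it from scratch. First I would recall the general structure of the argument: one wants a uniform-in-direction lower bound, i.e. to show that $\|A x\|_2 \geq c_1 \sqrt{m}\,\|x\|_2$ simultaneously for all $x$ in the unit sphere $S^{n-1}$, since $\sigma_n(A) = \min_{\|x\|_2=1}\|Ax\|_2$. The standard route is an $\varepsilon$-net argument: fix a net $\mathcal{N}$ of $S^{n-1}$ of cardinality at most $(3/\varepsilon)^n$, prove the lower bound pointwise for each fixed $x$ with failure probability exponentially small in $m$, take a union bound over $\mathcal{N}$, and then pass from the net to the whole sphere using the upper bound $\|A\|_2 \leq a_1\sqrt{m}$ (condition 2 in Definition 2.2) to control the approximation error.

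The pointwise step is where the moment and variance hypotheses enter. For a fixed unit vector $x$, the coordinates of $Ax$ are i.i.d.\ copies of $\sum_j \xi_{ij} x_j$, a centered random variable with variance $\sum_j \mathbb{E}(\xi_{ij}^2) x_j^2 \geq \|x\|_2^2 = 1$ by condition 3, and with third moment controlled via condition 1 (using, e.g., a Rosenthal-type or direct moment inequality, bounding $\mathbb{E}|\sum_j \xi_{ij}x_j|^3$ by a constant depending only on $\mu$). A small-ball / Paley--Zygmund estimate then shows that each coordinate of $Ax$ is of order $1$ with constant probability bounded away from $0$, and since the $m$ coordinates are independent, $\|Ax\|_2^2 = \sum_{i=1}^m (\text{coord})^2 \geq c\, m$ except on an event of probability $e^{-c' m}$ (a Chernoff/Bernstein bound for the sum of $m$ i.i.d.\ nonnegative terms). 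Here the key point is that the exponent is linear in $m$, which is what makes the subsequent union bound survive.

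The main obstacle — and the reason the hypothesis $m = (1+\delta)n > (1+\tfrac{1}{\ln n})n$ is imposed — is balancing the union bound: the net has size $(3/\varepsilon)^n = e^{n\ln(3/\varepsilon)}$, and we need $n\ln(3/\varepsilon) < c' m$, i.e. $\ln(3/\varepsilon) < c'(1+\delta)$. One cannot simply take $\varepsilon$ as small as one likes, because the net-to-sphere passage costs $\|A\|_2\,\varepsilon \leq a_1\sqrt{m}\,\varepsilon$, which must be a fixed fraction of the target $c_1\sqrt m$; so $\varepsilon$ is forced to be a small constant, and then the condition $m > (1+\tfrac1{\ln n})n$ guarantees there is enough ``room'' in the exponent for the net cardinality to be absorbed. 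Concretely, I would choose $c_1$ to be, say, half the pointwise constant $c$, choose $\varepsilon = c_1/(2a_1)$ (a fixed constant), and then pick $c_2 < c' - (1+\delta)^{-1}\ln(3/\varepsilon)$; the slow-growing condition on $m/n$ ensures this choice of $c_2$ is positive for large $n$. Finally I would combine the bad event from the norm hypothesis (probability $\leq e^{-a_2 m}$) with the bad event from the net union bound, so that $\mathbb{P}(\sigma_n(A)\leq c_1\sqrt m) \leq e^{-c_2 m}$ after possibly shrinking $c_2$. For the write-up in this paper I expect it suffices to cite \cite{Litvak2005smallest} directly and simply note that the hypotheses of Definition 2.2 are exactly those required there, rather than reproducing this net argument in full.
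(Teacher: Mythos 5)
The paper states this lemma as a direct citation from \cite{Litvak2005smallest} and gives no proof, which is exactly what you conclude at the end of your proposal; so your approach matches the paper's. Your sketch of the underlying $\varepsilon$-net argument (pointwise small-ball estimate, union bound over the net, passage from the net to the sphere via the norm bound, and the role of $m>(1+\tfrac{1}{\ln n})n$ in absorbing the net cardinality) is a faithful summary of how the cited result is actually proved, though none of it appears in the paper itself.
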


\begin{rem}\label{rem:parameter}
From \cite{Litvak2005smallest}, the exact value of $c_{1},c_{2}$ in Lemma \ref{lem:smallest-bound} are $$c_{1}=\frac{b}{e^{2}c_{3}}\left(\frac{b}{3e^{2}c_{3}a_{1}}\right)^{\frac{1}{\delta}},~ c_{2}=\min\left\{1,\frac{c^{''}}{2\mu^{6}},a_{2}\right\}-\frac{\ln3}{m},$$
where $c_{3}=4\sqrt{\frac{2}{\pi}}\left(\frac{2\mu^{9}}{a_{1}^{3}}+\sqrt{\pi}\right)$, $b=\min\left(\frac{1}{4},\frac{c^{'}}{5a_{1}\mu^{3}}\right)$, $c^{'}=\left(\frac{27}{2^{13}}\right)^{\frac{1}{2}}$ and $c^{''}=\frac{27}{2^{11}}$.
\end{rem}

\subsection{Randomized QLP decomposition}
In this subsection, we recall the  QLP decomposition and the randomized QLP decomposition. We first recall  the  QLP decomposition \cite{Stewart1999QLP}.
\begin{algorithm}\label{algor:QLP}
\caption{The QLP decomposition\cite{Stewart1999QLP}} 
\hspace*{0.02in} {\bf Input:} 
$A\in\mathbb{R}^{m\times n}$.\\
\hspace*{0.02in} {\bf Output:} 
Matrices $Q,L,P$ such that $A= QLP^{T}$, where $Q$ is a column orthogonal matrix, \\$P$ is a orthogonal matrix and $L$ is a lower triangular matrix.

\hspace*{0.02in} {\bf function} $[Q,L,P]=$QLP$(A)$
\begin{algorithmic}[1]
\State $[Q_{0},R_{0},P_{0}]=\textrm{QRCP}(A)$.
\State $[Q_{1},L^{T},P_{1}]=\textrm{QRCP}(R_{0}^{T})$.
\State $Q=Q_{0}P_{1}$, $P=P_{0}Q_{1}$.
\end{algorithmic}
\end{algorithm}

The {\tt MATLAB} pseudo-code of rank-$k$ randomized  QLP decomposition algorithm is described as in Algorithm 2 \cite{Xiang2020RQLP}.
\begin{algorithm}\label{algor:RQLP}
\caption{Randomized QLP decomposition(RQLP)\cite{Xiang2020RQLP}} 
\hspace*{0.02in} {\bf Input:} 
$A\in\mathbb{R}^{m\times n}$, target rank:  $k\geq2$, oversampling parameter: $p\geq2$, and number of columns sampled: $l=k+p$.\\
\hspace*{0.02in} {\bf Output:} 
Matrices $Q,L,P$ such that $A\approx QLP^{T}$, where $Q$ is a column orthogonal matrix, \\$P$ is a orthogonal matrix and $L$ is a lower triangular matrix.
\begin{algorithmic}[1]
\State $\Omega={\tt randn}(n,l)$. 
\State $Y=A\Omega$.
\State $[V,R]={\tt qr}(Y,0)$.
\State $B=V^{T}A$.
\State $[\widehat{Q},L,P]={\rm QLP}(B)$.
\State $Q=V\widehat{Q}$.
\end{algorithmic}
\end{algorithm}

\section{Single-pass randomized QLP decomposition}
In this section, we present two  single-pass randomized QLP decomposition algorithms.
\subsection{Regular single-pass randomized QLP decomposition}\label{subsect:SPRQLP}
In this subsection, we give a regular single-pass randomized QLP decomposition algorithm for computing the low-rank approximation to a data matrix $A\in\Rmn$. To calculate the low-rank approximation of $A$, we first construct a low-rank matrix $V$ with orthonormal columns such that $A\approx VV^{T}A$ and  $\range(V)\approx\range(A)$. We observe that, in Algorithm 2, $\range(V)\approx\range(A)$ and $VV^{T}$ is an approximate orthogonal projector on $\range(A)$. Thus
\begin{equation}\label{equ:orh-project}
A\approx VV^{T}A.
\end{equation}

A single-pass randomized algorithm should realize that each entry of the input matrix can only be accessed once. To do so, we wish replace the matrix $B=V^{T}A$ in Step 4 of Algorithm 2 by another expression without $A$.
We note that, for the matrix $V$ generated by Algorithm 2,  we have $A\approx VV^{T}A$.  Then, for $B=V^{T}A$, we have $A\approx VB$. Premultiplying $\Omega_{2}$  on both sides of $A\approx VB$ we get
$$Y_{2}=\Omega_{2}A\approx\Omega_{2}VB.$$
Therefore, the matrix $B$ can be approximately expressed as $$B\approx(\Omega_{2}V)^{\dag}Y_{2}.$$

Next, we give the single-pass randomized QLP decomposition algorithm, which is stated in Algorithm 3.

\begin{algorithm}\label{algor:SPRQLP}
\caption{Single-pass randomized QLP decomposition (SPRQLP)} 
\hspace*{0.02in} {\bf Input:} 
$A\in\mathbb{R}^{m\times n}$, target rank:  $k\geq2$, oversampling parameter: $p\geq2$, number of columns sampled: $l_{1}=k+p$, and number of rows sampled: $l_{2}\geq l_{1}$.\\
\hspace*{0.02in} {\bf Output:} 
Matrices $Q,L,P$ such that $A\approx QLP^{T}$, where $Q$ is a column orthogonal matrix, \\$P$ is a orthogonal matrix and $L$ is a lower triangular matrix.
\begin{algorithmic}[1]
\State $\Omega_{1}=\texttt{randn}(n,l_{1})$, $\Omega_{2}=\texttt{randn}(l_{2},m)$. 
\State $Y_{1}=A\Omega_{1}$, $Y_{2}=\Omega_{2}A$.
\State $[V,R]=\texttt{qr}(Y_{1},0)$.
\State $B=(\Omega_{2}V)^{\dag}Y_{2}$.
\State $[\widehat{Q},L,P]=\textrm{QLP}(B)$.
\State $Q=V\widehat{Q}$.
\end{algorithmic}
\end{algorithm}

On  the complexity of Algorithm 3, we have the following remarks.

\begin{itemize}
\item{Step 1:} Generating random matrices $\Omega_{1}, \Omega_{2}$ takes $\mathcal{O}(nl_{1}+ml_{2})$ operations;
\item{Step 2:} Computing $Y_{1}=A\Omega_{1}$ and $Y_{2}=\Omega_{2}A$ takes $\mathcal{O}(mn(l_{1}+l_{2}))$ operations;
\item{Step 3:} Computing unpivoted QR decomposition of $Y_{1}$ of size $m\times l_{1}$, takes $\mathcal{O}(ml_{1}^{2})$ operations;
\item{Step 4:} Computing $\Omega_{2}V$ takes $\mathcal{O}(ml_{1}l_{2})$ operations, computing Moore-Penrose inverse of $\Omega_{2}V$ takes $\mathcal{O}(l_{1}^{2}l_{2}+l_{1}^{3}+l_{1}^{2}l_{2})=\mathcal{O}(l_{1}^{2}l_{2}+l_{1}^{3})$ operations, and multiplying it by $Y_{2}$ takes $\mathcal{O}(nl_{1}l_{2})$ operations;
\item{Step 5:} Computing column pivoting QR decomposition of $B$ of size $l_{1}\times n$, takes $\mathcal{O}(nl_{1}^{2})$ operations;
\item{Step 6:} Computing column pivoting QR decomposition of $R_{0}^{T}$ of size $n\times l_{1}$, takes $\mathcal{O}(nl_{1}^{2})$ operations;
\item{Step 7:} Computing $VQ_{0}P_{1}$ takes $\mathcal{O}(ml_{1}^{2})$ operations, computing $P_{0}Q_{1}$ takes $\mathcal{O}(n^{3})$ operations.
\end{itemize}

The total complexity of Algorithm 3 is $$\mathcal{C}_{SPRQLP}=\mathcal{O}\left(nl_{1}+ml_{2}+mn(l_{1}+l_{2})+(m+n)l_{1}l_{2}+(m+n+l_{2})l_{1}^{2}+l_{1}^{3}+n^{3}\right).$$
It is easy to see that the total complexity of Algorithm 2 is
$$\mathcal{C}_{RQLP}=\mathcal{O}\left(nl+mnl+(m+n)l^{2}+l^{3}+n^{3}\right).$$

We note that $l=l_{1}$ and $k\leq l_{1}\leq l_{2}\leq\min\{m,n\}$. Hence, Algorithm 2 has  slightly lower complexity than Algorithm 3. As we know, the cost of data communication is even more expensive than the algorithm itself. In particular, when the data is stored outside the core memory and the data matrix is very large, the cost of data communication is much larger than the algorithm itself. We observe that Algorithm 2 needs to access the data matrix $A$ twice. Thus, the total computation cost of Algorithm 2 may be much more expensive than Algorithm 3.

\subsection{Subspace-orbit single-pass randomized QLP decomposition}
In this subsection, we consider replacing Gauss randomized matrix with a sketch of input matrix $A$, i.e., choosing $\Omega_{2}=Y_{1}^{T}=(A\Omega_{1})^{T}$. As in \cite{Kaloorazi2018subspace-orbit}, we propose a subspace-orbit single-pass randomized QLP decomposition algorithm (SORQLP) for computing a low-rank approximation of $A$.

From the analysis in Subsection \ref{subsect:SPRQLP},  for the matrices $V$ and $B$ generated by Algorithm 2, we have $A\approx VB$. Premultiplying on both sides of  $A\approx VB$  by $Y_{1}^{T}$ yields
 $$Y_{2}=Y_{1}^{T}A\approx Y_{1}^{T}VB.$$

Thus, $$B\approx(Y_{1}^{T}V)^{\dag}Y_{2}.$$

The SORQLP is described as in Algorithm 4.

\begin{algorithm}\label{algor:SORQLP}
\caption{Subspace-orbit single-pass randomized QLP decomposition (SORQLP)} 
\hspace*{0.02in} {\bf Input:} 
$A\in\mathbb{R}^{m\times n}$, target rank:  $k\geq2$, oversampling parameter: $p\geq2$, number of columns sampled: $l=k+p$.\\
\hspace*{0.02in} {\bf Output:} 
Matrices $Q,L,P$ such that $A\approx QLP^{T}$, where $Q$ is a column orthogonal matrix, \\$P$ is a orthogonal matrix and $L$ is a lower triangular matrix.
\begin{algorithmic}[1]
\State $\Omega=\texttt{randn}(n,l)$. 
\State $Y_{1}=A\Omega$, $Y_{2}=Y_{1}^{T}A$.
\State $[V,R]=\texttt{qr}(Y_{1},0)$.
\State $B=(Y_{1}^{T}V)^{\dag}Y_{2}$.
\State $[\widehat{Q},L,P]=\textrm{QLP}(B)$.
\State $Q=V\widehat{Q}$.
\end{algorithmic}
\end{algorithm}

We have the following remarks on the computational  complexity of Algorithm 4.

\begin{itemize}
\item{Step 1:} Generating random matrices $\Omega$ takes $\mathcal{O}(nl)$ operations;
\item{Step 2:} Computing $Y_{1}=A\Omega$ and $Y_{2}=Y_{1}^{T}A$ takes $\mathcal{O}(mnl)$ operations;
\item{Step 3:} Computing unpivoted QR decomposition of $Y_{1}$ of size $m\times l$, takes $\mathcal{O}(ml^{2})$ operations;
\item{Step 4:} Computing $Y_{1}^{T}V$ takes $\mathcal{O}(ml^{2})$ operations, computing Moore-Penrose inverse of $Y_{1}^{T}V$ takes $\mathcal{O}(l^{3})$ operations, and multiplying it by $Y_{2}$ takes $\mathcal{O}(nl^{2})$ operations;
\item{Step 5:} Computing column pivoting QR decomposition of $B$ of size $l\times n$, takes $\mathcal{O}(nl^{2})$ operations;
\item{Step 6:} Computing column pivoting QR decomposition of $R_{0}^{T}$ of size $n\times l$, takes $\mathcal{O}(nl^{2})$ operations;
\item{Step 7:} Computing $VQ_{0}P_{1}$ takes $\mathcal{O}(ml^{2})$ operations, computing $P_{0}Q_{1}$ takes $\mathcal{O}(n^{3})$ operations.
\end{itemize}

Therefore, the total complexity of Algorithm 4 is $$\mathcal{C}_{SORQLP}=\mathcal{O}(nl+mnl+(m+n)l^{2}+l^{3}+n^{3}),$$
which is the same order of magnitude as the complexity of Algorithm 2. Similarly, considering the cost of data communication, the computation cost of Algorithm 4 is much cheaper than Algorithm 2.

\section{Error analysis}
In this section, we evaluate the performance of the proposed two single-pass QLP decomposition algorithms in terms of matrix approximation error and singular value approximation error.

In what follows, we assume that all Gaussian random matrices have full rank. We first recall some necessary lemmas.

\begin{lem}\cite{Shabat2018RLU}\label{lem:AGF-F}
Let $A\in \mathbb{R}^{m\times n}$ and let $k$ and $l$ be positive integers such that $\left(1+\frac{1}{\ln k}\right)k<l<\min\{m,n\}$.  If $\Omega\in\mathbb{R}^{n\times l}$ is a standard Gaussian random matrix, then there exists a matrix $F\in\mathbb{R}^{l\times n}$ such that
\begin{equation}\label{equ:4.1}
\|A\Omega F-A\|_{2}\leq\sqrt{\frac{a_{1}^{2}n}{c_{1}^{2}l}+1}\cdot\sigma_{k+1}(A),
\end{equation}
and
\begin{equation}\label{equ:4.2}
\|F\|_{2}\leq\frac{1}{c_{1}\sqrt{l}}
\end{equation}
with probability not less than $1-e^{-c_{2}l}-e^{-a_{2}n}$, where  the constants $a_{1},a_{2},c_{1}$, and $c_{2}$ are defined  as in Lemma \ref{lem:smallest-bound}.
\end{lem}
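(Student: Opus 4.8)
The plan is to construct the matrix $F$ explicitly and then estimate the two quantities. Let $\Omega\in\mathbb{R}^{n\times l}$ be the standard Gaussian matrix and let $A=U\Sigma W^{T}$ be an SVD of $A$. Write $W=[W_{1}\ W_{2}]$, where $W_{1}\in\mathbb{R}^{n\times k}$ collects the top $k$ right singular vectors. Since $\Omega$ is Gaussian, $\Omega_{1}:=W_{1}^{T}\Omega\in\mathbb{R}^{k\times l}$ and $\Omega_{2}:=W_{2}^{T}\Omega$ are again Gaussian and independent; in particular $\Omega_{1}^{T}\in\mathbb{R}^{l\times k}$ falls into the class $\mathcal{A}(l,k,\mu,a_{1},a_{2})$ of Definition 2.3 with $l>(1+\tfrac{1}{\ln k})k$, so Lemma 2.5 applies: with probability at least $1-e^{-c_{2}l}$ we have $\sigma_{k}(\Omega_{1})=\sigma_{k}(\Omega_{1}^{T})>c_{1}\sqrt{l}$, hence $\Omega_{1}$ has full row rank $k$ and $\|\Omega_{1}^{\dagger}\|_{2}=1/\sigma_{k}(\Omega_{1})<1/(c_{1}\sqrt{l})$. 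Separately, item 2) of Definition 2.3 applied to $\Omega_{2}^{T}\in\mathbb{R}^{l\times (n-k)}$ (or to $\Omega$ directly) gives $\|\Omega_{2}\|_{2}\le a_{1}\sqrt{n}$ off an event of probability at most $e^{-a_{2}n}$; union-bounding, both estimates hold with probability at least $1-e^{-c_{2}l}-e^{-a_{2}n}$.

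The natural candidate is $F:=\Omega_{1}^{\dagger}W_{1}^{T}$, i.e.\ $F=(W_{1}^{T}\Omega)^{\dagger}W_{1}^{T}$. Then $\|F\|_{2}=\|\Omega_{1}^{\dagger}\|_{2}\le 1/(c_{1}\sqrt{l})$, which is exactly (2.8). For (2.7), compute $A\Omega F-A$ using the SVD: $A\Omega=U\Sigma W^{T}\Omega=U\Sigma\begin{bmatrix}\Omega_{1}\\ \Omega_{2}\end{bmatrix}$, and multiplying on the right by $F=\Omega_{1}^{\dagger}W_{1}^{T}$ and using $\Omega_{1}\Omega_{1}^{\dagger}=I_{k}$ one gets $A\Omega F=U\Sigma\bigl[\begin{smallmatrix}I_{k}\\ \Omega_{2}\Omega_{1}^{\dagger}\end{smallmatrix}\bigr]W_{1}^{T}$. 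Subtracting $A=U\Sigma\bigl[\begin{smallmatrix}W_{1}^{T}\\ W_{2}^{T}\end{smallmatrix}\bigr]$ and splitting $\Sigma=\mathrm{diag}(\Sigma_{1},\Sigma_{2})$ conformally, the top block of $A\Omega F-A$ in the $U$-coordinates vanishes and the bottom block equals $\Sigma_{2}\Omega_{2}\Omega_{1}^{\dagger}W_{1}^{T}-\Sigma_{2}W_{2}^{T}$. Hence
\[
\|A\Omega F-A\|_{2}\le\bigl\|\Sigma_{2}\Omega_{2}\Omega_{1}^{\dagger}\bigr\|_{2}+\|\Sigma_{2}\|_{2}
\le\|\Sigma_{2}\|_{2}\bigl(\|\Omega_{2}\|_{2}\|\Omega_{1}^{\dagger}\|_{2}+1\bigr),
\]
and since $\|\Sigma_{2}\|_{2}=\sigma_{k+1}(A)$, plugging in $\|\Omega_{2}\|_{2}\le a_{1}\sqrt{n}$ and $\|\Omega_{1}^{\dagger}\|_{2}\le 1/(c_{1}\sqrt{l})$ gives the bound $\bigl(\tfrac{a_{1}\sqrt{n}}{c_{1}\sqrt{l}}+1\bigr)\sigma_{k+1}(A)$. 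This is slightly weaker than (2.7), which has $\sqrt{\tfrac{a_{1}^{2}n}{c_{1}^{2}l}+1}$; the tighter form follows by observing that the two nonzero blocks $\Sigma_{2}\Omega_{2}\Omega_{1}^{\dagger}W_{1}^{T}$ and $-\Sigma_{2}W_{2}^{T}$ have ranges inside $\mathrm{range}(W_{1})$ and $\mathrm{range}(W_{2})$ respectively, which are orthogonal, so the $2$-norm of the sum is at most $\sqrt{\|\Sigma_{2}\Omega_{2}\Omega_{1}^{\dagger}\|_{2}^{2}+\|\Sigma_{2}\|_{2}^{2}}\le\sqrt{\tfrac{a_{1}^{2}n}{c_{1}^{2}l}+1}\,\sigma_{k+1}(A)$.

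The main obstacle is getting the sharp constant rather than the crude triangle-inequality version: one must exploit the block/orthogonality structure carefully (the column space of the first term lies in $\mathrm{range}(W_1)$, that of the second in $\mathrm{range}(W_2)$, and in the $U$-coordinate the first term occupies the ``$\Sigma_2$ rows'' only) so that the contributions add in quadrature. A secondary technical point is checking that $\Omega_{1}^{T}$ genuinely satisfies all three conditions of Definition 2.3 with the stated parameters so that Lemma 2.5 is applicable — this is where Remark 2.4 (Gaussian matrices lie in $\mathcal{A}$) is invoked — and that the two failure events can be union-bounded to yield the probability $1-e^{-c_{2}l}-e^{-a_{2}n}$. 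Everything else is routine SVD bookkeeping.
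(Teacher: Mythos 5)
The paper cites this lemma from Shabat et al.\ without reproducing the proof, so there is nothing in the paper to compare against; your reconstruction is correct and follows the standard route: take $F=(W_1^T\Omega)^{\dagger}W_1^T$, invoke Lemma~\ref{lem:smallest-bound} on $\Omega_1^T=(W_1^T\Omega)^T$ for the bound $\|\Omega_1^{\dagger}\|_2\le 1/(c_1\sqrt{l})$ and condition~2) of Definition~\ref{def:general-gaussian} applied to $\Omega$ itself for $\|\Omega_2\|_2\le\|\Omega\|_2\le a_1\sqrt{n}$, then split the error in the SVD coordinates. One small imprecision worth fixing: it is the \emph{row} spaces of $\Sigma_2\Omega_2\Omega_1^{\dagger}W_1^T$ and $\Sigma_2 W_2^T$ (not their column spaces/``ranges'', which live in the wrong ambient space) that sit inside $\mathrm{range}(W_1)$ and $\mathrm{range}(W_2)$; pass to the transpose, $(A\Omega F-A)^T = W_1(\Omega_1^{\dagger})^T\Omega_2^T\Sigma_2^T - W_2\Sigma_2^T$, and use $\|W_1 N_1-W_2 N_2\|_2\le\sqrt{\|N_1\|_2^2+\|N_2\|_2^2}$ (valid since $W_1^T W_2=0$ makes $W_1N_1x\perp W_2N_2x$ for every $x$) to get the quadrature bound cleanly; the conclusion is unchanged.
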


\begin{lem}\cite{Guming2015Subspaceiter}\label{lem:proj-error}
Let $A\in\mathbb{R}^{m\times n}$, whose  SVD is given by
\BE\label{a:svd}
A=U_m\Sigma V_n^{T},
\EE
where $\Sigma=\diag(\sigma_1(A),\ldots,\sigma_q(A))\in\Rmn$, $U_m\in\R^{m\times m}$ and  $V_n\in\R^{n\times n}$ are two orthogonal matrices. Suppose  $k\leq l$, $0\leq p\leq l-k$, and $\Omega\in\mathbb{R}^{n\times l}$ is a standard Gaussian random matrix. Let the reduced QR decomposition of $A\Omega$
\BE\label{a:qr}
A\Omega=QR,
\EE
where $Q\in\R^{m\times l}$ has orthonormal  columns and $R\in\R^{l\times l}$ is  upper triangular. Let \BE\label{p:vo}
V_n^{T}\Omega=\left[
                                           \begin{array}{c}
                                             \hat{\Omega}_{1} \\
                                             \hat{\Omega}_{2} \\
                                           \end{array}
                                         \right],\quad \hat{\Omega}_{1}\in\mathbb{R}^{k\times l}.
\EE
If $\hat{\Omega}_{1}$ has full row rank, then
\begin{equation}
\|A-QQ^{T}A\|_{F}\leq\sqrt{\frac{k\sigma_{1}^{2}(A)\sigma_{l-p+1}^{2}(A)\|\hat{\Omega}_{2}\|_{2}^{2}\|\hat{\Omega}_{1}^{\dag}\|_{2}^{2}}{\sigma_{l-p+1}^{2}(A)\|\hat{\Omega}_{2}\|_{2}^{2}\|\hat{\Omega}_{1}^{\dag}\|_{2}^{2}+\sigma_{1}^{2}(A)}+\sum\limits_{i=k+1}^{n}\sigma_{i}^{2}(A)},
\end{equation}
\begin{equation}\label{equ:4.4}
\|A-QQ^{T}A\|_{2}\leq\sqrt{\frac{k\sigma_{1}^{2}(A)\sigma_{l-p+1}^{2}(A)\|\hat{\Omega}_{2}\|_{2}^{2}\|\hat{\Omega}_{1}^{\dag}\|_{2}^{2}}{\sigma_{l-p+1}^{2}(A)\|\hat{\Omega}_{2}\|_{2}^{2}\|\hat{\Omega}_{1}^{\dag}\|_{2}^{2}+\sigma_{1}^{2}(A)}+\sigma_{k+1}^{2}(A)}.
\end{equation}
\end{lem}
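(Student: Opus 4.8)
\emph{Proof proposal.} Since the columns of $Q$ form an orthonormal basis of $\range(A\Omega)$, the matrix $P:=QQ^{T}$ is the orthogonal projector onto $\range(A\Omega)$, so the two claimed inequalities are bounds on $\|(I-P)A\|_{F}$ and $\|(I-P)A\|_{2}$. The plan is to reduce both to a single deterministic lower bound on the leading singular values of $Q^{T}A$: with $\tau:=\sigma_{l-p+1}^{2}(A)\,\|\hat\Omega_{2}\|_{2}^{2}\,\|\hat\Omega_{1}^{\dag}\|_{2}^{2}$, one has $\sigma_{j}^{2}(Q^{T}A)\ge\sigma_{j}^{2}(A)/(1+\tau/\sigma_{j}^{2}(A))$ for $j=1,\dots,k$, and (by the same argument) the same bound with $A$ replaced by its best rank-$k$ approximant $A_{k}$.

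Granting this estimate, the Frobenius bound is immediate: since $P$ is an orthogonal projector, $\|(I-P)A\|_{F}^{2}=\|A\|_{F}^{2}-\|Q^{T}A\|_{F}^{2}$, and keeping only the $k$ leading of the (at most $l\ge k$) squared singular values of $Q^{T}A$, together with $\sigma_{j}(Q^{T}A)\le\sigma_{j}(A)$, gives $\|(I-P)A\|_{F}^{2}\le\sum_{j=1}^{k}\bigl(\sigma_{j}^{2}(A)-\sigma_{j}^{2}(Q^{T}A)\bigr)+\sum_{i=k+1}^{n}\sigma_{i}^{2}(A)$; each term of the first sum is, by the estimate, at most $\sigma_{j}^{2}(A)\tau/(\sigma_{j}^{2}(A)+\tau)$, a quantity that increases in $\sigma_{j}^{2}(A)$ and is therefore at most $\sigma_{1}^{2}(A)\tau/(\sigma_{1}^{2}(A)+\tau)$, and summing $k$ copies yields the claim. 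For the $2$-norm I would split $A=A_{k}+(A-A_{k})$: the row spaces of $A_{k}$ and $A-A_{k}$ are orthogonal, hence $(I-P)A_{k}\bigl[(I-P)(A-A_{k})\bigr]^{T}=0$, so $\|(I-P)A\|_{2}^{2}\le\|(I-P)A_{k}\|_{2}^{2}+\|(I-P)(A-A_{k})\|_{2}^{2}\le\|(I-P)A_{k}\|_{F}^{2}+\sigma_{k+1}^{2}(A)$, and applying the Frobenius computation to $A_{k}$ bounds $\|(I-P)A_{k}\|_{F}^{2}$ by $k\sigma_{1}^{2}(A)\tau/(\sigma_{1}^{2}(A)+\tau)$, giving the second inequality.

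The substantive step is the singular-value estimate, which I can carry out in the slightly weaker form with $\sigma_{k+1}(A)$ in place of $\sigma_{l-p+1}(A)$. Write $A=U_{m}\Sigma V_{n}^{T}$, with $\Sigma_{1}=\diag(\sigma_{1}(A),\dots,\sigma_{k}(A))$ the leading block; then the leading $k$ rows of $U_{m}^{T}A\Omega$ are $\Sigma_{1}\hat\Omega_{1}$. Since $\hat\Omega_{1}$ has a right inverse, $\range(A\Omega)$ contains a $k$-dimensional subspace $W$ whose orthogonal projector $P_{W}$, written in the SVD basis, is built from $F:=\Sigma_{2}\hat\Omega_{2}\hat\Omega_{1}^{\dag}\Sigma_{1}^{-1}$ (a vanishing $\sigma_{j}(A)$ with $j\le k$ handled by continuity), where $\Sigma_{2}$ carries the trailing singular values. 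Because $W\subseteq\range(Q)$ and $P_{W}$ is a contraction, $\sigma_{j}(Q^{T}A)\ge\sigma_{j}(P_{W}A)$; a short computation with the projector formula, using $S:=\Sigma_{2}\hat\Omega_{2}\hat\Omega_{1}^{\dag}=F\Sigma_{1}$, gives $\sigma_{j}(P_{W}A)\ge\sigma_{j}\bigl((I+F^{T}F)^{-1/2}\Sigma_{1}\bigr)$, and since $\Sigma_{1}(I+F^{T}F)^{-1}\Sigma_{1}=\Sigma_{1}^{2}(\Sigma_{1}^{2}+S^{T}S)^{-1}\Sigma_{1}^{2}\succeq\Sigma_{1}^{2}(\Sigma_{1}^{2}+\|S\|_{2}^{2}I)^{-1}\Sigma_{1}^{2}$ (from $S^{T}S\preceq\|S\|_{2}^{2}I$), whose $j$-th eigenvalue is $\sigma_{j}^{4}(A)/(\sigma_{j}^{2}(A)+\|S\|_{2}^{2})$ (the map $x\mapsto x^{2}/(x+c)$ being increasing, so the diagonal entries are already in decreasing order), we obtain $\sigma_{j}^{2}(Q^{T}A)\ge\sigma_{j}^{2}(A)/(1+\|S\|_{2}^{2}/\sigma_{j}^{2}(A))$ with $\|S\|_{2}\le\sigma_{k+1}(A)\|\hat\Omega_{2}\|_{2}\|\hat\Omega_{1}^{\dag}\|_{2}$; the identical computation with $A_{k}$ in place of $A$ gives the matching bound for $Q^{T}A_{k}$.

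The hard part — and where I expect the real effort to go — is sharpening $\sigma_{k+1}(A)$ to $\sigma_{l-p+1}(A)$. Heuristically this should be possible because the construction above uses only the sampling directions needed to capture the top-$k$ subspace, leaving $l-k$ ``spare'' columns of $\Omega$ that can be used to enlarge $W$ so that the leakage matrix $S$ couples to $A$ only through $\sigma_{l-p+1}(A),\sigma_{l-p+2}(A),\dots$; the delicate point is to do this while still controlling the test matrix through $\|\hat\Omega_{1}^{\dag}\|_{2}$ and $\|\hat\Omega_{2}\|_{2}$ alone — not through the pseudoinverse of a larger leading block of $V_{n}^{T}\Omega$, whose norm need not be controlled by $\|\hat\Omega_{1}^{\dag}\|_{2}$. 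This refined estimate is precisely the content of \cite{Guming2015Subspaceiter}, which I would invoke directly; the reductions above then complete the proof.
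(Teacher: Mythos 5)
The paper states this result without proof: it is imported verbatim from \cite{Guming2015Subspaceiter}, so there is no ``paper's own proof'' to compare against. On its own terms, your sketch is sound and faithful to the structure of Gu's argument. The reductions are all correct: the identity $\|(I-QQ^{T})A\|_{F}^{2}=\|A\|_{F}^{2}-\|Q^{T}A\|_{F}^{2}$ for an orthogonal projector, discarding all but the top $k$ singular values of $Q^{T}A$, the monotonicity of $x\mapsto x\tau/(x+\tau)$ to pass from $\sigma_{j}(A)$ to $\sigma_{1}(A)$, the split $A=A_{k}+(A-A_{k})$ with $A_{k}(A-A_{k})^{T}=0$ for the $2$-norm, and the PSD comparison $S^{T}S\preceq\|S\|_{2}^{2}I$ inside the surrogate subspace $W\subseteq\range(A\Omega)$ are all the right moves and are carried out correctly.

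The gap is the one you name yourself: the argument as written produces the estimate only with $\sigma_{k+1}(A)$, i.e., the case $p=l-k$. For $p<l-k$ the stated bound is genuinely stronger, and the $k$-dimensional subspace $W$ you construct, driven entirely by the $k\times l$ block $\hat{\Omega}_{1}$, cannot deliver it; Gu's refinement uses the $l-k$ extra columns of $\Omega$ to push the leakage index from $k+1$ to $l-p+1$ without enlarging the block whose pseudoinverse norm enters, and this is not a corollary of your weaker estimate. You defer that step to the reference, which is reasonable, but it means the proposal does not stand on its own as a proof of the lemma as stated. A mitigating observation: every theorem in this paper invokes the lemma only at $p=l_{1}-k$ --- in Theorem \ref{thm:matrix-error1} the constant $\mathcal{C}_{\Delta}$ contains $\sqrt{n-k}$, which is $\sqrt{n-l+p}$ specialized to $p=l-k$, and the singular-value index that appears is $k+1$ --- so the case you do prove is precisely the one the paper actually uses.
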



\begin{lem}\cite{Guming2015Subspaceiter}\label{lem:bound-Omega}
Under the same assumptions of Lemma \ref{lem:proj-error}, for any $0<\Delta\ll1$,  we have
\begin{equation*}
\|\hat{\Omega}_{2}\|_{2}\|\hat{\Omega}_{1}^{\dag}\|_{2}\leq\mathcal{C}_{\Delta}
\end{equation*}
with probability not less than $1-\Delta$, where $\mathcal{C}_{\Delta}=\frac{e\sqrt{l}}{p+1}\left(\frac{2}{\Delta}\right)^{\frac{1}{p+1}}\left(\sqrt{n-l+p}+\sqrt{l}+\sqrt{2\log\frac{2}{\Delta}}\right)$.
\end{lem}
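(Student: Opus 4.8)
The plan is to reduce the claim to two standard nonasymptotic tail estimates for the extreme singular values of a Gaussian matrix, after first using rotational invariance to simplify the randomness. Since $V_n$ in (\ref{a:svd}) is a fixed orthogonal matrix determined by $A$ and $\Omega$ is a standard Gaussian matrix, the rotated matrix $V_n^{T}\Omega$ is again an $n\times l$ standard Gaussian matrix; hence, after the partition (\ref{p:vo}), the blocks $\hat\Omega_1\in\mathbb{R}^{(l-p)\times l}$ and $\hat\Omega_2\in\mathbb{R}^{(n-l+p)\times l}$ are independent standard Gaussian matrices. In particular $\hat\Omega_1$ has full row rank almost surely, so $\hat\Omega_1^{\dag}$ is a right inverse with $\|\hat\Omega_1^{\dag}\|_2=1/\sigma_{l-p}(\hat\Omega_1)$, and $\|\hat\Omega_1^{\dag}\|_2$ and $\|\hat\Omega_2\|_2$ therefore depend on disjoint, independent pieces of the randomness.

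For the first factor, I would invoke the tail bound for the pseudoinverse of a Gaussian matrix (Chen--Dongarra; see also Halko--Martinsson--Tropp): if $G\in\mathbb{R}^{c\times d}$ has i.i.d.\ standard normal entries with $d-c\ge1$, then $\mathbb{P}\big(\|G^{\dag}\|_2>\frac{e\sqrt{d}}{d-c+1}\,s\big)\le s^{-(d-c+1)}$ for every $s\ge1$. Applying this to $\hat\Omega_1$ (so $c=l-p$, $d=l$, $d-c+1=p+1$) with $s=(2/\Delta)^{1/(p+1)}$ gives $\|\hat\Omega_1^{\dag}\|_2\le\frac{e\sqrt{l}}{p+1}(2/\Delta)^{1/(p+1)}$ with probability at least $1-\Delta/2$. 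For the second factor, I would use the Davidson--Szarek bound for the largest singular value: if $G$ is an $a\times b$ standard Gaussian matrix, then $\mathbb{P}(\|G\|_2>\sqrt{a}+\sqrt{b}+t)\le e^{-t^2/2}$ for every $t\ge0$. Applying this to $\hat\Omega_2$ (so $a=n-l+p$, $b=l$) with $t=\sqrt{2\log(2/\Delta)}$ gives $\|\hat\Omega_2\|_2\le\sqrt{n-l+p}+\sqrt{l}+\sqrt{2\log(2/\Delta)}$ with probability at least $1-\Delta/2$.

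Finally, a union bound over the two failure events (each of probability at most $\Delta/2$) shows that both inequalities hold simultaneously with probability at least $1-\Delta$; multiplying them on that event yields precisely $\|\hat\Omega_2\|_2\|\hat\Omega_1^{\dag}\|_2\le\mathcal{C}_{\Delta}$. There is little genuine difficulty here beyond bookkeeping: the substantive points are verifying the rotational-invariance/independence step (that $V_n^{T}\Omega$ is again standard Gaussian and that the two blocks split off as independent Gaussians) and quoting the two Gaussian tail inequalities with exactly the constants that reproduce $\mathcal{C}_{\Delta}$. The parameter choices $s=(2/\Delta)^{1/(p+1)}$ and $t=\sqrt{2\log(2/\Delta)}$, together with the split of the total failure probability as $\Delta/2+\Delta/2$, are not a clever device but are simply forced by requiring the resulting estimate to match the stated $\mathcal{C}_{\Delta}$ term by term.
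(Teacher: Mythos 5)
The paper does not prove this lemma — it is quoted verbatim from Gu's 2015 subspace-iteration paper. Your reconstruction is correct and is, as far as I can tell, exactly the route Gu takes: rotational invariance to reduce to independent Gaussian blocks, the Chen--Dongarra tail bound for the pseudoinverse, the Davidson--Szarek concentration bound for the operator norm, a split of the failure probability into $\Delta/2+\Delta/2$, and a union bound. The choices $s=(2/\Delta)^{1/(p+1)}$ and $t=\sqrt{2\log(2/\Delta)}$ are indeed forced by matching the stated $\mathcal{C}_{\Delta}$.

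One small caveat worth recording: you take $\hat\Omega_1\in\mathbb{R}^{(l-p)\times l}$ and $\hat\Omega_2\in\mathbb{R}^{(n-l+p)\times l}$, whereas the partition declared in Lemma~\ref{lem:proj-error} (which this lemma inherits ``under the same assumptions'') is $\hat\Omega_1\in\mathbb{R}^{k\times l}$, $\hat\Omega_2\in\mathbb{R}^{(n-k)\times l}$. Your block sizes are the ones that actually produce the exponents $p+1$ and the term $\sqrt{n-l+p}$ in $\mathcal{C}_{\Delta}$, so the proof you give matches the constant in the statement, not the partition in the hypotheses. Since $0\le p\le l-k$ implies $l-p\ge k$, the two partitions coincide precisely when $p=l-k$; that is the only regime in which the paper ever invokes the lemma (it always sets $l_1=k+p$), so nothing breaks downstream, but it would be cleaner to flag the discrepancy. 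Everything else — independence of the two blocks, full row rank a.s., $\|\hat\Omega_1^{\dag}\|_2=1/\sigma_{\min}(\hat\Omega_1)$, and the two quoted tail inequalities with their constants — is accurate.
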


The following lemmas provide some inequalities about singular value, which are helpful to prove the main theorem related to the singular value approximation error of the proposed algorithms.

\begin{lem}(\cite[Theorem 3.3.16]{Horn1991Topics})\label{lem:topic-Matrix}
Let $G,\Delta G\in\mathbb{R}^{m\times n}$. Then  we have
\begin{equation*}
|\sigma_{i}(G+\Delta G)-\sigma_{i}(G)|\leq\sigma_{1}(\Delta G)
\end{equation*}
for $i=1,\ldots,q=\min\{m,n\}$.
\end{lem}

\begin{lem}\cite{Goulb2013MC}\label{lem:singular-Goulb}
Let $A\in\mathbb{R}^{m\times n}$. Suppose $Q$ is a matrix with orthonormal columns.  Then
\begin{equation*}
\sigma_{j}(A)\geq\sigma_{j}(Q^{T}A)
\end{equation*}
for $j=1,\ldots, q=\min\{m,n\}$.
\end{lem}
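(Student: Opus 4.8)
The plan is to derive the inequality directly from the Courant--Fischer min--max characterization of singular values. Recall that for any $M\in\mathbb{R}^{p\times n}$,
\begin{equation*}
\sigma_{j}(M)=\max_{\dim(\mathcal{S})=j}\ \min_{0\neq x\in\mathcal{S}}\frac{\|Mx\|_{2}}{\|x\|_{2}},\qquad j=1,\ldots,\min\{p,n\},
\end{equation*}
where $\mathcal{S}$ ranges over all $j$-dimensional subspaces of $\mathbb{R}^{n}$, and where by convention $\sigma_{j}(M)=0$ once $j$ exceeds $\min\{p,n\}$.

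First I would note that, since $Q$ has orthonormal columns, $Q^{T}Q=I$ and hence $\|Q^{T}\|_{2}=\|Q\|_{2}=1$. Therefore $\|Q^{T}Ax\|_{2}\leq\|Q^{T}\|_{2}\,\|Ax\|_{2}=\|Ax\|_{2}$ for every $x\in\mathbb{R}^{n}$. Fixing a $j$-dimensional subspace $\mathcal{S}\subseteq\mathbb{R}^{n}$ and minimizing over $0\neq x\in\mathcal{S}$, this pointwise bound gives
\begin{equation*}
\min_{0\neq x\in\mathcal{S}}\frac{\|Q^{T}Ax\|_{2}}{\|x\|_{2}}\leq\min_{0\neq x\in\mathcal{S}}\frac{\|Ax\|_{2}}{\|x\|_{2}}.
\end{equation*}
Taking the maximum over all $j$-dimensional $\mathcal{S}$ and applying the min--max formula to $Q^{T}A$ and to $A$ yields $\sigma_{j}(Q^{T}A)\leq\sigma_{j}(A)$ for $j=1,\ldots,q$.

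An equivalent route, if one prefers to avoid the variational characterization, is through the Loewner order: since $QQ^{T}$ is an orthogonal projector, $I-QQ^{T}\succeq0$, so that $A^{T}A-(Q^{T}A)^{T}(Q^{T}A)=A^{T}(I-QQ^{T})A\succeq0$; the monotonicity of the eigenvalues of symmetric matrices then gives $\lambda_{j}\!\big((Q^{T}A)^{T}(Q^{T}A)\big)\leq\lambda_{j}(A^{T}A)$, and taking square roots recovers the claim. In both approaches the argument is elementary; the only step that warrants a little care is the dimension bookkeeping, i.e.\ fixing the convention that a singular value indexed beyond the smaller matrix dimension is zero, so that the stated inequality is valid over the full range $j=1,\ldots,q$. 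I do not expect any serious difficulty beyond this.
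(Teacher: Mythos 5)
The paper cites this lemma from Golub and Van Loan's \emph{Matrix Computations} and supplies no proof of its own, so there is nothing internal to compare against. Your argument is correct: the Courant--Fischer route with the pointwise bound $\|Q^{T}Ax\|_{2}\leq\|Ax\|_{2}$ is exactly the standard proof, and the alternative via $A^{T}(I-QQ^{T})A\succeq 0$ together with eigenvalue monotonicity of symmetric matrices is an equally valid and common variant. Your remark about the indexing convention (singular values past the smaller dimension being zero) is the right bookkeeping point to flag when $Q$ has fewer than $m$ columns, and the min--max formula handles it automatically since the minimum vanishes on any $j$-dimensional subspace once $j$ exceeds $\operatorname{rank}(Q^{T}A)$.
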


\begin{lem}\cite{Guming2015Subspaceiter}\label{lem:singular-guming}
Let $A\in\mathbb{R}^{m\times n}$ admit the  SVD  as in \eqref{a:svd}.  Let $\Omega\in\mathbb{R}^{n\times l}$ be a standard Gaussian matrix. Let $p$ be such that $0\leq p\leq l-k$ and partition $V_n^{T}\Omega=\left[
             \begin{array}{c}
               \hat{\Omega}_{1} \\
               \hat{\Omega}_{2} \\
             \end{array}
             \right]$.
Let $B=Q^{T}A$ with $Q$ being a matrix with orthonormal columns. If the matrix $\hat{\Omega}_{1}$ has full row rank, then
\begin{equation*}
\sigma_{j}(Q^{T}A)=\sigma_{j}(B)\geq\frac{\sigma_{j}(A)}{\sqrt{1+\|\hat{\Omega}_{2}\|_{2}^{2}\|\hat{\Omega_{1}^{\dag}}\|_{2}^{2}\left(\frac{\sigma_{l-p+1}(A)}{\sigma_{j}(A)}\right)^{2}}}
\end{equation*}
for $j=1,\ldots,q=\min\{m,n\}$.
\end{lem}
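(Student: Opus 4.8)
The plan is to pass to squared singular values and use $\sigma_j(B)^2=\lambda_j(BB^\T)$ together with $BB^\T=Q^\T AA^\T Q$; here $Q$ is the orthonormal factor of $A\Omega=QR$ as in Lemma~\ref{lem:proj-error}, so that $\range(Q)=\range(A\Omega)$. Writing the SVD $A=U_m\Sigma V_n^\T$ and setting $G:=Q^\T U_m$ (so $GG^\T=Q^\T Q=I$ and $BB^\T=G\Sigma\Sigma^\T G^\T$), I would first note $\range(G^\T)=U_m^\T\range(Q)=\range(\Sigma V_n^\T\Omega)$ and invoke Courant--Fischer for $BB^\T$ together with the norm-preserving substitution $g=G^\T y$: this shows $\sigma_j(B)^2$ is the maximum over $j$-dimensional $\mathcal G\subseteq\range(\Sigma V_n^\T\Omega)$ of $\min_{0\ne g\in\mathcal G}\,(g^\T\Sigma\Sigma^\T g)/(g^\T g)$. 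Assuming $A\Omega$ has full column rank $l$ (under the standing hypothesis that Gaussian matrices have full rank this holds whenever $\sigma_l(A)>0$; the opposite case is trivial), $\zeta\mapsto\Sigma\zeta$ is an isomorphism of $\range(V_n^\T\Omega)$ onto $\range(\Sigma V_n^\T\Omega)$, so substituting $g=\Sigma\zeta$ converts the above into
\BE\label{eq:var}
\sigma_j(B)^2=\max_{\substack{\mathcal Z\subseteq\range(V_n^\T\Omega)\\ \dim\mathcal Z=j}}\ \min_{0\ne\zeta\in\mathcal Z}\ \frac{\sum_i\sigma_i^4(A)\,\zeta_i^2}{\sum_i\sigma_i^2(A)\,\zeta_i^2},
\EE
where $\zeta_i$ denotes the component of $\zeta$ along the $i$-th right singular vector of $A$.

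Next I would lower-bound \eqref{eq:var} by exhibiting one favourable $j$-dimensional subspace. Partition $V_n^\T\Omega=\bigl[\begin{smallmatrix}\hat\Omega_1\\ \hat\Omega_2\end{smallmatrix}\bigr]$ with $\hat\Omega_1\in\R^{(l-p)\times l}$ the first $l-p$ rows; full row rank gives $\hat\Omega_1\hat\Omega_1^\dag=I_{l-p}$, so $\zeta(t):=V_n^\T\Omega\,\hat\Omega_1^\dag t=\bigl[\begin{smallmatrix}t\\ \hat\Omega_2\hat\Omega_1^\dag t\end{smallmatrix}\bigr]$ lies in $\range(V_n^\T\Omega)$ for every $t\in\R^{l-p}$. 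For $j\le l-p$ I would take $\mathcal Z=\{\zeta(t):t_{j+1}=\cdots=t_{l-p}=0\}$, which is $j$-dimensional. For $\zeta=\zeta(t)\in\mathcal Z$, writing $S:=\sum_{i=1}^j\sigma_i^2(A)t_i^2$, the numerator in \eqref{eq:var} is at least $\sum_{i=1}^j\sigma_i^4(A)t_i^2\ge\sigma_j^2(A)\,S$, while the denominator equals $S+\sum_{i>l-p}\sigma_i^2(A)\zeta_i^2\le S+\sigma_{l-p+1}^2(A)\,\|\hat\Omega_2\|_2^2\,\|\hat\Omega_1^\dag\|_2^2\,\|t\|_2^2\le\bigl(1+\|\hat\Omega_2\|_2^2\|\hat\Omega_1^\dag\|_2^2\,\sigma_{l-p+1}^2(A)/\sigma_j^2(A)\bigr)S$, the last step using $\|t\|_2^2\le S/\sigma_j^2(A)$ and that the trailing block of $\zeta$ meets only singular values $\le\sigma_{l-p+1}(A)$. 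Dividing and plugging into \eqref{eq:var} gives exactly $\sigma_j(B)\ge\sigma_j(A)\big/\sqrt{1+\|\hat\Omega_2\|_2^2\|\hat\Omega_1^\dag\|_2^2(\sigma_{l-p+1}(A)/\sigma_j(A))^2}$; the companion inequality $\sigma_j(B)\le\sigma_j(A)$ is Lemma~\ref{lem:singular-Goulb}.

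The hard part is choosing the right variational principle at the outset. The naive estimate $\sigma_j(B)\ge\min\{\|Bv\|_2/\|v\|_2:0\ne v\in\mathcal V\}$ over $j$-dimensional $\mathcal V\subseteq\range(\Omega)$, which collapses to $\min_\zeta\|\Sigma\zeta\|_2/\|\zeta\|_2$ over $j$-dimensional $\mathcal Z\subseteq\range(V_n^\T\Omega)$, only yields $\sigma_j(B)\ge\sigma_j(A)/\sqrt{1+\|\hat\Omega_2\|_2^2\|\hat\Omega_1^\dag\|_2^2}$ and loses the factor $(\sigma_{l-p+1}(A)/\sigma_j(A))^2$ — precisely the sharpening that oversampling $p$ is meant to deliver and that is needed for the later error bounds. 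It is the $BB^\T$-induced quotient \eqref{eq:var}, pairing $\sigma_i^4$ in the numerator against $\sigma_i^2$ in the denominator, that lets the trailing directions be damped twice: once explicitly by $\sigma_{l-p+1}(A)$, and once through $\|t\|_2^2\le S/\sigma_j^2(A)$. Everything else should be routine: the reduction to the full-column-rank case, checking that $\mathcal Z$ is genuinely $j$-dimensional, and noting that the construction controls $\sigma_j(B)$ for $j\le l-p$, hence in particular for every $j\le k$, the range that matters downstream. (Equivalently, one may keep the index-$k$ partition of Lemma~\ref{lem:proj-error} and first annihilate the coordinates $k+1,\dots,l-p$ of $\zeta$ using the $(l-k)$-dimensional solution set of $\hat\Omega_1 y=t$, arriving at the same estimate.)
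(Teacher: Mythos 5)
The paper does not prove this lemma; it is quoted verbatim from Gu (2015), so there is no in-paper proof to match against. On its own merits your argument is sound and, as far as I can tell, it reconstructs exactly the standard Gu-style proof: pass to the symmetric form $BB^{\T}=G\Sigma\Sigma^{\T}G^{\T}$ with $G=Q^{\T}U_m$, characterize $\sigma_j(B)^2$ by Courant--Fischer over $j$-dimensional subspaces of $\range(G^{\T})=\range(\Sigma V_n^{\T}\Omega)$, reparametrize by $g=\Sigma\zeta$, and lower-bound the resulting Rayleigh quotient $\bigl(\sum_i\sigma_i^4\zeta_i^2\bigr)/\bigl(\sum_i\sigma_i^2\zeta_i^2\bigr)$ on the test subspace $\{\zeta(t)=V_n^{\T}\Omega\,\hat\Omega_1^{\dag}t:\ t_{j+1}=\cdots=t_{l-p}=0\}$. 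Every step checks out, and you correctly supply the hypothesis the paper leaves implicit, namely that $Q$ is the $Q$-factor of $A\Omega$ (for an arbitrary orthonormal $Q$ the stated lower bound would be false). Your observation about why the $BB^{\T}$ quotient is needed rather than the naive $\|Bv\|/\|v\|$ Rayleigh quotient is correct and is in fact the crux of the argument: I verified that the naive route indeed gives only $\sigma_j/\sqrt{1+\|\hat\Omega_2\|_2^2\|\hat\Omega_1^{\dag}\|_2^2}$ and loses the crucial $(\sigma_{l-p+1}/\sigma_j)^2$ damping.

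Two small things worth flagging. First, the lemma statement never pins down the row-dimension of $\hat\Omega_1$; you take $\hat\Omega_1\in\R^{(l-p)\times l}$, which is what makes $\sigma_{l-p+1}$ appear cleanly and coincides with the paper's actual invocation (where $p=l-k$, so the split is at row $k$). Your closing parenthetical about keeping the index-$k$ partition and ``arriving at the same estimate'' is more a plausibility sketch than a proof — if one literally keeps $\hat\Omega_1\in\R^{k\times l}$ the quantities $\|\hat\Omega_1^{\dag}\|_2$ and $\|\hat\Omega_2\|_2$ change, and showing the resulting bound is identical requires the extra bookkeeping you allude to; but since the main line of your proof does not rely on it, this is a remark rather than a gap. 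Second, your construction delivers the bound for $j\le l-p$, not for all $j\le q=\min\{m,n\}$ as the lemma optimistically states; for $j>l$ the left side is zero while the right side can be positive, so the lemma cannot hold for the full claimed range. The restriction $j\le l-p$ (hence $j\le k$) is exactly what the downstream theorems need, so this is a defect of the stated range, not of your proof.
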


\subsection{Error analysis of Algorithm 3}
\subsubsection{Matrix approximation error analysis}
The following theorem provides some bounds for the matrix approximation error of Algorithm 3 in the sense of $2$-norm and Frobenius norm.

\begin{thm}\label{thm:matrix-error1}
Let $A\in\mathbb{R}^{m\times n}$ and let $k$ be  target rank. Suppose  $l_{1}$ and $l_{2}$ are such that $l_{1}>(1+\frac{1}{\ln k})k$ and $l_{2}>(1+\frac{ 1}{\ln l_{1}})l_{1}$.  Let $Q$, $L$, and $P$ be generated by Algorithm 3. Then, for any $0<\Delta\ll1$, we have
\begin{equation}\label{equ:4.8}
\|A-QLP^{T}\|_{2}\leq2(1+\frac{a_{1}\sqrt{m}}{c_{1}\sqrt{l_{2}}})\sqrt{\frac{a_{1}^{2}n}{c_{1}^{2}l_{1}}+1}\cdot\sigma_{k+1}(A)
\end{equation}
with probability not less than $1-e^{-a_{2}m}-e^{-a_{2}n}-e^{-c_{2}l_{1}}-e^{-c_{2}l_{2}}$ and
\begin{equation}\label{equ:4.9}
\|A-QLP\|_{F}\leq\left(1+\frac{a_{1}\sqrt{m}}{c_{1}\sqrt{l_{2}}}\right)\sqrt{\frac{k\sigma_{1}^{2}(A)\sigma_{k+1}^{2}(A)\mathcal{C}_{\Delta}^{2}}{\sigma_{k+1}^{2}(A)\mathcal{C}_{\Delta}^{2}+\sigma_{1}^{2}(A)}+\sum\limits_{i=k+1}^{n}\sigma_{i}^{2}(A)}
\end{equation}
with probability not less than $1-e^{-a_{2}m}-e^{-c_{2}l_{2}}-\Delta$, where
\[\mathcal{C}_{\Delta}=\frac{e\sqrt{l_{1}}}{p+1}\left(\frac{2}{\Delta}\right)^{\frac{1}{p+1}}\left(\sqrt{n-k}+\sqrt{l_{1}}+\sqrt{2\log\frac{2}{\Delta}}\right).
\]
Here, $a_{1}, c_{1}$ and $c_{2}$ are defined as in Definition \ref{def:general-gaussian} and Remark \ref{rem:parameter0}--\ref{rem:parameter}.
\end{thm}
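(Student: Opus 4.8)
The plan is to exploit the fact that Step~5 of Algorithm~\ref{algor:SPRQLP} produces an \emph{exact} QLP decomposition of $B$, so that the output of the algorithm collapses to one explicit matrix. Since $[\widehat{Q},L,P]=\mathrm{QLP}(B)$ means $B=\widehat{Q}LP^{T}$ with $\widehat{Q}$ having orthonormal columns, we get $QLP^{T}=V\widehat{Q}LP^{T}=VB$, where $B=(\Omega_{2}V)^{\dagger}Y_{2}=(\Omega_{2}V)^{\dagger}\Omega_{2}A$. Moreover $V$ from Step~3 has orthonormal columns with $\range(V)=\range(A\Omega_{1})$, so $VV^{T}$ is the orthogonal projector onto $\range(A\Omega_{1})$; in particular $VV^{T}A\Omega_{1}=A\Omega_{1}$, hence $(I-VV^{T})A\Omega_{1}F=0$ for every $F$. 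Thus the theorem reduces to bounding $\|A-V(\Omega_{2}V)^{\dagger}\Omega_{2}A\|$ in the two norms.

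Next I would split the error through $VV^{T}A$, writing $\|A-VB\|\le\|(I-VV^{T})A\|+\|V^{T}A-B\|$ (using that $V$ has orthonormal columns). The second term is the price of replacing $B=V^{T}A$ in Algorithm~\ref{algor:RQLP} by the single-pass surrogate. Here I would use that $\Omega_{2}V\in\R^{l_{2}\times l_{1}}$ has full column rank $l_{1}$ (because $l_{2}\ge l_{1}$ and all Gaussian matrices are assumed full rank), so $(\Omega_{2}V)^{\dagger}(\Omega_{2}V)=I_{l_{1}}$. Substituting the matrix $F$ of Lemma~\ref{lem:AGF-F}, i.e.\ writing $A=A\Omega_{1}F+E$ with $\|E\|_{2}\le\sqrt{a_{1}^{2}n/(c_{1}^{2}l_{1})+1}\,\sigma_{k+1}(A)$, and using $\Omega_{2}A\Omega_{1}F=(\Omega_{2}V)(V^{T}A\Omega_{1}F)$, one gets $B=V^{T}A\Omega_{1}F+(\Omega_{2}V)^{\dagger}\Omega_{2}E$, whence $V^{T}A-B=V^{T}E-(\Omega_{2}V)^{\dagger}\Omega_{2}E$ and $\|V^{T}A-B\|_{2}\le(1+\|(\Omega_{2}V)^{\dagger}\|_{2}\|\Omega_{2}\|_{2})\|E\|_{2}$; together with $\|(I-VV^{T})A\|_{2}=\|(I-VV^{T})E\|_{2}\le\|E\|_{2}$ this extra $\|V^{T}E\|_{2}\le\|E\|_{2}$ is exactly what produces the constant $2$ in \eqref{equ:4.8}. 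For the Frobenius bound one instead keeps $A=VV^{T}A+(I-VV^{T})A$, which gives the cleaner identity $B-V^{T}A=(\Omega_{2}V)^{\dagger}\Omega_{2}(I-VV^{T})A$ and hence $\|V^{T}A-B\|_{F}\le\|(\Omega_{2}V)^{\dagger}\|_{2}\|\Omega_{2}\|_{2}\|(I-VV^{T})A\|_{F}$, i.e.\ the factor $1+a_{1}\sqrt{m}/(c_{1}\sqrt{l_{2}})$ of \eqref{equ:4.9}.

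Then I would estimate the random quantities. Since a standard Gaussian matrix lies in $\mathcal{A}(\cdot)$ by Remark~\ref{rem:parameter0}, condition~2) of Definition~\ref{def:general-gaussian} applied to $\Omega_{2}^{T}\in\mathcal{A}(m,l_{2},\mu,a_{1},a_{2})$ gives $\|\Omega_{2}\|_{2}=\|\Omega_{2}^{T}\|_{2}\le a_{1}\sqrt{m}$ with probability $\ge1-e^{-a_{2}m}$. Because $V$ has orthonormal columns and $\Omega_{2}$ is independent of $V$, the matrix $\Omega_{2}V$ is a standard Gaussian $l_{2}\times l_{1}$ matrix, so the hypothesis $l_{2}>(1+1/\ln l_{1})l_{1}$ lets Lemma~\ref{lem:smallest-bound} give $\sigma_{l_{1}}(\Omega_{2}V)\ge c_{1}\sqrt{l_{2}}$, i.e.\ $\|(\Omega_{2}V)^{\dagger}\|_{2}\le 1/(c_{1}\sqrt{l_{2}})$, with probability $\ge1-e^{-c_{2}l_{2}}$. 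For the projection error $\|(I-VV^{T})A\|=\|A-QQ^{T}A\|$ with $Q=V$ I would invoke Lemma~\ref{lem:AGF-F} for the $2$-norm bound $\sqrt{a_{1}^{2}n/(c_{1}^{2}l_{1})+1}\,\sigma_{k+1}(A)$, and Lemma~\ref{lem:proj-error} with $p=l_{1}-k$ (so that $\sigma_{l-p+1}(A)=\sigma_{k+1}(A)$) together with the estimate $\|\widehat{\Omega}_{2}\|_{2}\|\widehat{\Omega}_{1}^{\dagger}\|_{2}\le\mathcal{C}_{\Delta}$ of Lemma~\ref{lem:bound-Omega} for the Frobenius bound, noting that the fraction in Lemma~\ref{lem:proj-error} is increasing in that product. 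Collecting the failure probabilities by a union bound (the $\Omega_{1}$-events $e^{-c_{2}l_{1}}+e^{-a_{2}n}$ for the $2$-norm case, the $\Delta$-event for the Frobenius case, together with the two $\Omega_{2}$-events $e^{-a_{2}m}+e^{-c_{2}l_{2}}$) yields the two claimed probability estimates.

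I expect the main obstacle to be the bookkeeping around the oblique projector $V(\Omega_{2}V)^{\dagger}\Omega_{2}$: one must verify that $\Omega_{2}V$ has full column rank so that $(\Omega_{2}V)^{\dagger}(\Omega_{2}V)=I_{l_{1}}$ (this is where $l_{2}\ge l_{1}$ enters) and that $\Omega_{2}V$ still has the standard Gaussian law needed to apply Lemma~\ref{lem:smallest-bound}. Beyond that, everything reduces to the telescoping identity $QLP^{T}=VB$, two triangle inequalities, and direct application of Lemmas~\ref{lem:smallest-bound}, \ref{lem:AGF-F}, \ref{lem:proj-error} and \ref{lem:bound-Omega}.
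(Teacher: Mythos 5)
Your proposal is correct and follows essentially the same route as the paper: reduce to $QLP^{T}=VB$ with $B=(\Omega_{2}V)^{\dagger}\Omega_{2}A$, split via the triangle inequality through $VV^{T}A$, exploit $(\Omega_{2}V)^{\dagger}(\Omega_{2}V)=I$, and invoke Lemmas \ref{lem:smallest-bound}, \ref{lem:AGF-F}, \ref{lem:proj-error}, and \ref{lem:bound-Omega} with the same union bound over failure events. The only deviation is your $2$-norm bookkeeping (substituting $A=A\Omega_{1}F+E$ before bounding), which yields the slightly sharper intermediate constant $2+\|(\Omega_{2}V)^{\dagger}\|_{2}\|\Omega_{2}\|_{2}$ rather than the paper's factored $2\bigl(1+\|(\Omega_{2}V)^{\dagger}\|_{2}\|\Omega_{2}\|_{2}\bigr)$; this still implies \eqref{equ:4.8}, so the proof stands.
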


\begin{proof}
Firstly, we consider matrix approximation error in the $2$-norm. From Algorithm 3 we have
\begin{eqnarray} \label{equ:4.10}
\|A-QLP^{T}\|_{2}&=&\|A-VB\|_{2}=\|A-V(\Omega_{2}V)^{\dag}\Omega_{2}A\|_{2} \nonumber\\
&\leq&\|A-VV^{T}A\|_{2}+\|VV^{T}A-V(\Omega_{2}V)^{\dag}\Omega_{2}A\|_{2}.
\end{eqnarray}
The second term  of the right hand side of \eqref{equ:4.10} is reduced to
\begin{eqnarray}
\|VV^{T}A-V(\Omega_{2}V)^{\dag}\Omega_{2}A\|_{2}&=&\|V(\Omega_{2}V)^{\dag}\Omega_{2}VV^{T}A-V(\Omega_{2}V)^{\dag}\Omega_{2}A\|_{2}\nonumber\\
&\leq&\|V(\Omega_{2}V)^{\dag}\Omega_{2}\|_{2}\|A-VV^{T}A\|_{2},
\end{eqnarray}
where the first equality follows from the fact that $\Omega_{2}V$ has full column rank and thus $(\Omega_{2}V)^{\dag}\Omega_{2}V=I$ since $\Omega_{2}V\in\mathbb{R}^{l_{2}\times l_{1}}$$(l_{2}\geq l_{1})$ is a Gaussian random matrix. Thus,
\begin{equation}\label{equ:4.11}
\|A-QLP^{T}\|_{2}\leq(1+\|V(\Omega_{2}V)^{\dag}\Omega_{2}\|_{2})\|A-VV^{T}A\|_{2}.
\end{equation}
For the first term of the right hand side of \eqref{equ:4.10}, by using the triangular inequality, there exists a matrix $F\in\mathbb{R}^{l_{1}\times n}$ such that
\begin{equation}\label{equ:4.12}
\|A-VV^{T}A\|_{2}\leq\|VV^{T}A-VV^{T}A\Omega_{1}F\|_{2}+\|VV^{T}A\Omega_{1}F-A\Omega_{1}F\|_{2}+\|A\Omega_{1}F-A\|_{2}.
\end{equation}
For the first term of the right hand side of  (\ref{equ:4.12}), we have
\BE\label{412:1t}
\|VV^{T}A-VV^{T}A\Omega_{1}F\|_{2}\leq\|VV^{T}\|_{2}\|A\Omega_{1}F-A\|_{2}=\|A\Omega_{1}F-A\|_{2}.
\EE
For the second term of the right hand side of  (\ref{equ:4.12}), we have
\BE\label{412:2t}
\|VV^{T}A\Omega_{1}F-A\Omega_{1}F\|_{2}\leq\|VV^{T}A\Omega_{1}-A\Omega_{1}\|_{2}\|F\|_{2}
=\|VV^{T}VR-VR\|_{2}\|F\|_{2}=0.
\EE
By substituting \eqref{412:1t} and \eqref{412:2t} into (\ref{equ:4.12}), we obtain $$\|A-VV^{T}A\|_{2}\leq2\|A\Omega_{1}F-F\|_{2}.$$
By hypothesis, $l_{1}>\left(1+\frac{1}{\ln k}\right)k$. Using Lemma \ref{lem:AGF-F} we have
\begin{equation}\label{equ:error1}
\|A-VV^{T}A\|_{2}\leq2\sqrt{\frac{a_{1}^{2}n}{c_{1}^{2}l_{1}}+1}\cdot\sigma_{k+1}(A)
\end{equation}
with probability not less than $1-e^{-c_{2}l_{1}}-e^{-a_{2}n}$. Furthermore,
\BE\label{eq:vo2}
\|V(\Omega_{2}V)^{\dag}\Omega_{2}\|_{2}=\|(\Omega_{2}V)^{\dag}\Omega_{2}\|_{2}\leq\|(\Omega_{2}V)^{\dag}\|_{2}\|\Omega_{2}\|_{2}.
\EE
We already know that $\Omega_{2}V\in\mathbb{R}^{l_{2}\times l_{1}}$ is a Gaussian random matrix.
By hypothesis, $l_{2}>\left(1+\frac{1}{\ln l_{1}}\right)l_{1}$.  Thus, by Lemma \ref{lem:smallest-bound}  we have
\begin{equation}\label{equ:error2}
\|(\Omega_{2}V)^{\dag}\|_{2}=\frac{1}{\sigma_{l_{1}}(\Omega_{2}V)}\leq\frac{1}{c_{1}\sqrt{l_{2}}}
\end{equation}
with probability not less than $1-e^{-c_{2}l_{2}}$. By Definition \ref{def:general-gaussian}, we have
\begin{equation}\label{equ:error3}
\|\Omega_{2}\|_{2}\leq a_{1}\sqrt{m}
\end{equation}
with probability not less than $1-e^{-a_{2}m}$. Substituting (\ref{equ:error2}) and (\ref{equ:error3}) into
\eqref{eq:vo2} yields
\BE\label{eq:vo2-bd}
\|V(\Omega_{2}V)^{\dag}\Omega_{2}\|_{2}\leq \frac{a_{1}\sqrt{m}}{c_{1}\sqrt{l_{2}}}
\EE
with probability not less than $1-e^{-a_{2}m}-e^{-c_{2}l_{2}}$.
Plugging (\ref{equ:error1}) and \eqref{eq:vo2-bd} into (\ref{equ:4.11}) gives rise to
\begin{equation*}
\|A-QLP^{T}\|_{2}\leq2\left(1+\frac{a_{1}\sqrt{m}}{c_{1}\sqrt{l_{2}}}\right)\sqrt{\frac{a_{1}^{2}n}{c_{1}^{2}l_{1}}+1}\cdot\sigma_{k+1}(A)
\end{equation*}
with probability not less than $1-e^{-a_{2}m}-e^{-a_{2}n}-e^{-c_{2}l_{1}}-e^{-c_{2}l_{2}}$.

Next, we consider matrix approximation error in the Frobenius norm. By using the similar  error analysis in the $2$-norm we have
\begin{eqnarray}\label{eq:vao2}
\|A-QLP^{T}\|_{F}&=&\|A-VB\|_{F}=\|A-V(\Omega_{2}V)^{\dag}\Omega_{2}A\|_{F}\nonumber\\
&\leq&\|A-VV^{T}A\|_{F}+\|VV^{T}A-V(\Omega_{2}V)^{\dag}\Omega_{2}A\|_{F}.
\end{eqnarray}
The second term of the right hand side of \eqref{eq:vao2} is reduced to
\begin{eqnarray*}
\|VV^{T}A-V(\Omega_{2}V)^{\dag}\Omega_{2}A\|_{F}&=&\|V(\Omega_{2}V)^{\dag}\Omega_{2}VV^{T}A-V(\Omega_{2}V)^{\dag}\Omega_{2}A\|_{F}\nonumber\\
&\leq&\|V(\Omega_{2}V)^{\dag}\Omega_{2}\|_{2}\|A-VV^{T}A\|_{F},
\end{eqnarray*}
where the first equality follows from the fact that the Gaussian matrix $\Omega_{2}V$ has full column rank and thus $(\Omega_{2}V)^{\dag}\Omega_{2}V=I$. Thus,
\begin{equation}
\|A-QLP^{T}\|_{F}\leq (1+\|V(\Omega_{2}V)^{\dag}\Omega_{2}\|_{2}) \|A-VV^{T}A\|_{F}.
\end{equation}
Let $V_n$ be defined by \eqref{a:svd} and
\BE\label{p:vo}
V_n^{T}\Omega_1=\left[
                                           \begin{array}{c}
                                             \hat{\Omega}_{1} \\
                                             \hat{\Omega}_{2} \\
                                           \end{array}
                                         \right],\quad \hat{\Omega}_{1}\in\mathbb{R}^{k\times l_1}.
\EE
By Lemma \ref{lem:proj-error}, we get
\begin{equation}\label{equ:4.18}
\|A-VV^{T}A\|_{F}\leq\sqrt{\frac{k\sigma_{1}^{2}(A)\sigma_{k+1}^{2}(A)\|\hat{\Omega}_{2}\|_{2}^{2}\|\hat{\Omega}_{1}^{\dag}\|_{2}^{2}}{\sigma_{k+1}^{2}(A)\|\hat{\Omega}_{2}\|_{2}^{2}\|\hat{\Omega}_{1}^{\dag}\|_{2}^{2}+\sigma_{1}^{2}(A)}+\sum\limits_{i=k+1}^{n}\sigma_{i}^{2}(A)}.
\end{equation}
From Lemma \ref{lem:bound-Omega} and  \eqref{eq:vo2-bd} we obtain, for any $0<\Delta\ll1$,
\begin{equation}\label{equ:qlp-est}
\|A-QLP^{T}\|_{F}\leq\left(1+\frac{a_{1}\sqrt{m}}{c_{1}\sqrt{l_{2}}}\right)\sqrt{\frac{k\sigma_{1}^{2}(A)\sigma_{k+1}^{2}(A)\mathcal{C}_{\Delta}^{2}}{\sigma_{k+1}^{2}(A)\mathcal{C}_{\Delta}^{2}+\sigma_{1}^{2}(A)}+\sum\limits_{i=k+1}^{n}\sigma_{i}^{2}(A)}.
\end{equation}
with probability not less than $1-e^{-a_{2}m}-e^{-c_{2}l_{2}}-\Delta$.
\end{proof}

\begin{rem}
In  Theorem \ref{thm:matrix-error1}, we require that $l_{1}>\left(1+\frac{1}{\ln k}\right)k$ and $l_{2}>\left(1+\frac{1}{\ln l_{1}}\right)l_{1}$. When $l_2\ge l_1\ge k$ with $l_1\approx k$ and  $l_2\approx l_1$, the similar bounds of matrix approximation error can be derived by using the results in \cite{Litvak2010smallest}.
\end{rem}

\begin{rem}\label{rem:bd-pro}
Using   \eqref{equ:4.4} in Lemma \ref{lem:proj-error}, Lemma \ref{lem:bound-Omega}, \eqref{equ:4.11}, and (\ref{eq:vo2-bd}), for $Q$, $L$, and $P$ generated by Algorithm 3, we have
\begin{equation}
\|A-QLP^{T}\|_{2}\leq\left(1+\frac{a_{1}\sqrt{m}}{c_{1}\sqrt{l_{2}}}\right)
\sqrt{\frac{k\sigma_{1}^{2}(A)\sigma_{k+1}^{2}(A)\mathcal{C}_{\Delta}^{2}}{\sigma_{k+1}^{2}(A)
\mathcal{C}_{\Delta}^{2}+\sigma_{1}^{2}(A)}+\sigma_{k+1}^{2}(A)}
\end{equation}
with probability not less than $1-e^{-a_{2}m}-e^{-c_{2}l_{2}}-\Delta$.
\end{rem}

\begin{rem}
In fact, in  Theorem \ref{thm:matrix-error1}, it is necessary to assume that the test matrix $\Omega_2$ has full row rank  as in  \cite[Theorem 9.1]{Halko2011SIAMreview}. In this case, we have $(\Omega_{2}V)^{\dag}\Omega_{2}V=I$ since  $\Omega_2^\dag\Omega_2=I$.
\end{rem}

\subsubsection{Singular value approximation error analysis} \label{sec412}
We first give some lemmas on the error bounds for singular values of the QLP decomposition for a matrix $A$.
As noted in \cite{Stewart1999QLP}, for the QLP decomposition, the pivoting in the first QR decomposition is essential while the  pivoting of the second QR decomposition is  only necessary to avoid ``certain contrived counterexamples".  Therefore, in order to simplify the analysis, we assume that there is no pivoting in the second QR decomposition of the QLP decomposition for the matrix $B$ generated by Algorithm 3.

The following lemma gives a bound for the maximum singular value approximation error.

\begin{lem}\cite{Huckaby2003QLP}\label{lem:l11-smallsingular}
Let $A\in\mathbb{R}^{m\times n}$ and $\sigma_{1}(A)>\sigma_{2}(A)$. Let $R_0$ be the $R$-factor in the pivoted QR factorization of $A$, $AP_{0}=Q_{0}R_{0}$ and let $L^T$ be the  $R$-factor in the unpivoted QR factorization of $(R_0)^T$, $R_{0}^{T}=Q_{1}L^{T}$. Partition $R_{0}$ and $L$ as
\[
R_{0}=\left[
                                                                                    \begin{array}{cc}
                                                                                      r_{11} & R_{12} \\
                                                                                      0 & R_{22} \\
                                                                                    \end{array}
                                                                                  \right]\quad\mbox{and}\quad L=\left[
                                                                                             \begin{array}{cc}
                                                                                               l_{11} & 0 \\
                                                                                               L_{21} & L_{22} \\
                                                                                             \end{array}
                                                                                           \right],
\]
where $r_{11}$, $l_{11}\in\R$. If $\|R_{22}\|_{2}\leq\sqrt{2(q-1)}\sigma_{2}(A)$ and $\frac{\|R_{22}\|_{2}}{|r_{11}|}<1$, then
\begin{equation*}
|l_{11}|^{-1}-\sigma_{1}^{-1}(A)\leq\frac{\sigma_{2}^{2}(A)}{\sigma_{1}^{3}(A)}\mathcal{O}\left(\frac{q^{\frac{5}{2}}\|R_{12}\|_{2}^{2}}{(1-\rho_{1}^{2})l_{11}^{2}}\right),
\end{equation*}
where $q=\min\{m,n\}$ and $\rho_1=\frac{\|L_{22}\|_{2}}{|l_{11}|}$.
\end{lem}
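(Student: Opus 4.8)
The plan is to reduce everything to the symmetric positive semidefinite cross-product matrix $N:=R_0R_0^{T}=LL^{T}$, whose leading diagonal entry is exactly $l_{11}^{2}$. First I would record the elementary consequences of $AP_0=Q_0R_0$ and $R_0^{T}=Q_1L^{T}$: since $Q_0$ and $Q_1$ are orthogonal, $\sigma_i(A)=\sigma_i(R_0)=\sigma_i(L)$ for every $i$, so $\sigma_1^{2}(A)=\lambda_1(N)$; and, because $L$ is lower triangular while $R_0$ is upper triangular, equating the $(1,1)$ entries of $LL^{T}=R_0R_0^{T}$ gives the exact identity $l_{11}^{2}=r_{11}^{2}+\|R_{12}\|_2^{2}\ (\ge r_{11}^{2}>0)$, while equating the remaining blocks gives $l_{11}L_{21}=R_{22}R_{12}^{T}$ together with $L_{21}L_{21}^{T}+L_{22}L_{22}^{T}=R_{22}R_{22}^{T}$; hence $\|L_{21}\|_2=\|R_{22}R_{12}^{T}\|_2/|l_{11}|$ and $\|R_{22}\|_2^{2}=\lambda_1(R_{22}R_{22}^{T})\le\|L_{21}\|_2^{2}+\|L_{22}\|_2^{2}=\|L_{21}\|_2^{2}+\rho_1^{2}l_{11}^{2}$.

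Next I would exploit the bordered structure of $N$: its $(1,1)$ entry is $l_{11}^{2}$, its trailing principal block is $D:=R_{22}R_{22}^{T}$ (so $\lambda_1(D)=\|R_{22}\|_2^{2}$), and its coupling column is $b:=R_{22}R_{12}^{T}$. The hypothesis $\|R_{22}\|_2/|r_{11}|<1$ together with $l_{11}^{2}\ge r_{11}^{2}$ yields $\lambda_1(N)\ge l_{11}^{2}\ge r_{11}^{2}>\|R_{22}\|_2^{2}=\lambda_1(D)$, so the dominant eigenvalue of $N$ is strictly separated from the spectrum of $D$; consequently a dominant eigenvector of $N$ has nonzero first entry, and normalising it to $(1,y^{T})^{T}$ produces the secular equation $\sigma_1^{2}(A)=l_{11}^{2}+b^{T}\bigl(\sigma_1^{2}(A)I-D\bigr)^{-1}b$. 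Since $\sigma_1^{2}(A)I-D\succ 0$ with $\|(\sigma_1^{2}(A)I-D)^{-1}\|_2=\bigl(\sigma_1^{2}(A)-\|R_{22}\|_2^{2}\bigr)^{-1}$, this delivers the key estimate
\[
0\le\sigma_1^{2}(A)-l_{11}^{2}=b^{T}\bigl(\sigma_1^{2}(A)I-D\bigr)^{-1}b\le\frac{\|R_{22}R_{12}^{T}\|_2^{2}}{\sigma_1^{2}(A)-\|R_{22}\|_2^{2}}.
\]

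Then I would bound the numerator and denominator separately. For the numerator, submultiplicativity and the hypothesis $\|R_{22}\|_2\le\sqrt{2(q-1)}\,\sigma_2(A)$ give $\|R_{22}R_{12}^{T}\|_2^{2}\le 2(q-1)\,\sigma_2^{2}(A)\,\|R_{12}\|_2^{2}$, which is exactly where the factor $\sigma_2^{2}(A)$ (and a power of $q$) enters. For the denominator I would combine $\|R_{22}\|_2^{2}\le\|L_{21}\|_2^{2}+\rho_1^{2}l_{11}^{2}$ from the first step with $\|L_{21}\|_2=\|R_{22}R_{12}^{T}\|_2/|l_{11}|\le\|R_{22}\|_2\|R_{12}\|_2/|l_{11}|\le\|R_{12}\|_2$ (the last inequality again from $\|R_{22}\|_2<|r_{11}|\le|l_{11}|$) to get $\sigma_1^{2}(A)-\|R_{22}\|_2^{2}\ge(1-\rho_1^{2})l_{11}^{2}-\|R_{12}\|_2^{2}$, which in the only regime where the estimate is non-vacuous — namely $\|R_{12}\|_2^{2}$ of lower order than $(1-\rho_1^{2})l_{11}^{2}$, the regime the $\mathcal{O}(\cdot)$ encodes — is bounded below by a constant multiple of $(1-\rho_1^{2})l_{11}^{2}$; this is the source of the $(1-\rho_1^{2})l_{11}^{2}$ in the denominator of the claimed bound. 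Feeding both estimates into the key inequality controls $\sigma_1^{2}(A)-l_{11}^{2}$, and finally I would pass to the assertion via the identity $|l_{11}|^{-1}-\sigma_1^{-1}(A)=\bigl(\sigma_1^{2}(A)-l_{11}^{2}\bigr)/\bigl(\sigma_1(A)\,|l_{11}|\,(\sigma_1(A)+|l_{11}|)\bigr)$ and $|l_{11}|\le\sigma_1(A)$; collecting the remaining standard worst-case column-pivoting estimates — which trade the surplus factors of $l_{11}^{2}$ for $\sigma_1^{2}(A)$ at the cost of factors polynomial in $q$ — produces the asserted $\frac{\sigma_2^{2}(A)}{\sigma_1^{3}(A)}\,\mathcal{O}\!\left(\frac{q^{5/2}\|R_{12}\|_2^{2}}{(1-\rho_1^{2})l_{11}^{2}}\right)$.

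I expect the main obstacle to be the lower bound on the eigenvalue gap $\sigma_1^{2}(A)-\|R_{22}\|_2^{2}$: it has to be expressed through $(1-\rho_1^{2})l_{11}^{2}$ without invoking any a priori spectral gap of $A$ itself, and this is precisely where the hypotheses are consumed — $\|R_{22}\|_2/|r_{11}|<1$ supplies the eigenvalue separation and the final chain of inequalities, $\|R_{22}\|_2\le\sqrt{2(q-1)}\sigma_2(A)$ injects $\sigma_2^{2}(A)$, and $\sigma_1(A)>\sigma_2(A)$ guarantees that the dominant singular value is simple so that ``$l_{11}$ approximates $\sigma_1(A)$'' is meaningful. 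The remaining effort is the routine bookkeeping of the polynomial-in-$q$ worst-case factors that the $\mathcal{O}$-notation absorbs.
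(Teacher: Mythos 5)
Note first that the paper itself does not prove this lemma: it is quoted from Huckaby and Chan \cite{Huckaby2003QLP} and cited as a known result, so there is no in-paper argument against which to check your work. Reviewing the reconstruction on its own merits, the algebraic skeleton is sound. The Gram-matrix identity $LL^{T}=R_0R_0^{T}$ follows from $R_0^{T}=Q_1L^{T}$ with $Q_1$ orthogonal; matching blocks gives exactly the three identities you record, in particular $l_{11}^{2}=r_{11}^{2}+\|R_{12}\|_2^{2}$. The hypothesis $\|R_{22}\|_2<|r_{11}|\le|l_{11}|$ then gives $\sigma_1^{2}(A)=\lambda_1(N)\ge l_{11}^{2}>\lambda_1(D)=\|R_{22}\|_2^{2}$, which both forces the first component of the dominant eigenvector of $N$ to be nonzero and makes $\sigma_1^{2}(A)I-D$ positive definite, so the secular identity $\sigma_1^{2}(A)-l_{11}^{2}=b^{T}(\sigma_1^{2}(A)I-D)^{-1}b$ with $b=R_{22}R_{12}^{T}$ is valid, and the operator-norm bound on its right-hand side is correct. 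The numerator estimate via submultiplicativity and $\|R_{22}\|_2\le\sqrt{2(q-1)}\sigma_2(A)$ is also fine.

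Two places need to be made explicit, and both are precisely where the $\mathcal{O}$ is doing the work. Your denominator bound $\sigma_1^{2}(A)-\|R_{22}\|_2^{2}\ge(1-\rho_1^{2})l_{11}^{2}-\|R_{12}\|_2^{2}$ is a constant multiple of $(1-\rho_1^{2})l_{11}^{2}$ only when $\|R_{12}\|_2^{2}/\big((1-\rho_1^{2})l_{11}^{2}\big)$ is small, i.e.\ exactly in the asymptotic regime the statement's $\mathcal{O}(\cdot)$ presupposes; that should be stated rather than left implicit. Likewise, the final conversion from $|l_{11}|^{-1}-\sigma_1^{-1}(A)=(\sigma_1^{2}(A)-l_{11}^{2})/\big(\sigma_1(A)|l_{11}|(\sigma_1(A)+|l_{11}|)\big)$ to a bound with $\sigma_1^{-3}(A)$ requires the column-pivoting inequality $|r_{11}|\ge\sigma_1(A)/\sqrt{q}$ (hence $|l_{11}|\ge\sigma_1(A)/\sqrt{q}$); you only gesture at ``standard worst-case column-pivoting estimates,'' but this inequality is load-bearing since it injects the additional power of $q$. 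With these two points spelled out the chain yields $\frac{\sigma_2^{2}(A)}{\sigma_1^{3}(A)}\mathcal{O}\!\big(q^{3/2}\|R_{12}\|_2^{2}/((1-\rho_1^{2})l_{11}^{2})\big)$, which is stronger than and therefore implies the stated $q^{5/2}$ bound. Compared to the original source, Huckaby and Chan argue via singular-value perturbation of the $2\times2$ block lower-triangular form (treating $L_{21}$ as a perturbation and invoking pivoting bounds to relate $R$-values to singular values), an argument that also covers the block version in Lemma \ref{lem:block-error}; your cross-product secular-equation route is more direct for the scalar $l_{11}$ case and dispenses with the SVD perturbation lemma, at the cost of not extending as transparently to the block case.
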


The following lemma  provides   some bounds for the interior singular value approximation errors.

\begin{lem}\cite{Huckaby2003QLP}\label{lem:block-error}
Let $A\in\mathbb{R}^{m\times n}$ and $\sigma_{k}(A)>\sigma_{k+1}(A)$. Let $R_0$ be the $R$-factor in the pivoted QR factorization of $A$, $AP_{0}=Q_{0}R_{0}$ and let $L^T$ be the  $R$-factor in the unpivoted QR factorization of $(R_0)^T$, $R_{0}^{T}=Q_{1}L^{T}$. Partition $R_{0}$ and $L$ as
\[
R_{0}=\left[
                                                             \begin{array}{cc}
                                                               R_{11} & R_{12} \\
                                                               0 & R_{22} \\
                                                             \end{array}
                                                             \right], L=\left[
                                                                        \begin{array}{cc}
                                                                          L_{11} & 0 \\
                                                                          L_{21} & L_{22} \\
                                                                        \end{array}
                                                                        \right],
\]
where $R_{11}$, $L_{11}\in\mathbb{R}^{k\times k}$. If $\|R_{22}\|_{2}\leq\sqrt{(k+1)(q-k)}\sigma_{k+1}(A)$, $\sigma_{k}(R_{11})\geq\frac{\sigma_{k}(A)}{\sqrt{k(q-k+1)}}$, and $\frac{\|R_{22}\|_{2}}{\sigma_{k}(R_{11})}<1$, then for $i=1,\ldots,n-k$,
\begin{equation}\label{equ:4.21}
\frac{\sigma_{j}(L_{22})-\sigma_{k+j}(A)}{\sigma_{k+j}(A)}\leq\left(\frac{\sigma_{k+1}(A)}{\sigma_{k}(A)}\right)^{2}\mathcal{O}\left(\frac{q^{\frac{5}{2}}\|R_{12}\|_{2}^{2}}{(1-\rho_{1}^{2})\sigma_{k}^{2}(R_{11})}\right)
\end{equation}
and for $j=1,\ldots,k$,
\begin{equation}\label{equ:4.22}
\frac{\sigma_{j}^{-1}(L_{11})-\sigma_{j}^{-1}(A)}{\sigma_{j}^{-1}(A)}\leq\left(\frac{\sigma_{k+1}(A)}{\sigma_{k}(A)}\right)^{2}\mathcal{O}\left(\frac{q^{\frac{5}{2}}\|R_{12}\|_{2}^{2}}{\left(1-\rho_{1}^{2}\right)\sigma_{k}^{2}(R_{11})}\right),
\end{equation}
where  $q=\min\{m,n\}$ and $\rho_{1}=\frac{\|L_{22}\|_{2}}{\sigma_{k}(L_{11})}$.
\end{lem}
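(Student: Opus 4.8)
The plan is to split the argument into two stages. Stage one uses the two QR steps to produce exact algebraic identities linking the blocks of $L$ to those of $R_0$; from them we bound the off-diagonal block $L_{21}$ and check that the separation $\rho_1<1$ is in force. Stage two feeds these bounds into a second-order block-triangular singular-value perturbation estimate, which shows that the singular values of $L$ — which are exactly those of $A$ — split into a leading cluster governed by $L_{11}$ and a trailing cluster governed by $L_{22}$, with deviations of order $\|L_{21}\|_2^2/(1-\rho_1^2)$. This is the blockwise version of the scalar analysis behind Lemma~\ref{lem:l11-smallsingular}, and the detailed perturbation bookkeeping is the one carried out in \cite{Huckaby2003QLP}.

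\emph{Stage one.} Since $AP_0=Q_0R_0$ with $Q_0,P_0$ orthogonal and $R_0^T=Q_1L^T$ is an unpivoted QR factorization, $R_0R_0^T=LL^T=Q_0^TAA^TQ_0$, so $\sigma_j(L)=\sigma_j(R_0)=\sigma_j(A)$ for all $j$. Expanding $R_0R_0^T=LL^T$ in the block partition of the statement gives
\[
L_{11}L_{11}^T=R_{11}R_{11}^T+R_{12}R_{12}^T,\qquad
L_{11}L_{21}^T=R_{12}R_{22}^T,\qquad
L_{21}L_{21}^T+L_{22}L_{22}^T=R_{22}R_{22}^T.
\]
The first identity together with $R_{12}R_{12}^T\succeq 0$ yields $\sigma_k(L_{11})\geq\sigma_k(R_{11})>0$, so $L_{11}$ is invertible and $\|L_{11}^{-1}\|_2\leq 1/\sigma_k(R_{11})$; the second identity then gives $L_{21}^T=L_{11}^{-1}R_{12}R_{22}^T$ and hence
\[
\|L_{21}\|_2\leq\frac{\|R_{12}\|_2\,\|R_{22}\|_2}{\sigma_k(R_{11})}.
\]
From the third identity $L_{22}L_{22}^T\preceq R_{22}R_{22}^T$, so $\|L_{22}\|_2\leq\|R_{22}\|_2$, whence $\rho_1=\|L_{22}\|_2/\sigma_k(L_{11})\leq\|R_{22}\|_2/\sigma_k(R_{11})<1$ by hypothesis.

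\emph{Stage two.} Write $L=D+N$, where $D=\diag(L_{11},L_{22})$ is the block-diagonal part and $N$ is the strictly lower block carrying only $L_{21}$. Since $\|L_{22}\|_2<\sigma_k(L_{11})$, the spectrum of $LL^T$ separates into the $k$ eigenvalues lying near $\{\sigma_i^2(L_{11})\}$ and the remaining ones lying near $\{\sigma_i^2(L_{22})\}$; the second-order perturbation estimate for this block-triangular structure gives
\[
\bigl|\sigma_j^2(A)-\sigma_j^2(L_{11})\bigr|=\mathcal{O}\Bigl(\frac{\|L_{21}\|_2^2}{1-\rho_1^2}\Bigr)\ (j\leq k),\qquad
\bigl|\sigma_j^2(L_{22})-\sigma_{k+j}^2(A)\bigr|=\mathcal{O}\Bigl(\frac{\|L_{21}\|_2^2}{1-\rho_1^2}\Bigr)\ (j\leq n-k).
\]
Passing from squared to plain and reciprocal singular values via identities such as
$\sigma_j^{-1}(L_{11})-\sigma_j^{-1}(A)=\dfrac{\sigma_j^2(A)-\sigma_j^2(L_{11})}{\sigma_j(L_{11})\,\sigma_j(A)\,(\sigma_j(L_{11})+\sigma_j(A))}$
and its analogue for $L_{22}$, dividing by $\sigma_j^{-1}(A)$ respectively $\sigma_{k+j}(A)$, and then inserting $\|L_{21}\|_2^2\leq\|R_{12}\|_2^2\|R_{22}\|_2^2/\sigma_k^2(R_{11})$ together with the hypotheses $\|R_{22}\|_2\leq\sqrt{(k+1)(q-k)}\,\sigma_{k+1}(A)$ and $\sigma_k(R_{11})\geq\sigma_k(A)/\sqrt{k(q-k+1)}$, one factors out $(\sigma_{k+1}(A)/\sigma_k(A))^2$ and absorbs the remaining dimension constants into the same $\mathcal{O}(q^{5/2})$ as in Lemma~\ref{lem:l11-smallsingular}, producing \eqref{equ:4.21}--\eqref{equ:4.22}.

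The main obstacle is stage two: one needs the decoupling error to be \emph{quadratic} in $\|L_{21}\|_2$ with the sharp $1/(1-\rho_1^2)$ factor, since a plain Weyl bound $|\sigma_j(L)-\sigma_j(D)|\leq\|N\|_2$ is only linear in $\|L_{21}\|_2$ and far too weak; one must also keep careful track of the condition-number-type quantities appearing in the second-order term so that they collapse into the stated power $q^{5/2}$ rather than a larger one. Stage one is routine matrix algebra.
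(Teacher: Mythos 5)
The paper gives no proof for this lemma; it is quoted verbatim from Huckaby and Chan \cite{Huckaby2003QLP}, so I can only assess your proposal on its own terms and against the external reference.

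\emph{Stage one} is correct and would appear in any proof. From $AP_0=Q_0R_0$ and $R_0^T=Q_1L^T$ with $Q_0,Q_1,P_0$ orthogonal, one indeed gets $R_0R_0^T=LL^T=Q_0^TAA^TQ_0$, and the three block identities
\[
L_{11}L_{11}^T=R_{11}R_{11}^T+R_{12}R_{12}^T,\quad L_{11}L_{21}^T=R_{12}R_{22}^T,\quad L_{21}L_{21}^T+L_{22}L_{22}^T=R_{22}R_{22}^T
\]
follow by direct block multiplication. The consequences you extract ($\sigma_k(L_{11})\geq\sigma_k(R_{11})$, $\|L_{22}\|_2\leq\|R_{22}\|_2$, $\|L_{21}\|_2\leq\|R_{12}\|_2\|R_{22}\|_2/\sigma_k(R_{11})$, $\rho_1<1$) are all sound.

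\emph{Stage two} is where the real content sits, and it is not supplied — you essentially invoke a "second-order block-triangular perturbation estimate" as a black box and acknowledge that the hard work is there. Two concrete issues. (i) The Hermitian matrix to which a quadratic-residual bound applies is $LL^T$, whose lower-right diagonal block is $L_{21}L_{21}^T+L_{22}L_{22}^T=R_{22}R_{22}^T$, not $L_{22}L_{22}^T$; your displayed estimate compares against $\diag(L_{11}L_{11}^T,L_{22}L_{22}^T)$ and hence silently absorbs an extra Weyl step $|\sigma_j^2(L_{22})-\sigma_j^2(R_{22})|\leq\|L_{21}\|_2^2$ that should be stated. (ii) Phrasing the decoupling error as $\mathcal{O}\bigl(\|L_{21}\|_2^2/(1-\rho_1^2)\bigr)$ loses information: the off-diagonal block of $LL^T$ is $E=L_{11}L_{21}^T$, and by your own second identity $\|E\|_2=\|R_{12}R_{22}^T\|_2\leq\|R_{12}\|_2\|R_{22}\|_2$, which is the quantity you actually need to drive the $\|E\|_2^2/\delta$ bound with $\delta=\sigma_k^2(L_{11})-\|R_{22}\|_2^2>0$; splitting $\|E\|_2\leq\|L_{11}\|_2\|L_{21}\|_2$ instead would introduce an unaccounted $\kappa(L_{11})$ factor. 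You flag the dimension bookkeeping yourself — getting $q^{5/2}$ rather than a larger power from the hypotheses on $\|R_{22}\|_2$ and $\sigma_k(R_{11})$ is nontrivial and not shown. Finally, to my understanding Huckaby and Chan's actual argument proceeds through a CS-decomposition / subspace-angle analysis of the two QR passes rather than through a direct Hermitian quadratic residual bound applied to $LL^T$; if your route can be completed it would be a somewhat different (and arguably more self-contained) argument, but at present the central inequality is assumed, not proved, so the proposal is a plausible outline rather than a proof.
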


Now we are ready to state our first theorem on singular value approximation error of  Algorithm 3.

Let $B$ be generated by  Algorithm 3. Then we can rewrite $B$  as
\[
B=(\Omega_{2}V)^{\dag}\Omega_{2}A-V^{T}A+V^{T}A=B-V^{T}A+V^{T}A.
\]
It follows from Lemma \ref{lem:topic-Matrix} that for any $1\leq j\leq l_1$, $$|\sigma_{j}(B)-\sigma_{j}(V^{T}A)|\leq\sigma_{1}(B-V^{T}A).$$ Thus, for any $1\leq j\leq l_1$,
\begin{equation}\label{equ:4.23}
\sigma_{j}(V^{T}A)-\sigma_{1}(B-V^{T}A)\leq\sigma_{j}(B)\leq\sigma_{j}(V^{T}A)+\sigma_{1}(B-V^{T}A).
\end{equation}
It is easy to see that $\sigma_{j}(B)=\sigma_{j}(L)$ for all $j=1,\ldots,l_1$. Thus, for any $1\leq j\leq l_1$,
\begin{equation}\label{equ:4.24}
\sigma_{j}(V^{T}A)-\sigma_{1}(B-V^{T}A)\leq\sigma_{j}(L)\leq\sigma_{j}(V^{T}A)+\sigma_{1}(B-V^{T}A).
\end{equation}

\begin{thm}\label{thm:singular-error1}
Let $A\approx QLP^{T}$ be rank-$k$ SPRQLP decomposition produced by Algorithm 3. For $B$ generated by  Algorithm 3, let $R_0$ be the $R$-factor in the pivoted QR factorization of $B$, $BP_{0}=Q_{0}R_{0}$ and let $L^T$ be the  $R$-factor in the unpivoted QR factorization of $(R_0)^T$, $R_{0}^{T}=Q_{1}L^{T}$.
Then, for any $0<\Delta\ll1$, we have $$\sigma_{j}(L)\leq\mathcal{C}+\sigma_{j}(A)$$ with probability not less than $1-e^{-a_{2}m}-e^{-a_{2}n}-e^{-c_{2}l_{1}}-e^{-c_{2}l_{2}}$ for all $j=1,\dots, k$ and $$\sigma_{j}(L)\geq\frac{\sigma_{j}(A)}{\rho}-\mathcal{C}$$ with probability not less than $1-e^{-a_{2}m}-e^{-a_{2}n}-e^{-c_{2}l_{1}}-e^{-c_{2}l_{2}}-\Delta$ for all $j=1,\dots, k$, where
\[
\mathcal{C}=2\frac{a_{1}\sqrt{m}}{c_{1}\sqrt{l_{2}}}\sqrt{\frac{a_{1}^{2}n}{c_{1}^{2}l_{1}}+1}\cdot\sigma_{k+1}(A),\quad
\rho=\sqrt{1+\mathcal{C}_{\Delta}^{2}\left(\frac{\sigma_{k+1}(A)}{\sigma_{j}(A)}\right)^{2}}
\]
with
\[
\mathcal{C}_{\Delta}=\frac{e\sqrt{l_{1}}}{p+1}\left(\frac{2}{\Delta}\right)^{\frac{1}{p+1}}
\left(\sqrt{n-k}+\sqrt{l_{1}}+\sqrt{2\log\frac{2}{\Delta}}\right),
\]
and the parameters $a_{1}, c_{1}$ and $c_{2}$ are defined as in Definition \ref{def:general-gaussian} and Remark \ref{rem:parameter0}--\ref{rem:parameter}.
\end{thm}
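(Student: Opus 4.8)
The plan is to combine the perturbation estimate \eqref{equ:4.24} with the QLP singular-value bounds of Lemmas \ref{lem:l11-smallsingular}--\ref{lem:block-error} and with the singular-value comparison inequalities of Lemmas \ref{lem:singular-Goulb} and \ref{lem:singular-guming}. The key observation is that the $L$-values of $B$ equal the singular values of $B$ only when the second QR factorization is performed without pivoting — which is exactly the hypothesis we have imposed — so by Lemmas \ref{lem:l11-smallsingular} and \ref{lem:block-error} the diagonal entries of $L$ (the $L$-values) are, up to a higher-order term that we absorb into $\mathcal{O}(\cdot)$, equal to $\sigma_j(B)=\sigma_j(L)$; this lets us work directly with $\sigma_j(L)$ and reduces the whole problem to estimating $\sigma_j(V^{T}A)$ and the perturbation $\sigma_1(B-V^{T}A)$.

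First I would bound the perturbation term. From Algorithm 3, $B-V^{T}A = (\Omega_2 V)^{\dag}\Omega_2 A - V^{T}A = (\Omega_2 V)^{\dag}\Omega_2(A-VV^{T}A)$, using $(\Omega_2 V)^{\dag}\Omega_2 V = I$. Hence
\[
\sigma_1(B-V^{T}A)=\|B-V^{T}A\|_2\le\|(\Omega_2 V)^{\dag}\Omega_2\|_2\,\|A-VV^{T}A\|_2 .
\]
Now I invoke \eqref{eq:vo2-bd}, which gives $\|(\Omega_2 V)^{\dag}\Omega_2\|_2\le \frac{a_1\sqrt{m}}{c_1\sqrt{l_2}}$ with probability at least $1-e^{-a_2 m}-e^{-c_2 l_2}$, and \eqref{equ:error1}, which gives $\|A-VV^{T}A\|_2\le 2\sqrt{\frac{a_1^2 n}{c_1^2 l_1}+1}\,\sigma_{k+1}(A)$ with probability at least $1-e^{-c_2 l_1}-e^{-a_2 n}$. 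Multiplying these yields $\sigma_1(B-V^{T}A)\le\mathcal{C}$, the quantity defined in the statement, on the intersection of these events, whose probability is at least $1-e^{-a_2 m}-e^{-a_2 n}-e^{-c_2 l_1}-e^{-c_2 l_2}$.

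Next I would handle the two remaining bounds via \eqref{equ:4.24}. For the upper bound $\sigma_j(L)\le\mathcal{C}+\sigma_j(A)$: from \eqref{equ:4.24}, $\sigma_j(L)\le\sigma_j(V^{T}A)+\sigma_1(B-V^{T}A)\le\sigma_j(V^{T}A)+\mathcal{C}$, and since $V$ has orthonormal columns, Lemma \ref{lem:singular-Goulb} gives $\sigma_j(V^{T}A)\le\sigma_j(A)$; combining proves the first claim on the event above. For the lower bound $\sigma_j(L)\ge\frac{\sigma_j(A)}{\rho}-\mathcal{C}$: from \eqref{equ:4.24}, $\sigma_j(L)\ge\sigma_j(V^{T}A)-\mathcal{C}$; then apply Lemma \ref{lem:singular-guming} with $Q=V$, $B=V^{T}A$, and the partition of $V_n^{T}\Omega_1$ from \eqref{p:vo}, which yields $\sigma_j(V^{T}A)\ge\sigma_j(A)/\sqrt{1+\|\hat\Omega_2\|_2^2\|\hat\Omega_1^{\dag}\|_2^2(\sigma_{l_1-p+1}(A)/\sigma_j(A))^2}$. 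Bounding $\sigma_{l_1-p+1}(A)\le\sigma_{k+1}(A)$ (since $l_1-p+1=k+1$) and invoking Lemma \ref{lem:bound-Omega} to replace $\|\hat\Omega_2\|_2\|\hat\Omega_1^{\dag}\|_2$ by $\mathcal{C}_\Delta$ with probability at least $1-\Delta$ gives $\sigma_j(V^{T}A)\ge\sigma_j(A)/\rho$, hence $\sigma_j(L)\ge\sigma_j(A)/\rho-\mathcal{C}$; the failure probability picks up the extra $\Delta$, matching the stated bound.

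The main obstacle — and the place where I would be most careful — is the bookkeeping around the $L$-values versus the singular values of $L$, and the precise probabilistic accounting. The statement is written in terms of $\sigma_j(L)$, and the step $\sigma_j(B)=\sigma_j(L)$ is trivial since $B$ and $L$ differ only by orthogonal factors; the subtlety is whether the hypotheses of Lemmas \ref{lem:l11-smallsingular}--\ref{lem:block-error} (e.g.\ $\|R_{22}\|_2$ small, $\sigma_k(R_{11})$ bounded below) are actually needed here, or whether they are only needed for the sharper $L$-value statements and can be dropped when one is content with $\sigma_j(L)$ alone — I would state explicitly that for the present theorem only the comparison inequalities and the perturbation bound are used, so those structural hypotheses on $R_0$ and $L$ play no role, and the QLP-specific lemmas are recalled only for context. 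The other delicate point is verifying that the events in \eqref{eq:vo2-bd}, \eqref{equ:error1}, and Lemma \ref{lem:bound-Omega} are the ones being intersected and that no event is double-counted, so that the union bound gives exactly the failure probabilities $e^{-a_2 m}+e^{-a_2 n}+e^{-c_2 l_1}+e^{-c_2 l_2}$ (and $+\,\Delta$ for the lower bound) claimed.
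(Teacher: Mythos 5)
Your proposal is correct and follows essentially the same route as the paper: use Weyl's inequality (\eqref{equ:4.24}) together with the factorization $B-V^{T}A=(\Omega_2 V)^{\dag}\Omega_2(A-VV^{T}A)$ and the bounds \eqref{equ:error1}, \eqref{eq:vo2-bd} to get $\sigma_1(B-V^{T}A)\le\mathcal{C}$; then invoke Lemma~\ref{lem:singular-Goulb} for the upper bound and Lemmas~\ref{lem:singular-guming}, \ref{lem:bound-Omega} (with $p=l_1-k$, so $\sigma_{l_1-p+1}(A)=\sigma_{k+1}(A)$ and $\sqrt{n-l_1+p}=\sqrt{n-k}$) for the lower bound. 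One small clean-up: your opening paragraph momentarily conflates the $L$-values $|l_{jj}|$ with the singular values $\sigma_j(L)$ and suggests Lemmas~\ref{lem:l11-smallsingular}--\ref{lem:block-error} are in play, but you correctly self-correct at the end — this theorem uses only $\sigma_j(B)=\sigma_j(L)$ (exact, since $B=Q_0LQ_1^{T}P_0^{T}$ with orthogonal factors) and the perturbation/comparison inequalities, so the QLP-specific structural hypotheses on $R_0$ and $L$ are not needed here; they enter only in Theorems~\ref{thm:singular-error2} and~\ref{thm:singular-error3}.
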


\begin{proof}
By Lemma \ref{lem:singular-Goulb} we obtain
\BE\label{eq:va-s}
\sigma_{j}(V^{T}A)\leq\sigma_{j}(A),\quad \forall 1\leq j\leq k.
\EE
Using  (\ref{equ:error1}) and \eqref{eq:vo2-bd} we have
\begin{eqnarray}\label{equ:4.25}
\sigma_{1}(B-V^{T}A)&=&\|(\Omega_{2}V)^{\dag}\Omega_{2}A-(\Omega_{2}V)^{\dag}\Omega_{2}VV^{T}A\|_{2}\nonumber\\
&\leq&\|(\Omega_{2}V)^{\dag}\Omega_{2}\|_{2}\|A-VV^{T}A\|_{2}\nonumber\\
&\leq&2\frac{a_{1}\sqrt{m}}{c_{1}\sqrt{l_{2}}}\sqrt{\frac{a_{1}^{2}n}{c_{1}^{2}l_{1}}+1}\cdot\sigma_{k+1}(A):=\mathcal{C}
\end{eqnarray}
with probability not less than $1-e^{-a_{2}m}-e^{-a_{2}n}-e^{-c_{2}l_{1}}-e^{-c_{2}l_{2}}$.  From  \eqref{equ:4.24}, \eqref{eq:va-s}, and \eqref{equ:4.25} we have
\[
\sigma_{j}(L)\leq\sigma_{j}(V^{T}A)+\sigma_{1}(B-V^{T}A)
\leq\mathcal{C}+\sigma_{j}(A)
\]
with probability not less than $1-e^{-a_{2}m}-e^{-a_{2}n}-e^{-c_{2}l_{1}}-e^{-c_{2}l_{2}}$ for all $j=1,\ldots, k$.

On the other hand, let $\hat{B}=V^{T}A$ and let the SVD of $A$ be given by \eqref{a:svd}. Denote
\[
V_n^T\Omega_{1}=\left[
                     \begin{array}{c}
                       \hat{\Omega}_{1} \\
                       \hat{\Omega}_{2} \\
                     \end{array}
                   \right],
\]
where $\hat{\Omega}_{1}\in\mathbb{R}^{k\times l_{1}}$, $\hat{\Omega}_{2}\in\mathbb{R}^{(n-k)\times l_{1}}$. According to Lemma \ref{lem:singular-guming}, we obtain $$\sigma_{j}(V^{T}A)=\sigma_{j}(\hat{B})=\sigma_{j}(\hat{B}_{k})\geq
\frac{\sigma_{j}(A)}{\sqrt{1+\|\hat{\Omega}_{2}\|_{2}^{2}\|\hat{\Omega}_{1}^{\dag}\|_{2}^{2}\left(\frac{\sigma_{k+1}(A)}
{\sigma_{j}(A)}\right)^{2}}},$$
for all $j=1,\ldots, k$. By Lemma \ref{lem:bound-Omega} we have, for any $0<\Delta\ll1$, $\|\hat{\Omega}_{2}\|_{2}^{2}\|\hat{\Omega}_{1}^{\dag}\|_{2}^{2}\leq\mathcal{C}_{\Delta}^{2}$ with probability not less than $1-\Delta$.
Then we have $\sigma_{j}(V^{T}A)\geq\frac{\sigma_{j}(A)}{\rho}$ with probability not less than $1-\Delta$. This, together with \eqref{equ:4.24}  and (\ref{equ:4.25}), yields
\[
\sigma_{j}(L)\geq\frac{\sigma_{j}(A)}{\rho}-\mathcal{C}
\]
with probability not less than $1-e^{-a_{2}m}-e^{-a_{2}n}-e^{-c_{2}l_{1}}-e^{-c_{2}l_{2}}-\Delta$  for all $j=1,\ldots, k$.
\end{proof}

We give the following theorem on the maximum singular value approximation
error of Algorithm 3.

\begin{thm}\label{thm:singular-error2}
Let $A\approx QLP^{T}$ be rank-$k$ SPRQLP decomposition produced by Algorithm 3.  For $B$ generated by  Algorithm 3, let $R_0$ be the $R$-factor in the pivoted QR factorization of $B$, $BP_{0}=Q_{0}R_{0}$ and let $L^T$ be the  $R$-factor in the unpivoted QR factorization of $(R_0)^T$, $R_{0}^{T}=Q_{1}L^{T}$.  Partition $R_{0}$ and $L$ as
\[
R_{0}=\left[
                                                                                    \begin{array}{cc}
                                                                                      r_{11} & R_{12} \\
                                                                                      0 & R_{22} \\
                                                                                    \end{array}
                                                                                  \right]\quad\mbox{and}\quad L=\left[
                                                                                             \begin{array}{cc}
                                                                                               l_{11} & 0 \\
                                                                                               L_{21} & L_{22} \\
                                                                                             \end{array}
                                                                                           \right],
\]
where $r_{11}$, $l_{11}\in\mathbb{R}$. Suppose $\sigma_{1}(B)>\sigma_{2}(B)$, $\|R_{22}\|_{2}\leq\sqrt{2(l_{1}-1)}\sigma_{2}(B)$, and $\frac{\|R_{22}\|_{2}}{|r_{11}|}<1$. Then for any given $0<\Delta\ll 1$,  we have
\begin{equation}
|l_{11}|^{-1}-\sigma_{1}^{-1}(A)\leq\sigma_{1}^{-1}(B)\left(1-\frac{1}{\rho}\cdot\frac{1}{1+\mathcal{C}\sigma_{1}^{-1}(B)}\right)+\frac{\sigma_{2}^{2}(B)}{\sigma_{1}^{3}(B)}\mathcal{O}\left(\frac{q^{\frac{5}{2}}\|R_{12}\|_{2}^{2}}{\left(1-\rho_{1}^{2}\right)l_{11}^{2}}\right)
\end{equation}
with probability not less than $1-e^{-a_{2}m}-e^{-a_{2}n}-e^{-c_{2}l_{1}}-e^{-c_{2}l_{2}}-\Delta$, where $\rho_{1}=\frac{\|L_{22}\|_{2}}{|l_{11}|}$,
\[
\mathcal{C}=2\frac{a_{1}\sqrt{m}}{c_{1}\sqrt{l_{2}}}\sqrt{\frac{a_{1}^{2}n}{c_{1}^{2}l_{1}}+1}\cdot\sigma_{k+1}(A)
\quad\mbox{and}\quad
\rho=\sqrt{1+\mathcal{C}_{\Delta}^{2}\left(\frac{\sigma_{k+1}(A)}{\sigma_{j}(A)}\right)^{2}},
\]
with
\[
\mathcal{C}_{\Delta}=\frac{e\sqrt{l_{1}}}{p+1}\left(\frac{2}{\Delta}\right)^{\frac{1}{p+1}}
\left(\sqrt{n-k}+\sqrt{l_{1}}+\sqrt{2\log\frac{2}{\Delta}}\right),
\]
and the parameters $a_{1}, c_{1}$ and $c_{2}$ are defined as in Definition \ref{def:general-gaussian} and Remark \ref{rem:parameter0}--\ref{rem:parameter}.
\end{thm}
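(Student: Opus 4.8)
The plan is to split the target quantity as
\[
|l_{11}|^{-1}-\sigma_1^{-1}(A)=\bigl(|l_{11}|^{-1}-\sigma_1^{-1}(B)\bigr)+\bigl(\sigma_1^{-1}(B)-\sigma_1^{-1}(A)\bigr),
\]
and to bound the two pieces by, respectively, the deterministic QLP perturbation estimate of Lemma \ref{lem:l11-smallsingular} applied to the sketch matrix $B$, and the probabilistic comparison between the singular values of $B$ and of $A$ already set up in \eqref{equ:4.23}--\eqref{equ:4.25} and in the proof of Theorem \ref{thm:singular-error1}.

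For the first piece, note that in Algorithm 3 the QLP decomposition is carried out on $B\in\mathbb{R}^{l_1\times n}$, so the relevant $q$ is $\min\{l_1,n\}=l_1$. The standing hypotheses $\sigma_1(B)>\sigma_2(B)$, $\|R_{22}\|_2\le\sqrt{2(l_1-1)}\,\sigma_2(B)$, and $\|R_{22}\|_2/|r_{11}|<1$ are then precisely those of Lemma \ref{lem:l11-smallsingular} with $A$ replaced by $B$, and that lemma gives directly
\[
|l_{11}|^{-1}-\sigma_1^{-1}(B)\le\frac{\sigma_2^2(B)}{\sigma_1^3(B)}\,\mathcal{O}\!\left(\frac{q^{5/2}\|R_{12}\|_2^2}{(1-\rho_1^2)\,l_{11}^2}\right),
\]
with $\rho_1=\|L_{22}\|_2/|l_{11}|$; this matches the second term in the asserted bound and involves no randomness.

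For the second piece I would reuse inequalities already proved. Taking $j=1$ in \eqref{equ:4.24}, using $\sigma_1(B-V^TA)\le\mathcal{C}$ from \eqref{equ:4.25}, and using the lower bound $\sigma_1(V^TA)\ge\sigma_1(A)/\rho$ obtained by combining Lemma \ref{lem:singular-guming} with Lemma \ref{lem:bound-Omega} exactly as in the second half of the proof of Theorem \ref{thm:singular-error1}, we get $\sigma_1(B)\ge\sigma_1(A)/\rho-\mathcal{C}$, i.e. $\sigma_1(B)+\mathcal{C}\ge\sigma_1(A)/\rho$, on an event of probability at least $1-e^{-a_2m}-e^{-a_2n}-e^{-c_2l_1}-e^{-c_2l_2}-\Delta$. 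Rearranging yields
\[
\sigma_1^{-1}(A)\ge\frac{1}{\rho\,(\sigma_1(B)+\mathcal{C})}=\sigma_1^{-1}(B)\cdot\frac{1}{\rho\,(1+\mathcal{C}\,\sigma_1^{-1}(B))},
\]
so that
\[
\sigma_1^{-1}(B)-\sigma_1^{-1}(A)\le\sigma_1^{-1}(B)\left(1-\frac{1}{\rho}\cdot\frac{1}{1+\mathcal{C}\,\sigma_1^{-1}(B)}\right),
\]
which is the first term in the asserted bound. Adding the two estimates and intersecting the success events finishes the proof.

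The only delicate point I anticipate is the probability bookkeeping together with the implicit requirement that the event $\sigma_1(B)+\mathcal{C}\ge\sigma_1(A)/\rho$ be nonvacuous, so that $\sigma_1^{-1}(A)$ is genuinely bounded below by the stated positive quantity; since $\mathcal{C}$ scales with $\sigma_{k+1}(A)$ while $\sigma_1(A)/\rho$ is of order $\sigma_1(A)$, this is harmless under a favorable spectral gap but deserves to be flagged. Everything past the initial decomposition is routine algebra.
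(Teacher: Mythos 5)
Your proof is correct and is essentially the argument the paper uses: the paper bounds $|l_{11}|^{-1}$ above via Lemma \ref{lem:l11-smallsingular} applied to $B$, subtracts $\sigma_1^{-1}(A)$, and then inserts the lower bound $\sigma_1^{-1}(A)\geq 1/(\rho\sigma_1(B)+\mathcal{C}\rho)$ coming from Theorem \ref{thm:singular-error1}; your two-term decomposition is just the same chain written in a slightly different order, and the algebraic simplification to $\sigma_1^{-1}(B)\bigl(1-\tfrac{1}{\rho}\cdot\tfrac{1}{1+\mathcal{C}\sigma_1^{-1}(B)}\bigr)$ is identical. The caveat you flag at the end is harmless here since $\sigma_1(B)+\mathcal{C}$ and $\sigma_1(A)/\rho$ are both positive, so taking reciprocals is always legitimate and the bound is never vacuous.
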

\begin{proof}
By Theorem \ref{thm:singular-error1} we have
\begin{equation}\label{equ:4.27}
\sigma_{1}(B)=\sigma_{1}(L)\geq\frac{\sigma_{1}(A)}{\rho}-\mathcal{C},\quad\mbox{i.e.,}\quad \frac{1}{\sigma_{1}(A)}\geq\frac{1}{\rho\sigma_{1}(B)+\mathcal{C}\rho}
\end{equation}
with probability not less than $1-e^{-a_{2}m}-e^{-a_{2}n}-e^{-c_{2}l_{1}}-e^{-c_{2}l_{2}}-\Delta$.
By  Lemma \ref{lem:l11-smallsingular} we obtain $$|l_{11}|^{-1}\leq\sigma_{1}^{-1}(B)+\frac{\sigma_{2}^{2}(B)}{\sigma_{1}^{3}(B)}\mathcal{O}\left(\frac{q^{\frac{5}{2}}\|R_{12}\|_{2}^{2}}{\left(1-\rho_{1}^{2}\right)l_{11}^{2}}\right).$$
Subtracting $\sigma_{1}^{-1}(A)$ from both the sides of the above inequality gives rise to
\begin{eqnarray}
|l_{11}|^{-1}-\sigma_{1}^{-1}(A)&\leq&\sigma_{1}^{-1}(B)-\frac{1}{\sigma_{1}(A)}+\frac{\sigma_{2}^{2}(B)}{\sigma_{1}^{3}(B)}\mathcal{O}\left(\frac{q^{\frac{5}{2}}\|R_{12}\|_{2}^{2}}{\left(1-\rho_{1}^{2}\right)l_{11}^{2}}\right)\nonumber\\
&\leq&\sigma_{1}^{-1}(B)-\frac{1}{\rho\sigma_{1}(B)+\mathcal{C}\rho}+\frac{\sigma_{2}^{2}(B)}{\sigma_{1}^{3}(B)}\mathcal{O}\left(\frac{q^{\frac{5}{2}}\|R_{12}\|_{2}^{2}}{\left(1-\rho_{1}^{2}\right)l_{11}^{2}}\right)\nonumber\\
&=&\sigma_{1}^{-1}(B)\left(1-\frac{1}{\rho}\cdot\frac{1}{1+\mathcal{C}\sigma_{1}^{-1}(B)}\right)+\frac{\sigma_{2}^{2}(B)}{\sigma_{1}^{3}(B)}\mathcal{O}\left(\frac{q^{\frac{5}{2}}\|R_{12}\|_{2}^{2}}{\left(1-\rho_{1}^{2}\right)l_{11}^{2}}\right)\nonumber
\end{eqnarray}
with probability not less than $1-e^{-a_{2}m}-e^{-a_{2}n}-e^{-c_{2}l_{1}}-e^{-c_{2}l_{2}}-\Delta$, where the second inequality follows from (\ref{equ:4.27}).
\end{proof}

On  the interior singular value approximation errors of  Algorithm 3, we have the following theorem.
\begin{thm}\label{thm:singular-error3}
Let $A\approx QLP^{T}$ be rank-$k$ SPRQLP decomposition produced by Algorithm 3. For $B$ generated by  Algorithm 3, let $R_0$ be the $R$-factor in the pivoted QR factorization of $B$, $BP_{0}=Q_{0}R_{0}$ and let $L^T$ be the  $R$-factor in the unpivoted QR factorization of $(R_0)^T$, $R_{0}^{T}=Q_{1}L^{T}$. Suppose $\sigma_{s}(B)>\sigma_{s+1}(B)$ for some $1\leq s<k$. Partition $R_{0}$ and $L$ as
\[
R_{0}=\left[
                                   \begin{array}{cc}
                                     R_{11} & R_{12} \\
                                     0 & R_{22} \\
                                   \end{array}
                                 \right]\quad \mbox{and}\quad L=\left[
                                              \begin{array}{cc}
                                                L_{11} & 0 \\
                                                L_{21} & L_{22} \\
                                              \end{array}
                                            \right],
\]
where $R_{11}$, $L_{11}\in\mathbb{R}^{s\times s}$. Assume that $\|R_{22}\|_{2}\leq\sqrt{(s+1)(l_{1}-s)}\sigma_{s+1}(B)$, $\sigma_{s}(R_{11})\geq\frac{\sigma_{s}(B)}{\sqrt{s(l_{1}-s+1)}}$, and $\frac{\|R_{22}\|_{2}}{\sigma_s(R_{11})}<1$. Then for any given $0<\Delta\ll1$, we have
\begin{small}
\begin{equation}
\frac{\sigma_{j}^{-1}(L_{11})-\sigma_{j}^{-1}(A)}{\sigma_{j}^{-1}(A)}\leq\rho-1+\frac{\mathcal{C\rho}}{\sigma_{j}(B)}+\left(\frac{\mathcal{C\rho}}{\sigma_{j}(B)}+\rho\right)\left(\frac{\sigma_{s+1}(B)}{\sigma_{s}(B)}\right)^{2}\mathcal{O}\left(\frac{q^{\frac{5}{2}}\|R_{12}\|_{2}^{2}}{(1-\rho_{1}^{2})\sigma_{s}^2(L_{11})}\right)
\end{equation}
\end{small}
with probability not less than $1-e^{-a_{2}m}-e^{-a_{2}n}-e^{-c_{2}l_{1}}-e^{-c_{2}l_{2}}-\Delta$ for all $i=1,\ldots,s$;
\begin{small}
\begin{equation}
\frac{\sigma_{j}(L_{22})-\sigma_{s+j}(A)}{\sigma_{s+j}(A)}\leq\frac{\mathcal{C}}{\sigma_{s+j}(B)-\mathcal{C}}+\left(1+\frac{\mathcal{C}}{\sigma_{s+j}(B)-\mathcal{C}}\right)\left(\frac{\sigma_{s+1}(B)}{\sigma_{s}(B)}\right)^{2}\mathcal{O}\left(\frac{q^{\frac{5}{2}}\|R_{12}\|_{2}^{2}}{(1-\rho_{1}^{2})\sigma_{s}^2(L_{11})}\right)
\end{equation}
\end{small}
with probability not less than $1-e^{-a_{2}m}-e^{-a_{2}n}-e^{-c_{2}l_{1}}-e^{-c_{2}l_{2}}$ for all $j=1,\ldots,k-s$,  where $\rho_{1}=\frac{\|L_{22}\|_{2}}{\sigma_{s}(L_{11})}$,
\[
\mathcal{C}=2\frac{a_{1}\sqrt{m}}{c_{1}\sqrt{l_{2}}}\sqrt{\frac{a_{1}^{2}n}{c_{1}^{2}l_{1}}+1}\cdot\sigma_{k+1}(A)
\quad\mbox{and}\quad
\rho=\sqrt{1+\mathcal{C}_{\Delta}^{2}\left(\frac{\sigma_{k+1}(A)}{\sigma_{j}(A)}\right)^{2}},
\]
with
\[
\mathcal{C}_{\Delta}=\frac{e\sqrt{l_{1}}}{p+1}\left(\frac{2}{\Delta}\right)^{\frac{1}{p+1}}
\left(\sqrt{n-k}+\sqrt{l_{1}}+\sqrt{2\log\frac{2}{\Delta}}\right),
\]
and the parameters $a_{1}, c_{1}$ and $c_{2}$ are defined as in Definition \ref{def:general-gaussian} and Remark \ref{rem:parameter0}--\ref{rem:parameter}.
\end{thm}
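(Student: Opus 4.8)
The plan is to reduce the interior singular value estimates for $L_{11}$ and $L_{22}$ to the known deterministic QLP bounds in Lemma~\ref{lem:block-error}, applied to the matrix $B$ in place of $A$, and then to replace the quantities $\sigma_j(B)$ that appear there by quantities involving $\sigma_j(A)$ using the perturbation inequalities \eqref{equ:4.24}, \eqref{equ:4.25}, and the lower bound $\sigma_j(V^TA)\ge \sigma_j(A)/\rho$ already obtained inside the proof of Theorem~\ref{thm:singular-error1}. In detail, I would first apply Lemma~\ref{lem:block-error} to $B$: this gives, for $j=1,\dots,s$,
\[
\frac{\sigma_j^{-1}(L_{11})-\sigma_j^{-1}(B)}{\sigma_j^{-1}(B)}\le\left(\frac{\sigma_{s+1}(B)}{\sigma_s(B)}\right)^2\mathcal{O}\!\left(\frac{q^{5/2}\|R_{12}\|_2^2}{(1-\rho_1^2)\sigma_s^2(R_{11})}\right),
\]
and the analogous bound for $\sigma_j(L_{22})$ against $\sigma_{s+j}(B)$, where I use the hypotheses on $\|R_{22}\|_2$, $\sigma_s(R_{11})$ and $\frac{\|R_{22}\|_2}{\sigma_s(R_{11})}$ exactly as stated. (I would also note that $\sigma_s(R_{11})\ge\sigma_s(L_{11})$ up to the constants, so the denominator can be written with $\sigma_s^2(L_{11})$ as in the statement.)

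The second step is bookkeeping on the singular values of $B$. From \eqref{equ:4.24} with $\mathcal{C}=\sigma_1(B-V^TA)$ as in \eqref{equ:4.25}, together with $\sigma_j(V^TA)\ge\sigma_j(A)/\rho$, one gets both $\sigma_j(L)=\sigma_j(B)\ge\sigma_j(A)/\rho-\mathcal{C}$ and $\sigma_j(B)\le\sigma_j(V^TA)+\mathcal{C}\le\sigma_j(A)+\mathcal{C}$ for $j\le k$. Rearranging the first QLP inequality as
\[
\sigma_j^{-1}(L_{11})\le\sigma_j^{-1}(B)\left(1+\left(\tfrac{\sigma_{s+1}(B)}{\sigma_s(B)}\right)^2\mathcal{O}(\cdots)\right),
\]
multiplying by $\sigma_j(A)$ and using $\sigma_j(A)/\sigma_j(B)=\sigma_j(A)\sigma_j^{-1}(B)\le \rho + \mathcal{C}\rho/\sigma_j(B)$ (which follows from $\sigma_j(B)\ge\sigma_j(A)/\rho-\mathcal{C}$, i.e. $\sigma_j(A)\le\rho\sigma_j(B)+\mathcal{C}\rho$), I obtain
\[
\frac{\sigma_j^{-1}(L_{11})-\sigma_j^{-1}(A)}{\sigma_j^{-1}(A)}=\frac{\sigma_j(A)}{\sigma_j(L_{11})}-1\le\rho-1+\frac{\mathcal{C}\rho}{\sigma_j(B)}+\left(\rho+\frac{\mathcal{C}\rho}{\sigma_j(B)}\right)\left(\frac{\sigma_{s+1}(B)}{\sigma_s(B)}\right)^2\mathcal{O}(\cdots),
\]
which is exactly the claimed first inequality. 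For the $L_{22}$ bound I instead use $\sigma_{s+j}(B)\le\sigma_{s+j}(A)+\mathcal{C}$ rewritten as $\sigma_{s+j}(A)\ge\sigma_{s+j}(B)-\mathcal{C}$; dividing the QLP relation $\sigma_j(L_{22})-\sigma_{s+j}(B)\le\sigma_{s+j}(B)(\cdots)\mathcal{O}(\cdots)$ by $\sigma_{s+j}(A)$ and inserting $\sigma_{s+j}(B)/\sigma_{s+j}(A)\le 1+\mathcal{C}/(\sigma_{s+j}(B)-\mathcal{C})$ gives the second inequality. The probability statements are inherited: the bound $\mathcal{C}$ in \eqref{equ:4.25} holds with probability at least $1-e^{-a_2m}-e^{-a_2n}-e^{-c_2l_1}-e^{-c_2l_2}$, and the lower bound involving $\rho$ costs an extra $\Delta$ (via Lemma~\ref{lem:bound-Omega}); since the $L_{22}$ bound only uses the upper estimate on $\sigma_{s+j}(B)$, it does not need the $\rho$ event and hence avoids the $-\Delta$ term, as reflected in the statement.

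The main obstacle is not any single deep estimate but rather keeping the two families of perturbation inequalities consistent: one must be careful that the QLP lemma is applied to $B$ (so all of $\|R_{12}\|_2,\|R_{22}\|_2,\sigma_s(R_{11}),\rho_1$ refer to the factors of $B$, which is what the theorem's hypotheses assume), and then that the conversion from $\sigma_j(B)$ to $\sigma_j(A)$ is done in the correct direction for each term — upper bounds on $\sigma_j(B)$ for the $L_{22}$ (tail) part and the two-sided control $\sigma_j(A)/\rho-\mathcal{C}\le\sigma_j(B)\le\sigma_j(A)+\mathcal{C}$ for the $L_{11}$ (leading) part. A minor subtlety is that Lemma~\ref{lem:block-error} is stated with the pivoted/unpivoted QR factors and a block size $k$, so I apply it with block size $s$ and $q$ replaced by $l_1$ (the number of rows of $B$), consistent with the hypotheses $\|R_{22}\|_2\le\sqrt{(s+1)(l_1-s)}\,\sigma_{s+1}(B)$ and $\sigma_s(R_{11})\ge\sigma_s(B)/\sqrt{s(l_1-s+1)}$ already built into the statement. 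No new probabilistic machinery beyond Lemmas~\ref{lem:smallest-bound}, \ref{lem:AGF-F}, \ref{lem:bound-Omega}, and \ref{lem:singular-guming} is needed; the argument is essentially a careful assembly of Theorem~\ref{thm:singular-error1}'s ingredients with Lemma~\ref{lem:block-error}.
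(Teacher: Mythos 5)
Your proposal is correct and follows essentially the same approach as the paper: apply Lemma~\ref{lem:block-error} to $B$ with block size $s$ (so $q$ is effectively $l_1$ and the $R_{0},L$ factors are those of $B$), then convert from $\sigma_j(B)$ to $\sigma_j(A)$ using the two-sided perturbation bounds from Theorem~\ref{thm:singular-error1} --- the upper bound $\sigma_j(A)\le\rho\sigma_j(B)+\mathcal{C}\rho$ for the $L_{11}$ estimate and the lower bound $\sigma_{s+j}(A)\ge\sigma_{s+j}(B)-\mathcal{C}$ for the $L_{22}$ estimate, with the $-\Delta$ in the probability appearing only where the $\rho$-event is invoked. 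Your algebra (factoring $\sigma_j^{-1}(L_{11})\le\sigma_j^{-1}(B)(1+X)$ and multiplying by $\sigma_j(A)$) differs only cosmetically from the paper's add-and-subtract manipulation and reaches the same displayed bounds.
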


\begin{proof}
From Theorem \ref{thm:singular-error1}, we know that
\begin{equation*}
\sigma_{j}(B)\leq\mathcal{C}+\sigma_{j}(A),\quad\mbox{i.e.,}\quad \frac{1}{\sigma_{j}(A)}\leq\frac{1}{\sigma_{j}(B)-\mathcal{C}}
\end{equation*}
with probability not less than $1-e^{-a_{2}m}-e^{-a_{2}n}-e^{-c_{2}l_{1}}-e^{-c_{2}l_{2}}$  for all $j=1,\ldots, k$; Similarly,
\begin{equation*}
\sigma_{j}(B)\geq\frac{\sigma_{j}(A)}{\rho}-\mathcal{C},\quad\mbox{i.e.,}\quad \sigma_{j}(A)\leq\mathcal{C}\rho+\rho\sigma_{j}(B)
\end{equation*}
with probability not less than $1-e^{-a_{2}m}-e^{-a_{2}n}-e^{-c_{2}l_{1}}-e^{-c_{2}l_{2}}-\Delta$  for all $j=1,\ldots, k$.

Using Lemma \ref{lem:block-error} we have, for $j=1,\ldots, k-s$,
\begin{small}
\[
\sigma_{j}(L_{22})-\sigma_{s+j}(A)\leq\sigma_{s+j}(B)-\sigma_{s+j}(A)+\frac{\sigma_{s+1}^{2}(B)\sigma_{s+j}(B)}
{\sigma_{s}^{2}(B)}\mathcal{O}\left(\frac{q^{\frac{5}{2}}\|R_{12}\|_{2}^{2}}{(1-\rho_{1}^{2})\sigma_{s}^2(L_{11})}\right).
\]
\end{small}
Dividing $\sigma_{s+j}(A)$ on both sides of the above inequality yields
\begin{small}
\begin{eqnarray}
\frac{\sigma_{j}(L_{22})-\sigma_{s+j}(A)}{\sigma_{s+j}(A)}&\leq&\frac{\sigma_{s+j}(B)-\sigma_{s+j}(A)}{\sigma_{s+j}(A)}+\frac{\sigma_{s+1}^{2}(B)\sigma_{s+j}(B)}{\sigma_{s+j}(A)\sigma_{s}^{2}(B)}\mathcal{O}\left(\frac{q^{\frac{5}{2}}\|R_{12}\|_{2}^{2}}{(1-\rho_{1}^{2})\sigma_{s}^2(L_{11})}\right)\nonumber\\
&\leq&\frac{\mathcal{C}}{\sigma_{s+j}(B)-\mathcal{C}}+\frac{\sigma_{s+1}^{2}(B)\sigma_{s+j}(B)}{(\sigma_{s+j}(B)-\mathcal{C})\sigma_{s}^{2}(B)}\mathcal{O}\left(\frac{q^{\frac{5}{2}}\|R_{12}\|_{2}^{2}}{(1-\rho_{1}^{2})\sigma_{s}^2(L_{11})}\right)\nonumber\\
&=&\frac{\mathcal{C}}{\sigma_{s+j}(B)-\mathcal{C}}+\left(1+\frac{\mathcal{C}}{\sigma_{s+j}(B)-\mathcal{C}}\right)\left(\frac{\sigma_{s+1}(B)}{\sigma_{s}(B)}\right)^{2}\mathcal{O}\left(\frac{q^{\frac{5}{2}}\|R_{12}\|_{2}^{2}}{(1-\rho_{1}^{2})\sigma_{s}^2(L_{11})}\right)\nonumber
\end{eqnarray}
\end{small}
with probability not less than $1-e^{-a_{2}m}-e^{-a_{2}n}-e^{-c_{2}l_{1}}-e^{-c_{2}l_{2}}$ for all $j=1,\ldots, k-s$.

By  Lemma \ref{lem:block-error} again we have, for $j=1,\ldots, s$, $$\sigma_{j}^{-1}(L_{11})-\sigma_{j}^{-1}(A)\leq\sigma_{j}^{-1}(B)-\sigma_{j}^{-1}(A)+\frac{\sigma_{s+1}^{2}(B)}{\sigma_{s}^{2}(B)\sigma_{j}(B)}\mathcal{O}\left(\frac{q^{\frac{5}{2}}\|R_{12}\|_{2}^{2}}{(1-\rho_{1}^{2})\sigma_{s}^2(L_{11})}\right).$$
Dividing $\sigma_{j}^{-1}(A)$ on both sides of the above inequality yields
\begin{small}
\begin{eqnarray}
&&\frac{\sigma_{j}^{-1}(L_{11})-\sigma_{j}^{-1}(A)}{\sigma_{j}^{-1}(A)}\\
&\leq&\frac{\sigma_{j}^{-1}(B)-\sigma_{j}^{-1}(A)}{\sigma_{j}^{-1}(A)}+\frac{\sigma_{j}(A)\sigma_{s+1}^{2}(B)}{\sigma_{s}^{2}(B)\sigma_{j}(B)}\mathcal{O}\left(\frac{q^{\frac{5}{2}}\|R_{12}\|_{2}^{2}}{(1-\rho_{1}^{2})\sigma_{s}^2(L_{11})}\right)\nonumber\\
&\leq&\frac{\sigma_{j}^{-1}(B)-(\rho\sigma_{j}(B)+\mathcal{C}\rho)^{-1}}{(\rho\sigma_{j}(B)+\mathcal{C}\rho)^{-1}}+\frac{(\rho\sigma_{j}(B)+\mathcal{C}\rho)\sigma_{s+1}^{2}(B)}{\sigma_{s}^{2}(B)\sigma_{j}(B)}\mathcal{O}\left(\frac{q^{\frac{5}{2}}\|R_{12}\|_{2}^{2}}{(1-\rho_{1}^{2})\sigma_{s}^2(L_{11})}\right)\nonumber\\
&=&\frac{\rho\sigma_{j}(B)+\mathcal{C}\rho}{\sigma_{j}(B)}-1+\frac{\rho\sigma_{j}(B)+\mathcal{C}\rho}{\sigma_{j}(B)}\left(\frac{\sigma_{s+1}(B)}{\sigma_{s}(B)}\right)^{2}\mathcal{O}\left(\frac{q^{\frac{5}{2}}\|R_{12}\|_{2}^{2}}{(1-\rho_{1}^{2})\sigma_{s}^2(L_{11})}\right)\nonumber\\
&=&\rho-1+\frac{\mathcal{C\rho}}{\sigma_{j}(B)}+\left(\frac{\mathcal{C\rho}}{\sigma_{j}(B)}+\rho\right)\left(\frac{\sigma_{s+1}(B)}{\sigma_{s}(B)}\right)^{2}\mathcal{O}\left(\frac{q^{\frac{5}{2}}\|R_{12}\|_{2}^{2}}{(1-\rho_{1}^{2})\sigma_{s}^2(L_{11})}\right)\nonumber
\end{eqnarray}
\end{small}
with probability not less than $1-e^{-a_{2}m}-e^{-a_{2}n}-e^{-c_{2}l_{1}}-e^{-c_{2}l_{2}}-\Delta$  for all $j=1,\ldots, s$.
\end{proof}

\begin{cor} Under the same assumptions of Theorem {\rm \ref{thm:singular-error3}}, if $s>k$, then
\begin{small}
\begin{equation*}
\begin{split}
\frac{\sigma_{j}^{-1}(L_{11})-\sigma_{j}^{-1}(A)}{\sigma_{j}^{-1}(A)}\leq\rho-1+\frac{\mathcal{C\rho}}{\sigma_{k}(B)}+\left(\frac{\mathcal{C\rho}}{\sigma_{k}(B)}+\rho\right)\left(\frac{\sigma_{s+1}(B)}{\sigma_{s}(B)}\right)^{2}\mathcal{O}\left(\frac{q^{\frac{5}{2}}\|R_{12}\|_{2}^{2}}{(1-\rho_{1}^{2})\sigma_{s}^2(L_{11})}\right)
\end{split}
\end{equation*}
\end{small}
with probability not less than $1-e^{-a_{2}m}-e^{-a_{2}n}-e^{-c_{2}l_{1}}-e^{-c_{2}l_{2}}-\Delta$  for all $j=1,\ldots,k$. In particular, if $s=k$, then
\begin{small}
\begin{equation*}
\begin{split}
\frac{\sigma_{j}^{-1}(L_{11})-\sigma_{j}^{-1}(A)}{\sigma_{j}^{-1}(A)}\leq\rho-1+\frac{\mathcal{C\rho}}{\sigma_{k}(B)}+\left(\frac{\mathcal{C\rho}}{\sigma_{k}(B)}+\rho\right)\left(\frac{\sigma_{k+1}(B)}{\sigma_{k}(B)}\right)^{2}\mathcal{O}\left(\frac{q^{\frac{5}{2}}\|R_{12}\|_{2}^{2}}{(1-\rho_{1}^{2})\sigma_{k}^2(L_{11})}\right)
\end{split}
\end{equation*}
\end{small}
with probability not less than $1-e^{-a_{2}m}-e^{-a_{2}n}-e^{-c_{2}l_{1}}-e^{-c_{2}l_{2}}-\Delta$  for all $j=1,\ldots,k$.
\end{cor}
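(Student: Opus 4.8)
The plan is to re-run the proof of Theorem~\ref{thm:singular-error3} with the block size $s$ now allowed to exceed $k$, paying attention to which of the two index ranges entering that argument survives in the new regime.

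First I would apply Lemma~\ref{lem:block-error} to the matrix $B$ produced by Algorithm 3, taking $B$ in the role of the ``$A$'' of that lemma, with the leading block of size $s\times s$; the hypotheses needed there --- $\|R_{22}\|_2\le\sqrt{(s+1)(l_1-s)}\,\sigma_{s+1}(B)$, $\sigma_s(R_{11})\ge\sigma_s(B)/\sqrt{s(l_1-s+1)}$, and $\|R_{22}\|_2/\sigma_s(R_{11})<1$ --- are precisely those assumed in Theorem~\ref{thm:singular-error3}. Estimate \eqref{equ:4.22} then gives, for every $j=1,\dots,s$,
\[
\sigma_j^{-1}(L_{11})-\sigma_j^{-1}(B)\le \sigma_j^{-1}(B)\left(\frac{\sigma_{s+1}(B)}{\sigma_s(B)}\right)^2\mathcal{O}\left(\frac{q^{5/2}\|R_{12}\|_2^2}{(1-\rho_1^2)\sigma_s^2(L_{11})}\right),
\]
and, since $k<s$, this in particular holds for $j=1,\dots,k$.

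Next I would transfer the singular values of $B$ to those of $A$ using Theorem~\ref{thm:singular-error1}, which gives $\sigma_j(A)\le\mathcal{C}\rho+\rho\,\sigma_j(B)$ --- but only for $j=1,\dots,k$. Subtracting $\sigma_j^{-1}(A)$ from the displayed inequality and dividing through by $\sigma_j^{-1}(A)$, exactly as in the proof of Theorem~\ref{thm:singular-error3}, the term $\sigma_j(A)/\sigma_j(B)-1$ is bounded by $\rho-1+\mathcal{C}\rho/\sigma_j(B)$ while the $\mathcal{O}(\cdot)$ term acquires the factor $\sigma_j(A)/\sigma_j(B)\le\rho+\mathcal{C}\rho/\sigma_j(B)$. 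Finally, since $\sigma_j(B)\ge\sigma_k(B)$ for $1\le j\le k$, I would replace each $\sigma_j(B)$ in the denominators by $\sigma_k(B)$, which produces a bound uniform in $j$ and is exactly the stated inequality; the failure event lies in the union of the bad events of Theorem~\ref{thm:singular-error1}, so the bound holds with probability at least $1-e^{-a_2m}-e^{-a_2n}-e^{-c_2l_1}-e^{-c_2l_2}-\Delta$. The case $s=k$ follows by merely substituting $s=k$, so that $\sigma_{s+1}(B)=\sigma_{k+1}(B)$, $\sigma_s(B)=\sigma_k(B)$, and $\sigma_s^2(L_{11})=\sigma_k^2(L_{11})$.

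I do not expect a genuine obstacle, since the corollary is essentially bookkeeping on top of Theorem~\ref{thm:singular-error3}. The one point that deserves care --- and that explains why the conclusion runs over $j=1,\dots,k$ rather than $j=1,\dots,s$ --- is the mismatch of index ranges: Lemma~\ref{lem:block-error} controls $\sigma_j^{-1}(L_{11})$ for every $j\le s$, whereas the passage from $B$ back to $A$ supplied by Theorem~\ref{thm:singular-error1} is available only for $j\le k$, so one must restrict to $j=1,\dots,k$ and cannot reach the indices $k<j\le s$ by this route. The clean uniform form with $\sigma_k(B)$ in the denominators is then immediate from the ordering of the singular values of $B$.
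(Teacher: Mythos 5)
Your proposal is correct and follows what must be the intended derivation (the paper states this corollary without proof). You correctly apply Lemma~\ref{lem:block-error} to $B$ with block size $s$, transfer $\sigma_j(B)$ to $\sigma_j(A)$ via Theorem~\ref{thm:singular-error1} exactly as in the proof of Theorem~\ref{thm:singular-error3}, properly identify that the index range is cut off at $k$ (not $s$) because the $B$-to-$A$ transfer is only available for $j\le k$, and obtain the uniform $\sigma_k(B)$ in the denominators from the monotonicity $\sigma_j(B)\ge\sigma_k(B)$ for $j\le k$; the $s=k$ case is indeed just substitution.
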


\subsection{Error analysis of Algorithm 4}
\subsubsection{Matrix approximation error analysis}
We first provide a matrix approximation error bound for Algorithm 4.

\begin{thm}\label{thm:matrix-error2}
Let $A\in\mathbb{R}^{m\times n}$ be full rank and let $k$ be target rank. Suppose $k$ and $l$ are such that $l>(1+\frac{1}{\ln k})k$. Let $Q$, $L$, and $P$ be generated by Algorithm 4. Then, for any $0<\Delta\ll1$, we have
\begin{equation}
\|A-QLP^{T}\|_{2}\leq2\sqrt{\frac{a_{1}^{2}n}{c_{1}^{2}l}+1}\cdot\sigma_{k+1}(A)
\end{equation}
with probability not less than $1-e^{-a_{2}n}-e^{-c_{2}l}$;
\begin{equation}
\|A-QLP^{T}\|_{F}\leq\sqrt{\frac{k\sigma_{1}^{2}(A)\sigma_{k+1}^{2}(A)\mathcal{C}_{\Delta}^{2}}{\sigma_{k+1}^{2}(A)\mathcal{C}_{\Delta}^{2}+\sigma_{1}^{2}(A)}+\sum\limits_{i=k+1}^{n}\sigma_{i}^{2}(A)}\\
\end{equation}
with probability not less than $1-\Delta$, where
\[
\mathcal{C}_{\Delta}=\frac{e\sqrt{l}}{p+1}\left(\frac{2}{\Delta}\right)^{\frac{1}{p+1}}
\left(\sqrt{n-k}+\sqrt{l}+\sqrt{2\log\frac{2}{\Delta}}\right)
\]
and the parameters $a_{1}, c_{1}$ and $c_{2}$ are defined as in Definition \ref{def:general-gaussian} and Remark \ref{rem:parameter0}--\ref{rem:parameter}.
\end{thm}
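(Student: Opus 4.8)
The plan is to observe that, unlike in Algorithm 3, the second‑pass sketch of Algorithm 4 is $\Omega_2=Y_1^{T}=(A\Omega)^{T}$, and that this choice forces the matrix $B$ computed in Step 4 to equal $V^{T}A$ \emph{exactly}; hence Algorithm 4 reproduces, in exact arithmetic, the output of Algorithm 2, and the theorem reduces to bounding the randomized–range‑finder projection error $\|A-VV^{T}A\|$, which is already controlled by Lemmas \ref{lem:AGF-F}--\ref{lem:bound-Omega}. First I would establish the identity. Since $A$ has full rank and $\Omega\in\R^{n\times l}$ is a standard Gaussian matrix with $l<\min\{m,n\}$, the sketch $Y_1=A\Omega\in\R^{m\times l}$ has full column rank almost surely, so in the economy factorization $Y_1=VR$ the triangular factor $R\in\R^{l\times l}$ is invertible. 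Then $Y_1^{T}V=(VR)^{T}V=R^{T}$, hence $(Y_1^{T}V)^{\dag}=(R^{T})^{-1}$ and
\[
B=(Y_1^{T}V)^{\dag}Y_2=(R^{T})^{-1}Y_1^{T}A=(R^{T})^{-1}R^{T}V^{T}A=V^{T}A,
\]
so that $QLP^{T}=V\widehat{Q}LP^{T}=VB=VV^{T}A$ with $V$ an orthonormal basis of $\range(A\Omega)$.

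For the $2$-norm I would repeat the triangle‑inequality argument from the proof of Theorem \ref{thm:matrix-error1}: for any $F\in\R^{l\times n}$,
\[
\|A-VV^{T}A\|_{2}\le\|VV^{T}A-VV^{T}A\Omega F\|_{2}+\|VV^{T}A\Omega F-A\Omega F\|_{2}+\|A\Omega F-A\|_{2},
\]
where the first term is at most $\|A\Omega F-A\|_{2}$ because $\|VV^{T}\|_{2}=1$, and the middle term vanishes because $A\Omega=VR$ gives $VV^{T}A\Omega=A\Omega$. Hence $\|A-VV^{T}A\|_{2}\le 2\|A\Omega F-A\|_{2}$, and choosing $F$ as in Lemma \ref{lem:AGF-F} (applicable since $l>(1+\frac{1}{\ln k})k$) yields $\|A-QLP^{T}\|_{2}\le 2\sqrt{\frac{a_1^{2}n}{c_1^{2}l}+1}\,\sigma_{k+1}(A)$ with probability at least $1-e^{-c_2 l}-e^{-a_2 n}$.

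For the Frobenius norm I would invoke Lemma \ref{lem:proj-error} directly, with the role of its $Q$ played by the orthonormal basis $V$ of $\range(A\Omega)$ and with oversampling parameter $p=l-k$ (so $\sigma_{l-p+1}(A)=\sigma_{k+1}(A)$); the required full‑row‑rank of $\hat\Omega_1$ holds by the standing assumption that all Gaussian matrices have full rank. This yields
\[
\|A-QLP^{T}\|_{F}\le\sqrt{\frac{k\sigma_1^{2}(A)\sigma_{k+1}^{2}(A)\|\hat\Omega_2\|_2^{2}\|\hat\Omega_1^{\dag}\|_2^{2}}{\sigma_{k+1}^{2}(A)\|\hat\Omega_2\|_2^{2}\|\hat\Omega_1^{\dag}\|_2^{2}+\sigma_1^{2}(A)}+\sum_{i=k+1}^{n}\sigma_i^{2}(A)}.
\]
Because $t\mapsto\frac{k\sigma_1^{2}(A)\,t}{t+\sigma_1^{2}(A)}$ is nondecreasing on $t\ge 0$, I may replace $\|\hat\Omega_2\|_2^{2}\|\hat\Omega_1^{\dag}\|_2^{2}$ by the upper bound $\mathcal{C}_\Delta^{2}$ from Lemma \ref{lem:bound-Omega} (noting $n-l+p=n-k$), which holds with probability at least $1-\Delta$; this gives the stated Frobenius estimate.

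There is no serious analytic obstacle once $B=V^{T}A$ is recognized; the points demanding care are purely structural: justifying invertibility of $R$, hence of $Y_1^{T}V=R^{T}$, from the full‑rank hypothesis on $A$ together with $l<\min\{m,n\}$; verifying the full‑row‑rank of $\hat\Omega_1$ used by Lemma \ref{lem:proj-error}; and the monotonicity that legitimizes substituting $\mathcal{C}_\Delta$ in the Frobenius bound. These are exactly the places where Algorithm 4 is cleaner than Algorithm 3: the cross term $\|VV^{T}A-VB\|$ that produced the factor $1+\frac{a_1\sqrt{m}}{c_1\sqrt{l_2}}$ there is now identically zero, which is why the probability here no longer involves the events $\{\|\Omega_2\|_2\le a_1\sqrt{m}\}$ and $\{\sigma_{l_1}(\Omega_2 V)\ge c_1\sqrt{l_2}\}$.
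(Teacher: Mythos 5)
Your proof is correct and follows essentially the same route as the paper: establishing $B=V^TA$ (hence $QLP^T=VV^TA$) from the invertibility of $R^T=Y_1^TV$ under the full-rank hypothesis, then invoking the same range-finder bounds — the $F$-matrix triangle-inequality argument via Lemma \ref{lem:AGF-F} for the $2$-norm, and Lemma \ref{lem:proj-error} with Lemma \ref{lem:bound-Omega} for the Frobenius norm. You are somewhat more explicit than the paper (which only says ``using the similar proof of \eqref{equ:qlp-est}'') about the monotonicity of $t\mapsto\frac{k\sigma_1^2\sigma_{k+1}^2 t}{\sigma_{k+1}^2 t+\sigma_1^2}$ that justifies substituting $\mathcal{C}_\Delta^2$, and about the choice $p=l-k$ making $\sigma_{l-p+1}(A)=\sigma_{k+1}(A)$ and $n-l+p=n-k$, but these are clarifications of the same argument, not a different one.
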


\begin{proof}
By hypothesis, $A$ is full rank. Then $Y_{1}=A\Omega$ is full rank and thus $R=V^{T}Y_{1}$ is full rank. For  $B$, $Q$, $L$, and $P$ generated by Algorithm 4, we have
\begin{eqnarray}
\|A-QLP^{T}\|_{2}&=&\|A-VB\|_{2}=\|A-V(Y_{1}^{T}V)^{\dag}Y_{1}^{T}A\|_{2}\nonumber\\
&=&\|A-V(R^{T})^{\dag}R^{T}V^{T}A\|_{2} =\|A-VV^{T}A\|_{2},\nonumber
\end{eqnarray}
where the last equality follows from $(R^{T})^{\dag}R^{T}=I$ since $R^{T}=Y_{1}^{T}V$ has full column rank.
Since $l>\left(1+\frac{1}{\ln k}\right)k$, it follows from (\ref{equ:error1}) that
\begin{equation*}
\|A-QLP^{T}\|_{2}=\|A-VV^{T}A\|_{2}\leq 2\sqrt{\frac{a_{1}^{2}n}{c_{1}^{2}l}+1}\cdot\sigma_{k+1}(A)
\end{equation*}
with probability not less than $1-e^{-c_{2}l}-e^{-a_{2}n}$.

Using the similar proof of \eqref{equ:qlp-est} we have, for any given $0<\Delta\ll1$,
\begin{eqnarray}
\|A-QLP^{T}\|_{F}=\|A-VV^{T}A\|_{F}
\leq\sqrt{\frac{k\sigma_{1}^{2}(A)\sigma_{k+1}^{2}(A)\mathcal{C}_{\Delta}^{2}}{\sigma_{k+1}^{2}(A)\mathcal{C}_{\Delta}^{2}+\sigma_{1}^{2}(A)}+\sum\limits_{i=k+1}^{n}\sigma_{i}^{2}(A)}\nonumber
\end{eqnarray}
with probability not less than $1-\Delta$.
\end{proof}

\begin{rem}
As Remark \ref{rem:bd-pro}, for $Q$, $L$, and $P$ generated by Algorithm 4, we can easily derive that
\begin{equation}
\|A-QLP^{T}\|_{2}\leq\sqrt{\frac{k\sigma_{1}^{2}(A)\sigma_{k+1}^{2}(A)\mathcal{C}_{\Delta}^{2}}
{\sigma_{k+1}^{2}(A)\mathcal{C}_{\Delta}^{2}+\sigma_{1}^{2}(A)}+\sigma_{k+1}^{2}(A)}
\end{equation}
with probability not less than $1-\Delta$.
\end{rem}

\subsubsection{Singular value approximation analysis}
As in Section \ref{sec412}, we assume that the second QR decomposition of the QLP decomposition for the matrix $B$ generated by Algorithm 4 is without pivoting. We first give the following result on the singular value approximation error of  Algorithm 4.

\begin{thm}\label{thm:singular-error4}
Let $A$ be full rank and let $A\approx QLP^{T}$ be rank-$k$ SORQLP decomposition produced by Algorithm 4. For $B$ generated by  Algorithm 3, let $R_0$ be the $R$-factor in the pivoted QR factorization of $B$, $BP_{0}=Q_{0}R_{0}$ and let $L^T$ be the  $R$-factor in the unpivoted QR factorization of $(R_0)^T$, $R_{0}^{T}=Q_{1}L^{T}$.
Then we have
$$\sigma_{j}(L)\leq\sigma_{j}(A),\quad \forall 1\leq j\leq k.$$
Moverover,  for any $0<\Delta\ll1$, we have  $$\sigma_{j}(L)\geq\frac{\sigma_{j}(A)}{\rho}$$ with probability not less than $1-\Delta$ for all $j=1,\ldots, k$, where
\[
\rho=\sqrt{1+\mathcal{C}_{\Delta}^{2}\left(\frac{\sigma_{k+1}(A)}{\sigma_{j}(A)}\right)^{2}}\quad\mbox{with}\quad
\mathcal{C}_{\Delta}=\frac{e\sqrt{l}}{p+1}\left(\frac{2}{\Delta}\right)^{\frac{1}{p+1}}
\left(\sqrt{n-k}+\sqrt{l}+\sqrt{2\log\frac{2}{\Delta}}\right).
\]

\end{thm}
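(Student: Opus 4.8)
The plan is to reduce everything to the single identity $B=V^{T}A$, which holds \emph{exactly} for Algorithm 4 under the full-rank hypothesis, just as in the proof of Theorem \ref{thm:matrix-error2}. Write the reduced QR factorization $Y_{1}=A\Omega=VR$ with $R\in\R^{l\times l}$ upper triangular. Since $A$ has full rank and $l\ll\min\{m,n\}$, $Y_{1}=A\Omega$ has full column rank, so $R$ is invertible; moreover $Y_{1}^{T}V=R^{T}V^{T}V=R^{T}$, hence $(Y_{1}^{T}V)^{\dag}=(R^{T})^{-1}$. Therefore the matrix produced in Step 4 of Algorithm 4 is
$$B=(Y_{1}^{T}V)^{\dag}Y_{1}^{T}A=(R^{T})^{-1}R^{T}V^{T}A=V^{T}A.$$
Since the QLP decomposition of $B$ only applies orthogonal factors on the left and the right (from $BP_{0}=Q_{0}R_{0}$ and $R_{0}^{T}=Q_{1}L^{T}$ one gets $B=Q_{0}LQ_{1}^{T}P_{0}^{T}$), we have $\sigma_{j}(L)=\sigma_{j}(B)=\sigma_{j}(V^{T}A)$ for every $j$, exactly as in \eqref{equ:4.24}.

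Granting this, the upper bound is immediate and deterministic: $V$ has orthonormal columns, so Lemma \ref{lem:singular-Goulb} gives $\sigma_{j}(V^{T}A)\leq\sigma_{j}(A)$, i.e.\ $\sigma_{j}(L)\leq\sigma_{j}(A)$, for all $j$ (in particular for $1\leq j\leq k$).

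For the lower bound I would apply Lemma \ref{lem:singular-guming} with the Gaussian test matrix $\Omega$ of Algorithm 4, the orthonormal matrix $Q=V$ (so that $B=Q^{T}A$), and the oversampling index $p=l-k$ (consistent with $l=k+p$), which makes $\sigma_{l-p+1}(A)=\sigma_{k+1}(A)$. Let the SVD of $A$ be as in \eqref{a:svd} and partition $V_{n}^{T}\Omega$ into its first $k$ rows $\hat{\Omega}_{1}\in\R^{k\times l}$ and its remaining $n-k$ rows $\hat{\Omega}_{2}$; since $\Omega$ is Gaussian and $k\leq l$, the block $\hat{\Omega}_{1}$ has full row rank, so Lemma \ref{lem:singular-guming} yields
$$\sigma_{j}(L)=\sigma_{j}(V^{T}A)\geq\frac{\sigma_{j}(A)}{\sqrt{1+\|\hat{\Omega}_{2}\|_{2}^{2}\|\hat{\Omega}_{1}^{\dag}\|_{2}^{2}\bigl(\sigma_{k+1}(A)/\sigma_{j}(A)\bigr)^{2}}},\qquad 1\leq j\leq k.$$
Finally I would invoke Lemma \ref{lem:bound-Omega} (again with $p=l-k$, so that $\sqrt{n-l+p}=\sqrt{n-k}$ and $\mathcal{C}_{\Delta}$ takes the stated form) to bound $\|\hat{\Omega}_{2}\|_{2}\|\hat{\Omega}_{1}^{\dag}\|_{2}\leq\mathcal{C}_{\Delta}$ with probability at least $1-\Delta$; substituting this into the denominator gives $\sigma_{j}(L)\geq\sigma_{j}(A)/\rho$ with $\rho=\sqrt{1+\mathcal{C}_{\Delta}^{2}(\sigma_{k+1}(A)/\sigma_{j}(A))^{2}}$, with the asserted probability.

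There is no serious obstacle here: the two points to handle carefully are (i) the justification of $B=V^{T}A$, which uses the full-rank assumption on $A$ to guarantee invertibility of $R$ (equivalently of $Y_{1}^{T}V$), and (ii) matching the parameters of Lemmas \ref{lem:singular-guming}--\ref{lem:bound-Omega} to $(k,l,p)$ so that $\sigma_{l-p+1}(A)$ becomes $\sigma_{k+1}(A)$ and $\mathcal{C}_{\Delta}$ takes the stated shape. Note that, because $B=V^{T}A$ holds exactly for Algorithm 4 --- whereas for Algorithm 3 an extra perturbation $(\Omega_{2}V)^{\dag}\Omega_{2}$ enters and forces the additive term $\mathcal{C}$ --- the bounds obtained here are strictly cleaner than the corresponding ones in Theorem \ref{thm:singular-error1}.
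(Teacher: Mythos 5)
Your argument is correct and follows the paper's own proof essentially step for step: reduce to $B=V^{T}A$ using full rank of $A$ (so $R$, and hence $Y_{1}^{T}V=R^{T}$, is invertible), note $\sigma_{j}(L)=\sigma_{j}(B)=\sigma_{j}(V^{T}A)$ because the QLP factors of $B$ are orthogonal, get the upper bound from Lemma \ref{lem:singular-Goulb}, and get the lower bound from Lemma \ref{lem:singular-guming} combined with Lemma \ref{lem:bound-Omega}. The only additions over the paper's text are a slightly more explicit justification that $Y_{1}^{T}V=R^{T}$ and the explicit parameter matching $p=l-k$, which make the write-up a bit more self-contained but do not change the argument.
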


\begin{proof}
Since $A$ is full rank, $R$ is full rank. Thus, $$B=(Y_{1}^{T}V)^{\dag}Y_{1}^{T}A=(R^{T})^{\dag}R^{T}V^{T}A=V^{T}A.$$
By Lemma \ref{lem:singular-Goulb} we obtain, for $1\leq j\leq k$, $$\sigma_{j}(L)=\sigma_{j}(B)=\sigma_{j}(V^{T}A)\leq\sigma_{j}(A).$$

On the other hand, let the SVD of $A$ be given by \eqref{a:svd}. Denote
\[
V_n^T\Omega=\left[
                     \begin{array}{c}
                       \hat{\Omega}_{1} \\
                       \hat{\Omega}_{2} \\
                     \end{array}
                   \right],
\]
where $\hat{\Omega}_{1}\in\mathbb{R}^{k\times l}$, $\hat{\Omega}_{2}\in\mathbb{R}^{(n-k)\times l}$. According to Lemma \ref{lem:singular-guming}, we obtain
\[
\sigma_{j}(L)=\sigma_{j}(B)=\sigma_{j}(V^{T}A)\geq
\frac{\sigma_{j}(A)}{\sqrt{1+\|\hat{\Omega}_{2}\|_{2}^{2}\|\hat{\Omega}_{1}^{\dag}\|_{2}^{2}\left(\frac{\sigma_{k+1}(A)}
{\sigma_{j}(A)}\right)^{2}}},
\]
for all $j=1,\ldots, k$.  By Lemma \ref{lem:bound-Omega} we have, for any $0<\Delta\ll1$, $\|\hat{\Omega}_{2}\|_{2}^{2}\|\hat{\Omega}_{1}^{\dag}\|_{2}^{2}\leq\mathcal{C}_{\Delta}^{2}$ with probability not less than $1-\Delta$.
Then we have
\[
\sigma_{j}(L)=\sigma_{j}(V^{T}A)\geq\frac{\sigma_{j}(A)}{\rho}
\]
with probability not less than $1-\Delta$, where $\rho=\sqrt{1+\mathcal{C}_{\Delta}^{2}\left(\frac{\sigma_{k+1}(A)}{\sigma_{j}(A)}\right)^{2}}$.
\end{proof}

Next, we give the following result on the largest singular value approximation
error of Algorithm 4.

\begin{thm}\label{thm:singular-error5}
Let $A\approx QLP^{T}$ be rank-$k$ SORQLP decomposition produced by Algorithm 4. For $B$ generated by  Algorithm 4, let $R_0$ be the $R$-factor in the pivoted QR factorization of $B$, $BP_{0}=Q_{0}R_{0}$ and let $L^T$ be the  $R$-factor in the unpivoted QR factorization of $(R_0)^T$, $R_{0}^{T}=Q_{1}L^{T}$.
Partition $R_{0}$ and $L$ as
\[
R_{0}=\left[
                                                                                    \begin{array}{cc}
                                                                                      r_{11} & R_{12} \\
                                                                                      0 & R_{22} \\
                                                                                    \end{array}
                                                                                  \right]\quad\mbox{and}\quad L=\left[
                                                                                             \begin{array}{cc}
                                                                                               l_{11} & 0 \\
                                                                                               L_{21} & L_{22} \\
                                                                                             \end{array}
                                                                                           \right],
\]
where $r_{11}$, $l_{11}\in\mathbb{R}^{1\times1}$. Assume $\sigma_{1}(B)>\sigma_{2}(B)$, $\|R_{22}\|_{2}\leq\sqrt{2(l-1)}\sigma_{2}(B)$ and $\frac{\|R_{22}\|_{2}}{|r_{11}|}<1$.  Then, for any given $0<\Delta\ll1$, we have
\begin{equation}
|l_{11}|^{-1}-\sigma_{1}^{-1}(A)\leq\frac{1}{\sigma_{1}(B)}\left(\frac{\rho-1}{\rho}\right)+\frac{\sigma_{2}^{2}(B)}{\sigma_{1}^{3}(B)}\mathcal{O}\left(\frac{q^{\frac{5}{2}}\|R_{12}\|_{2}^{2}}{\left(1-\rho_{1}^{2}\right)l_{11}^{2}}\right)
\end{equation}
with probability not less than $1-\Delta$, where $\rho_{1}=\frac{\|L_{22}\|_{2}}{|l_{11}|}$,
\[
\rho=\sqrt{1+\mathcal{C}_{\Delta}^{2}\left(\frac{\sigma_{k+1}(A)}{\sigma_{j}(A)}\right)^{2}}
\]
with
\[
\mathcal{C}_{\Delta}=\frac{e\sqrt{l}}{p+1}\left(\frac{2}{\Delta}\right)^{\frac{1}{p+1}}\left(\sqrt{n-k}
+\sqrt{l}+\sqrt{2\log\frac{2}{\Delta}}\right).
\]
\end{thm}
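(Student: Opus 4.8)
The plan is to combine the singular value lower bound for $L$ from Theorem~\ref{thm:singular-error4} with the deterministic QLP error bound for the largest singular value from Lemma~\ref{lem:l11-smallsingular}, applied to the matrix $B$ generated by Algorithm~4. First I would invoke Lemma~\ref{lem:l11-smallsingular} with $A$ replaced by $B$: under the stated hypotheses $\sigma_{1}(B)>\sigma_{2}(B)$, $\|R_{22}\|_{2}\leq\sqrt{2(l-1)}\sigma_{2}(B)$, and $\frac{\|R_{22}\|_{2}}{|r_{11}|}<1$, this gives
\[
|l_{11}|^{-1}-\sigma_{1}^{-1}(B)\leq\frac{\sigma_{2}^{2}(B)}{\sigma_{1}^{3}(B)}\mathcal{O}\left(\frac{q^{\frac{5}{2}}\|R_{12}\|_{2}^{2}}{\left(1-\rho_{1}^{2}\right)l_{11}^{2}}\right),
\]
so that $|l_{11}|^{-1}$ is controlled by $\sigma_{1}^{-1}(B)$ plus the QLP remainder term. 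Note that here $q=\min\{l,n\}=l$ since $B\in\mathbb{R}^{l\times n}$, which is consistent with the role of $q$ in Lemma~\ref{lem:l11-smallsingular}.

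The second ingredient is to pass from $\sigma_{1}(B)$ back to $\sigma_{1}(A)$. By Theorem~\ref{thm:singular-error4} (with $j=1$), we have $\sigma_{1}(B)=\sigma_{1}(L)\geq\frac{\sigma_{1}(A)}{\rho}$ with probability at least $1-\Delta$, where $\rho=\sqrt{1+\mathcal{C}_{\Delta}^{2}(\sigma_{k+1}(A)/\sigma_{1}(A))^{2}}$. Inverting this gives $\sigma_{1}^{-1}(B)\leq\frac{\rho}{\sigma_{1}(A)}$, equivalently $\sigma_{1}^{-1}(B)-\sigma_{1}^{-1}(A)\leq\frac{\rho-1}{\sigma_{1}(A)}$. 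To get the bound in the precise form claimed, with $\sigma_1(B)$ (not $\sigma_1(A)$) in the denominator of the first term, I would instead write $\sigma_{1}^{-1}(B)-\sigma_{1}^{-1}(A)\leq\sigma_1^{-1}(B)-\frac{1}{\rho}\sigma_1^{-1}(B)=\frac{1}{\sigma_1(B)}\cdot\frac{\rho-1}{\rho}$, using $\sigma_1^{-1}(A)\geq\frac{1}{\rho}\sigma_1^{-1}(B)$ which follows directly from $\sigma_1(B)\geq\sigma_1(A)/\rho$.

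Finally I would subtract $\sigma_{1}^{-1}(A)$ from both sides of the Lemma~\ref{lem:l11-smallsingular} inequality and substitute the estimate from the previous paragraph:
\[
|l_{11}|^{-1}-\sigma_{1}^{-1}(A)=\bigl(|l_{11}|^{-1}-\sigma_{1}^{-1}(B)\bigr)+\bigl(\sigma_{1}^{-1}(B)-\sigma_{1}^{-1}(A)\bigr)\leq\frac{1}{\sigma_{1}(B)}\left(\frac{\rho-1}{\rho}\right)+\frac{\sigma_{2}^{2}(B)}{\sigma_{1}^{3}(B)}\mathcal{O}\left(\frac{q^{\frac{5}{2}}\|R_{12}\|_{2}^{2}}{\left(1-\rho_{1}^{2}\right)l_{11}^{2}}\right),
\]
which holds with probability at least $1-\Delta$ (the only probabilistic statement used is the one from Theorem~\ref{thm:singular-error4}; Lemma~\ref{lem:l11-smallsingular} is deterministic given its hypotheses on $B$). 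The main subtlety — really the only nontrivial point — is making sure the inequality $\sigma_1^{-1}(A)\geq\frac{1}{\rho}\sigma_1^{-1}(B)$ is applied in exactly the direction needed to land on the stated form rather than the looser $\frac{\rho-1}{\sigma_1(A)}$; since $A$ is assumed full rank here, $\sigma_1(A)>0$ and all reciprocals are well-defined. I would also remark that, as in Theorem~\ref{thm:matrix-error2}, the identity $B=V^{T}A$ (which holds because $A$, hence $Y_1$ and $R$, is full rank) is what makes Theorem~\ref{thm:singular-error4} directly applicable to the $B$ appearing in Lemma~\ref{lem:l11-smallsingular}.
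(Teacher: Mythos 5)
Your proposal is correct and follows essentially the same route as the paper's own proof: apply Lemma \ref{lem:l11-smallsingular} to $B$ to bound $|l_{11}|^{-1}-\sigma_1^{-1}(B)$, then use $\sigma_1(B)\geq\sigma_1(A)/\rho$ from Theorem \ref{thm:singular-error4} to control $\sigma_1^{-1}(B)-\sigma_1^{-1}(A)$ by $\sigma_1^{-1}(B)(1-\tfrac{1}{\rho})$. Your side remarks on $q=l$ and on the identity $B=V^{T}A$ (which rests on $A$ being full rank, a hypothesis carried over from Theorem \ref{thm:singular-error4}) are accurate and clarify points the paper leaves implicit.
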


\begin{proof}
From Theorem \ref{thm:singular-error4} we have $$\sigma_{1}(B)=\sigma_{1}(L)\geq\frac{\sigma_{1}(A)}{\rho},\quad\mbox{i.e.,}\quad\frac{1}{\sigma_{1}(A)}\geq\frac{1}{\rho\sigma_{1}(B)}$$
with probability not less than $1-\Delta$.
By Lemma \ref{lem:l11-smallsingular} we obtain $$|l_{11}|^{-1}\leq\sigma_{1}^{-1}(B)+\frac{\sigma_{2}^{2}(B)}{\sigma_{1}^{3}(B)}\mathcal{O}\left(\frac{q^{\frac{5}{2}}\|R_{12}\|_{2}^{2}}{\left(1-\rho_{1}^{2}\right)l_{11}^{2}}\right).$$
Subtracting $\sigma_{1}^{-1}(A)$ from the both sides of the above inequality yields
\begin{eqnarray}
|l_{11}|^{-1}-\sigma_{1}^{-1}(A)&\leq&\sigma_{1}^{-1}(B)-\frac{1}{\sigma_{1}(A)}+\frac{\sigma_{2}^{2}(B)}{\sigma_{1}^{3}(B)}\mathcal{O}\left(\frac{q^{\frac{5}{2}}\|R_{12}\|_{2}^{2}}{\left(1-\rho_{1}^{2}\right)l_{11}^{2}}\right)\nonumber\\
&\leq&\sigma_{1}^{-1}(B)-\frac{1}{\rho\sigma_{1}(B)}+\frac{\sigma_{2}^{2}(B)}{\sigma_{1}^{3}(B)}\mathcal{O}\left(\frac{q^{\frac{5}{2}}\|R_{12}\|_{2}^{2}}{\left(1-\rho_{1}^{2}\right)l_{11}^{2}}\right)\nonumber\\
&=&\sigma_{1}^{-1}(B)\left(1-\frac{1}{\rho}\right)+\frac{\sigma_{2}^{2}(B)}{\sigma_{1}^{3}(B)}\mathcal{O}\left(\frac{q^{\frac{5}{2}}\|R_{12}\|_{2}^{2}}{\left(1-\rho_{1}^{2}\right)l_{11}^{2}}\right)\nonumber
\end{eqnarray}
with probability not less than $1-\Delta$.
\end{proof}

Next, we present the following result on  the interior singular value approximation errors of  Algorithm 4.
\begin{thm}\label{thm:singular-error6}
Let $A\approx QLP^{T}$ be rank-$k$ SORQLP decomposition produced by Algorithm 4. For $B$ generated by  Algorithm 4, let $R_0$ be the $R$-factor in the pivoted QR factorization of $B$, $BP_{0}=Q_{0}R_{0}$ and let $L^T$ be the  $R$-factor in the unpivoted QR factorization of $(R_0)^T$, $R_{0}^{T}=Q_{1}L^{T}$. Suppose $\sigma_{s}(B)>\sigma_{s+1}(B)$ for some $1\leq s<k$. Partition $R_{0}$ and $L$ as
\[
R_{0}=\left[
                                   \begin{array}{cc}
                                     R_{11} & R_{12} \\
                                     0 & R_{22} \\
                                   \end{array}
                                 \right], L=\left[
                                              \begin{array}{cc}
                                                L_{11} & 0 \\
                                                L_{21} & L_{22} \\
                                              \end{array}
                                            \right],
\]
where $R_{11}$, $L_{11}\in\mathbb{R}^{s\times s}$. Assume that $\|R_{22}\|_{2}\leq\sqrt{(s+1)(l-s)}\sigma_{s+1}(B)$, $\sigma_{s}(R_{11})\geq\frac{\sigma_{s}(B)}{\sqrt{s(l-s+1)}}$, and $\frac{\|R_{22}\|_{2}}{\sigma_s(R_{11})}<1$.  Then, for any given $0<\Delta\ll1$, we have
\begin{equation}\label{thm48:42}
\frac{\sigma_{j}^{-1}(L_{11})-\sigma_{j}^{-1}(A)}{\sigma_{j}^{-1}(A)}\leq\rho-1+\rho\frac{\sigma_{s+1}^{2}(B)}{\sigma_{s}^{2}(B)}\mathcal{O}\left(\frac{q^{\frac{5}{2}}\|R_{12}\|_{2}^{2}}{(1-\rho_{1}^{2})\sigma_{s}^2(L_{11})}\right)
\end{equation}
with probability not less than $1-\Delta$  for all $i=1,\ldots,s$ and
\begin{equation}\label{thm48:43}
\frac{\sigma_{j}(L_{22})-\sigma_{s+j}(A)}{\sigma_{s+j}(A)}\leq\frac{\sigma_{s+j}(B)}{\sigma_{s+j}(A)}-1+\frac{\sigma_{s+j}(B)}{\sigma_{s+j}(A)}\left(\frac{\sigma_{s+1}(B)}{\sigma_{s}(B)}\right)^{2}\mathcal{O}\left(\frac{q^{\frac{5}{2}}\|R_{12}\|_{2}^{2}}{(1-\rho_{1}^{2})\sigma_{s}^2(L_{11})}\right),
\end{equation}
for all $j=1,\ldots,k-s$, where $\rho_{1}=\frac{\|L_{22}\|_{2}}{\sigma_{s}(L_{11})}$ and
\[
\rho=\sqrt{1+\mathcal{C}_{\Delta}^{2}\left(\frac{\sigma_{k+1}(A)}{\sigma_{j}(A)}\right)^{2}}
\]
with
\[
\mathcal{C}_{\Delta}=\frac{e\sqrt{l}}{p+1}\left(\frac{2}{\Delta}\right)^{\frac{1}{p+1}}\left(\sqrt{n-k}
+\sqrt{l}+\sqrt{2\log\frac{2}{\Delta}}\right).
\]
\end{thm}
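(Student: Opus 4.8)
The plan is to follow the template of the proof of Theorem~\ref{thm:singular-error3}, but exploiting the fact that for Algorithm~4, with $A$ of full rank (as in the SORQLP setting), one has the \emph{exact} identity $B=V^{T}A$, so the perturbation term $B-V^{T}A$ and hence the additive constant $\mathcal{C}$ vanish; only the multiplicative factor $\rho$ remains.

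The first step is to record, from the proof of Theorem~\ref{thm:singular-error4}, that $B=(R^{T})^{\dag}R^{T}V^{T}A=V^{T}A$, so that $\sigma_{j}(B)\le\sigma_{j}(A)$ for $1\le j\le k$ and, with probability at least $1-\Delta$, $\sigma_{j}(A)\le\rho\,\sigma_{j}(B)$ for $1\le j\le k$, where $\rho$ is as in the statement. This is the only point at which probability enters, and everything below is deterministic on that event.

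The second step is to apply Lemma~\ref{lem:block-error} to $B$ in the role of ``$A$'' there, with $s$ in the role of ``$k$'' and $l$ in the role of ``$q$''. The hypotheses $\sigma_{s}(B)>\sigma_{s+1}(B)$, $\|R_{22}\|_{2}\le\sqrt{(s+1)(l-s)}\,\sigma_{s+1}(B)$, $\sigma_{s}(R_{11})\ge\sigma_{s}(B)/\sqrt{s(l-s+1)}$, and $\|R_{22}\|_{2}/\sigma_{s}(R_{11})<1$ are exactly what that lemma needs; clearing denominators in its two conclusions gives, with $E:=\mathcal{O}\!\left(\frac{q^{5/2}\|R_{12}\|_{2}^{2}}{(1-\rho_{1}^{2})\sigma_{s}^{2}(L_{11})}\right)$ and $\rho_{1}=\|L_{22}\|_{2}/\sigma_{s}(L_{11})$,
\[
\sigma_{j}^{-1}(L_{11})-\sigma_{j}^{-1}(B)\le\frac{1}{\sigma_{j}(B)}\Big(\frac{\sigma_{s+1}(B)}{\sigma_{s}(B)}\Big)^{2}E
\quad\text{and}\quad
\sigma_{j}(L_{22})-\sigma_{s+j}(B)\le\frac{\sigma_{s+j}(B)\,\sigma_{s+1}^{2}(B)}{\sigma_{s}^{2}(B)}E .
\]
Then for \eqref{thm48:42} I would add and subtract $\sigma_{j}^{-1}(A)$, divide by $\sigma_{j}^{-1}(A)$, and use $\sigma_{j}(A)/\sigma_{j}(B)\le\rho$ in two places — once to turn $\sigma_{j}(A)/\sigma_{j}(B)-1$ into $\rho-1$, once to bound the coefficient $\sigma_{j}(A)/\sigma_{j}(B)$ of the $E$-term by $\rho$ (legitimate since $E\ge0$) — obtaining the asserted bound with probability at least $1-\Delta$; for \eqref{thm48:43} I would add and subtract $\sigma_{s+j}(A)$, divide by $\sigma_{s+j}(A)$, and leave $\sigma_{s+j}(B)/\sigma_{s+j}(A)$ unsimplified, so that estimate is deterministic.

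The main difficulty is purely organizational: one must match the index and dimension labels of Lemma~\ref{lem:block-error} to the $(B,s,l)$ setting, track the direction of each division carefully (the $L_{11}$ estimate being phrased in reciprocals), and avoid inflating the $\mathcal{O}(\cdot)$ factor when the $\sigma_{j}(B)$ prefactors are replaced via $\sigma_{j}(A)\le\rho\,\sigma_{j}(B)$. Because there is no perturbation term to control, unlike in Algorithm~3, no genuine obstacle arises beyond this careful bookkeeping.
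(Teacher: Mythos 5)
Your proposal is correct and follows essentially the same route as the paper: both proofs invoke Lemma~\ref{lem:block-error} with $B$ in the role of the input matrix, use the exact identity $B=V^{T}A$ together with the two-sided bound $\sigma_{j}(B)\le\sigma_{j}(A)\le\rho\,\sigma_{j}(B)$ from Theorem~\ref{thm:singular-error4}, and then carry out the same add-and-subtract/divide manipulations — applying $\sigma_{j}(A)/\sigma_{j}(B)\le\rho$ twice for \eqref{thm48:42} and leaving the ratio $\sigma_{s+j}(B)/\sigma_{s+j}(A)$ intact in \eqref{thm48:43} so that bound holds deterministically. Your explicit ``clear the denominators'' reformulation of Lemma~\ref{lem:block-error} is merely a bookkeeping restatement of what the paper also uses; there is no substantive difference.
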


\begin{proof}
From Theorem \ref{thm:singular-error4} we have $\sigma_{j}(B)\leq\sigma_{j}(A)$, i.e., $\frac{1}{\sigma_{j}(A)}\leq\frac{1}{\sigma_{j}(B)}$ for all $j=1,\ldots, k$. Similarly, $\sigma_{j}(A)\leq\rho\sigma_{j}(B)$ with probability not less than $1-\Delta$  for all $j=1,\ldots, k$.
From Lemma \ref{lem:block-error} we have, for $j=1,\ldots,k-s$,
\begin{equation*}
\begin{split}
\sigma_{j}(L_{22})-\sigma_{s+j}(A)&\leq\sigma_{s+j}(B)-\sigma_{s+j}(A)+\frac{\sigma_{s+1}^{2}(B)\sigma_{s+j}(B)}{\sigma_{s}^{2}(B)}\mathcal{O}\left(\frac{q^{\frac{5}{2}}\|R_{12}\|_{2}^{2}}{(1-\rho_{1}^{2})\sigma_{s}^2(L_{11})}\right).\\
\end{split}
\end{equation*}
Dividing $\sigma_{s+j}(A)$ on the both side of the above inequality yields
\begin{eqnarray}
\frac{\sigma_{j}(L_{22})-\sigma_{s+j}(A)}{\sigma_{s+j}(A)}&\leq&\frac{\sigma_{s+j}(B)-\sigma_{s+j}(A)}{\sigma_{s+j}(A)}+\frac{\sigma_{s+1}^{2}(B)\sigma_{s+j}(B)}{\sigma_{s+j}(A)\sigma_{s}^{2}(B)}\mathcal{O}\left(\frac{q^{\frac{5}{2}}\|R_{12}\|_{2}^{2}}{(1-\rho_{1}^{2})\sigma_{s}^2(L_{11})}\right)\nonumber\\
&\leq&\frac{\sigma_{s+j}(B)}{\sigma_{s+j}(A)}-1+\frac{\sigma_{s+j}(B)}{\sigma_{s+j}(A)}\left(\frac{\sigma_{s+1}(B)}{\sigma_{s}(B)}\right)^{2}\mathcal{O}\left(\frac{q^{\frac{5}{2}}\|R_{12}\|_{2}^{2}}{(1-\rho_{1}^{2})\sigma_{s}^2(L_{11})}\right),\nonumber
\end{eqnarray}
for all $j=1,\ldots,k-s$. This shows that \eqref{thm48:43} holds.

On the other hand, by Lemma \ref{lem:bound-Omega} we have, for $j=1,\ldots,s$,
$$\sigma_{j}^{-1}(L_{11})-\sigma_{j}^{-1}(A)\leq\sigma_{j}^{-1}(B)-\sigma_{j}^{-1}(A)+\frac{\sigma_{s+1}^{2}(B)}{\sigma_{s}^{2}(B)\sigma_{j}(B)}\mathcal{O}\left(\frac{q^{\frac{5}{2}}\|R_{12}\|_{2}^{2}}{(1-\rho_{1}^{2})\sigma_{s}^2(L_{11})}\right).$$
Dividing $\sigma_{j}^{-1}(A)$ on the both sides of the above inequality gives rise to
\begin{eqnarray}
\frac{\sigma_{j}^{-1}(L_{11})-\sigma_{j}^{-1}(A)}{\sigma_{j}^{-1}(A)}&\leq&\frac{\sigma_{j}^{-1}(B)-\sigma_{j}^{-1}(A)}{\sigma_{j}^{-1}(A)}+\frac{\sigma_{j}(A)\sigma_{s+1}^{2}(B)}{\sigma_{s}^{2}(B)\sigma_{j}(B)}\mathcal{O}\left(\frac{q^{\frac{5}{2}}\|R_{12}\|_{2}^{2}}{(1-\rho_{1}^{2})\sigma_{s}^2(L_{11})}\right)\nonumber\\
&\leq&\frac{\sigma_{j}^{-1}(B)-(\rho\sigma_{j}(B))^{-1}}{(\rho\sigma_{j}(B))^{-1}}+\rho\frac{\sigma_{j}(B)\sigma_{s+1}^{2}(B)}{\sigma_{s}^{2}(B)\sigma_{j}(B)}\mathcal{O}\left(\frac{q^{\frac{5}{2}}\|R_{12}\|_{2}^{2}}{(1-\rho_{1}^{2})\sigma_{s}^2(L_{11})}\right)\nonumber\\
&\leq&\rho-1+\rho\frac{\sigma_{s+1}^{2}(B)}{\sigma_{s}^{2}(B)}\mathcal{O}\left(\frac{q^{\frac{5}{2}}\|R_{12}\|_{2}^{2}}{(1-\rho_{1}^{2})\sigma_{s}^2(L_{11})}\right)\nonumber
\end{eqnarray}
with probability not less than $1-\Delta$ for all $j=1,\ldots,s$. This shows that \eqref{thm48:42} holds.
\end{proof}

\begin{cor}
Under the same assumptions of Theorem \ref{thm:singular-error6}, if $s>k$,  then
\begin{equation*}
\begin{split}
\frac{\sigma_{j}^{-1}(L_{11})-\sigma_{j}^{-1}(A)}{\sigma_{j}^{-1}(A)}\leq\rho-1+\rho\frac{\sigma_{s+1}^{2}(B)}{\sigma_{s}^{2}(B)}\mathcal{O}\left(\frac{q^{\frac{5}{2}}\|R_{12}\|_{2}^{2}}{(1-\rho_{1}^{2})\sigma_{s}^2(L_{11})}\right)
\end{split}
\end{equation*}
with probability not less than $1-\Delta$ for all $j=1,\ldots,k$. In particular, if $s=k$, then
\begin{equation*}
\begin{split}
\frac{\sigma_{j}^{-1}(L_{11})-\sigma_{j}^{-1}(A)}{\sigma_{j}^{-1}(A)}\leq\rho-1+\rho\frac{\sigma_{k+1}^{2}(B)}{\sigma_{k}^{2}(B)}\mathcal{O}\left(\frac{q^{\frac{5}{2}}\|R_{12}\|_{2}^{2}}{(1-\rho_{1}^{2})\sigma_{k}^2(L_{11})}\right)
\end{split}
\end{equation*}
with probability not less than $1-\Delta$  for all $j=1,\ldots,k$.
\end{cor}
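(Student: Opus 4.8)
The plan is to reuse, essentially verbatim, the chain of inequalities that yields the inverse-singular-value bound \eqref{thm48:42} in the proof of Theorem \ref{thm:singular-error6}, observing that nothing in that chain genuinely needed $s<k$: it only needed each index $j$ to lie simultaneously in $\{1,\dots,s\}$ (so that Lemma \ref{lem:block-error}, applied to $B$ with the block partition at $s$, is in force) and in $\{1,\dots,k\}$ (so that Theorem \ref{thm:singular-error4} supplies the two-sided comparison of $\sigma_j(B)$ with $\sigma_j(A)$). When $s>k$, or when $s=k$, we have $\{1,\dots,k\}\subseteq\{1,\dots,s\}$, so the full range $j=1,\dots,k$ becomes admissible; the $L_{22}$ part of Theorem \ref{thm:singular-error6} is then vacuous (there are no indices $j=1,\dots,k-s$), which is why the corollary records only the bound on $\sigma_j^{-1}(L_{11})$.

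First I would apply Lemma \ref{lem:block-error} to $B$ with $R_{11},L_{11}\in\mathbb{R}^{s\times s}$ --- legitimate under the stated hypotheses $\|R_{22}\|_2\le\sqrt{(s+1)(l-s)}\,\sigma_{s+1}(B)$, $\sigma_s(R_{11})\ge\sigma_s(B)/\sqrt{s(l-s+1)}$ and $\|R_{22}\|_2/\sigma_s(R_{11})<1$ --- to obtain, for every $j=1,\dots,k$,
\[
\sigma_{j}^{-1}(L_{11})-\sigma_{j}^{-1}(A)\leq\sigma_{j}^{-1}(B)-\sigma_{j}^{-1}(A)+\frac{\sigma_{s+1}^{2}(B)}{\sigma_{s}^{2}(B)\,\sigma_{j}(B)}\,\mathcal{O}\!\left(\frac{q^{\frac{5}{2}}\|R_{12}\|_{2}^{2}}{(1-\rho_{1}^{2})\sigma_{s}^2(L_{11})}\right).
\]
Next I would invoke Theorem \ref{thm:singular-error4}: since $A$ is full rank, $B=V^{T}A$ exactly, so $\sigma_j(B)\le\sigma_j(A)$ holds deterministically and $\sigma_j(A)\le\rho\,\sigma_j(B)$ holds with probability at least $1-\Delta$, for all $j=1,\dots,k$, where $\rho=\sqrt{1+\mathcal{C}_\Delta^{2}\bigl(\sigma_{k+1}(A)/\sigma_j(A)\bigr)^{2}}$. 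Dividing the displayed inequality by $\sigma_j^{-1}(A)$, rewriting the leading term as $\sigma_j(A)/\sigma_j(B)-1$, and substituting $\sigma_j(A)\le\rho\,\sigma_j(B)$ both there and in the coefficient $\sigma_j(A)\sigma_{s+1}^{2}(B)/(\sigma_s^{2}(B)\sigma_j(B))$ collapses the right-hand side to $\rho-1+\rho\,\frac{\sigma_{s+1}^{2}(B)}{\sigma_s^{2}(B)}\,\mathcal{O}(\cdots)$, which is precisely the claimed bound for $j=1,\dots,k$, valid with probability at least $1-\Delta$. The $s=k$ statement is then literally the specialization, with $\sigma_{s+1}(B)/\sigma_s(B)$ and $\sigma_s(L_{11})$ becoming $\sigma_{k+1}(B)/\sigma_k(B)$ and $\sigma_k(L_{11})$.

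Since the argument is a reindexing of an already-verified computation, I do not anticipate a substantive obstacle. The only care needed is bookkeeping: checking that $s\le l$ so that the blocks $R_{11},L_{11}\in\mathbb{R}^{s\times s}$ of the QLP factors of the $l\times n$ matrix $B$ are well defined; reading the spectral-gap hypothesis now at the index $s\ge k$ rather than $s<k$; and applying the probabilistic estimate $\sigma_j(A)\le\rho\,\sigma_j(B)$ only for $j\le k$, which is automatic here. No randomness beyond the single event underlying Theorem \ref{thm:singular-error4} enters, so the success probability stays at $1-\Delta$.
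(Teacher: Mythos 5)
Your proposal is correct and follows the natural (and evidently intended) route: the corollary is a reindexing of the proof of Theorem \ref{thm:singular-error6}, where Lemma \ref{lem:block-error} applied at block size $s$ gives the $\sigma_j^{-1}(L_{11})$ inequality for $j\le s$, Theorem \ref{thm:singular-error4} supplies $\sigma_j(A)\le\rho\,\sigma_j(B)$ for $j\le k$, and when $s\ge k$ the intersection of admissible indices is exactly $\{1,\dots,k\}$, making the $L_{22}$ part vacuous. Your bookkeeping remarks (that $s\le l$ must hold for the block partition of the $l\times n$ factors to exist, and that only the single probabilistic event from Theorem \ref{thm:singular-error4} is invoked, so the probability stays at $1-\Delta$) are precisely the points worth checking, and they check out.
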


\section{Numerical experiments}
In this section, we give some numerical examples to illustrate the effectiveness of Algorithms 3--4. We also compare our algorithms with Algorithm 1 from \cite{Stewart1999QLP} and Algorithm 2 from \cite{Xiang2020RQLP}. All experiment are performed  by using {\tt MATLAB 2019b} on a personal laptop with an Intel(R) CPU i5-10210U of 1.6 GHz and 8 GB of RAM.
\begin{exa}\label{ex:51} \cite{Tropp2017sketching}(synthetic input matrix)
Let $A=U_n\Sigma V_n^{T}\in\mathbb{R}^{n\times n}$, where $U_n,V_n\in\mathbb{R}^{n\times n}$ are orthogonal matrices generated by using the built-in functions {\tt randn} and {\tt orth} and $\Sigma$ as follows:
\begin{itemize}
\item Polynomially decaying spectrum (\textsf{pds}): $$\Sigma=\diag(1,\ldots,1,2^{-s},3^{-s},\ldots,(n-t+1)^{-s});$$
\item Exponentially decaying spectrum (\textsf{eds}): $$\Sigma=\diag(1,\ldots,1,2^{-s},2^{-2s},\ldots,2^{-(n-t)s}).$$
\end{itemize}
Here, the constants $t,s>0$ control the rank of the significant part of the matrix and the rate of decay, respectively. We report our numerical results for  $n=2000$.
\end{exa}

\begin{exa}\label{ex:52}\cite{Hansen2007tools}(ill-conditioned matrix)
The ill-conditioned matrix $A$ is generated by discretization of the Fredholm integral equation of the first kind with square integrable kernel: $$\int_{z_{1}}^{z_{2}}K(y,z)f(z)dz=g(y),~y_{1}\leq y\leq y_{2},$$ where $y_{1},y_{2},z_{1}$, and $z_{2}$ are some constants. The Galerkin discretization method is employed and the examples \textsf{heat} and \textsf{deriv2} are utilized. In our numerical experiments, we set the test matrix size to $2000\times 2000$.
\end{exa}

In Algorithm 3 and Algorithm 4, the rank of the output matrix is $l=k+p$, hence the low-rank representation of $A$ is $$\hat{A}=Q(:,1:k)L(1:k,1:k)(P(:,1:k))^{T}.$$
Then the relative matrix approximation error is given by $$E_{F}=\frac{\|A-\hat{A}\|_{F}}{\|A\|_{F}}$$ and the absolute and relative  singular value approximation errors  can be measured by $$ AE_{\sigma_j}=|\sigma_{j}(A)-|l_{jj}|| \quad\mbox{and}\quad
RE_{\sigma_j}=\frac{|\sigma_{j}(A)-|l_{jj}||}{\sigma_{j}(A)},\quad \forall 1\le j\le k.$$
Specifically, for the singular value approximation error, we take $k=860$. The parameters used in Algorithms 2--4 are listed in Table \ref{tab:1}.

\begin{table}
\caption{Parameters used in Algorithms 2--4}
\label{tab:1}
\begin{center}
\begin{tabular}{c|c}
  \hline
  Method & Parameters \\
  \hline
  Algorithm 2 & oversampling parameter $p=5$, $l=k+p$ \\
  Algorithm 3 & oversampling parameter $p=5$, $l_{1}=k+p$, $l_{2}=2k$ \\
  Algorithm 4 & oversampling parameter $p=5$, $l=k+p$ \\
  \hline
\end{tabular}
\end{center}
\end{table}

\subsection{Comparison of runing time}
Figure \ref{fig:CPU} shows the running time of the four algorithms with different target ranks. From Figure \ref{fig:CPU}, we can find that the three randomized method, i.e., Algorithms 2--4 are always much faster than Algorithm 1. In the comparison of randomized algorithms, Algorithm 2 are slightly faster than Algorithm 3 and Algorithm4. However, we note that the time of Figure \ref{fig:CPU} is not included data communication time. Thus, the total computational cost of two single-pass algorithms, i.e., Algorithms 3--4 are cheaper than Algorithm 2. Especially when matrix size is relatively large, the overall speed difference of algorithms is more obvious.

\begin{figure}[H]
\centering
\subfigure[Example \ref{ex:51}: \textsf{pds}($t=30, s=2$)]{
\includegraphics[width=6.5cm]{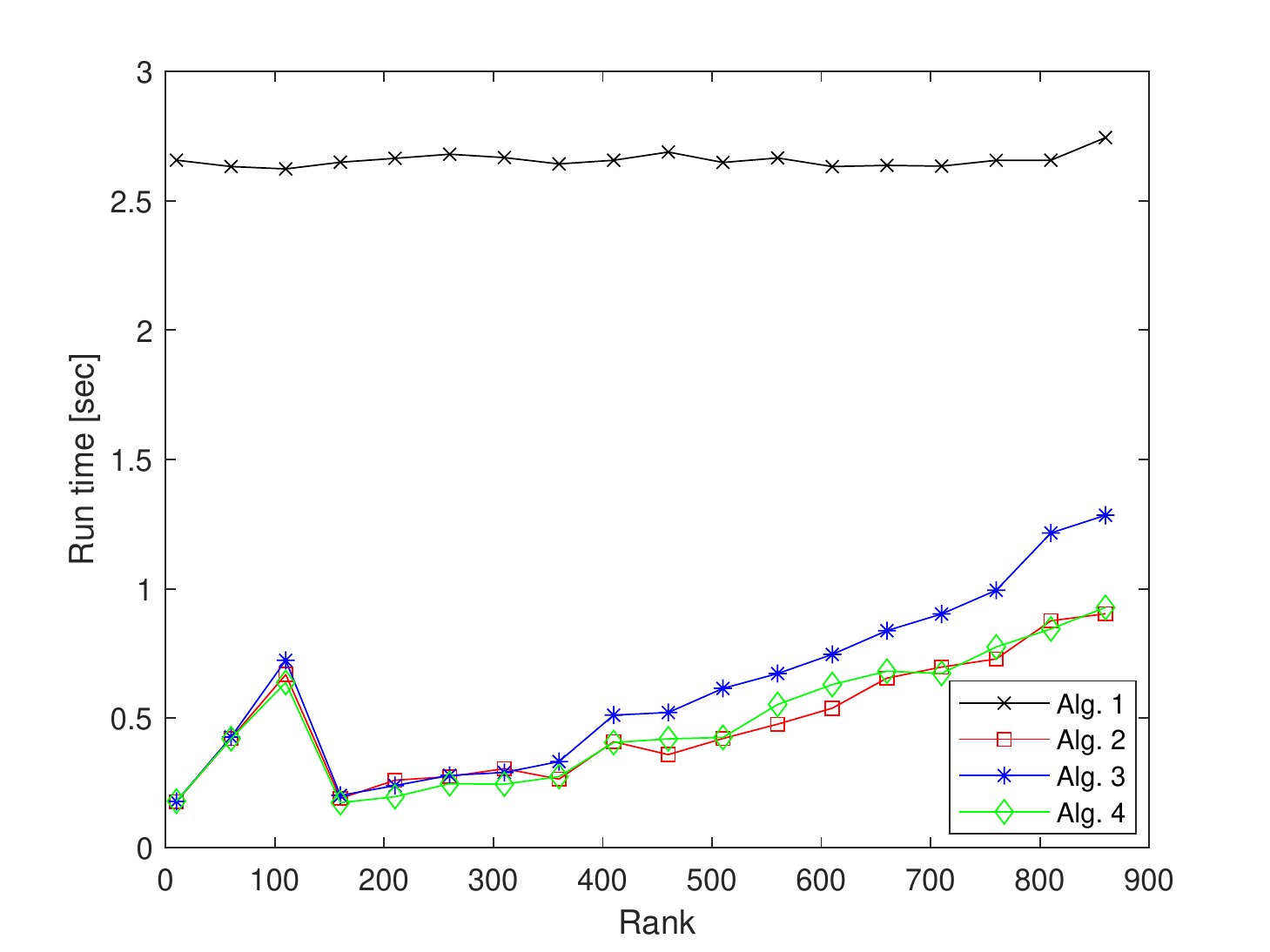}
}
\quad
\subfigure[Example \ref{ex:51}: \textsf{eds}($t=30, s=0.25$)]{
\includegraphics[width=6.5cm]{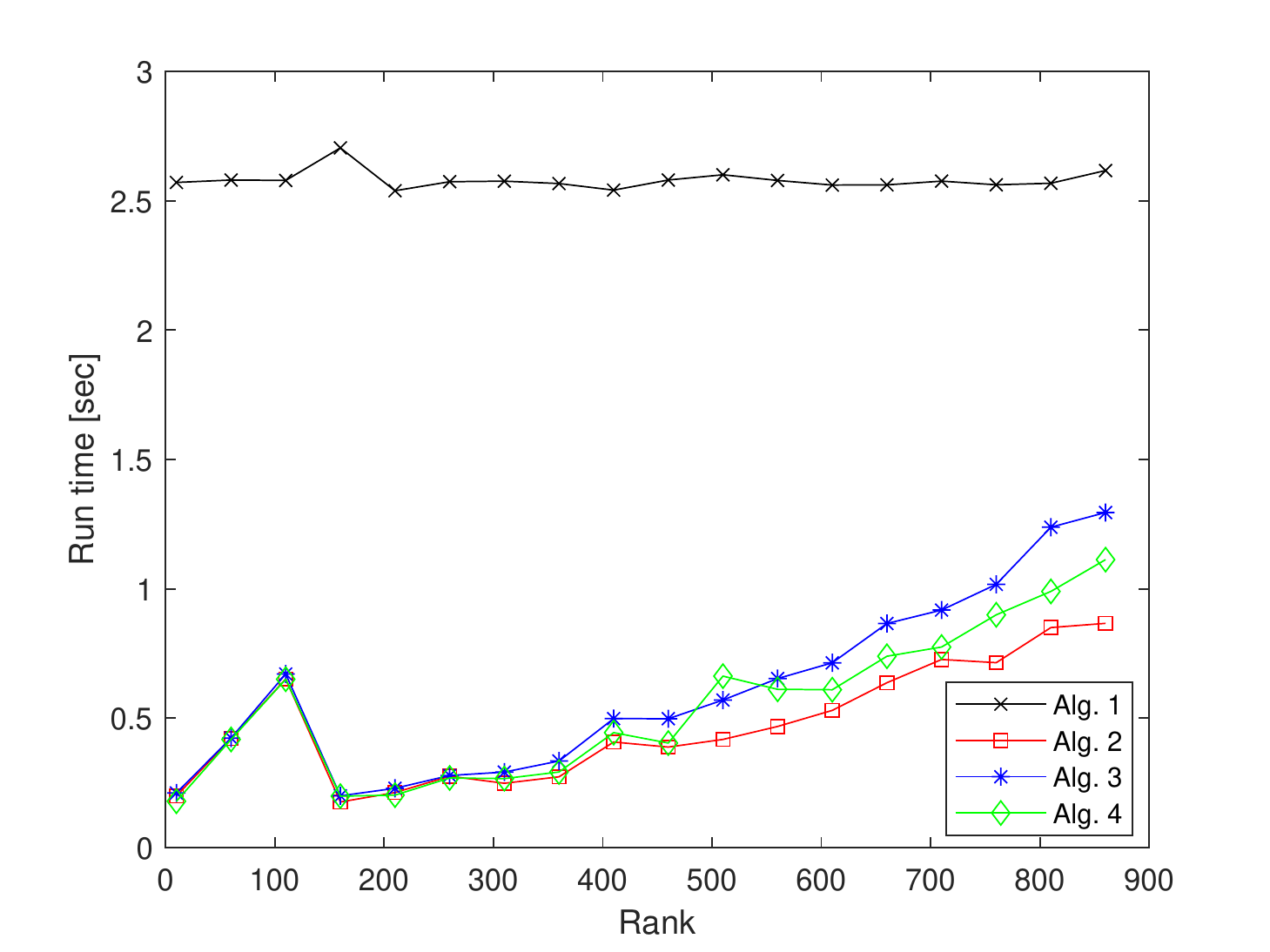}
}
\quad
\subfigure[Example \ref{ex:52}: \textsf{heat}]{
\includegraphics[width=6.5cm]{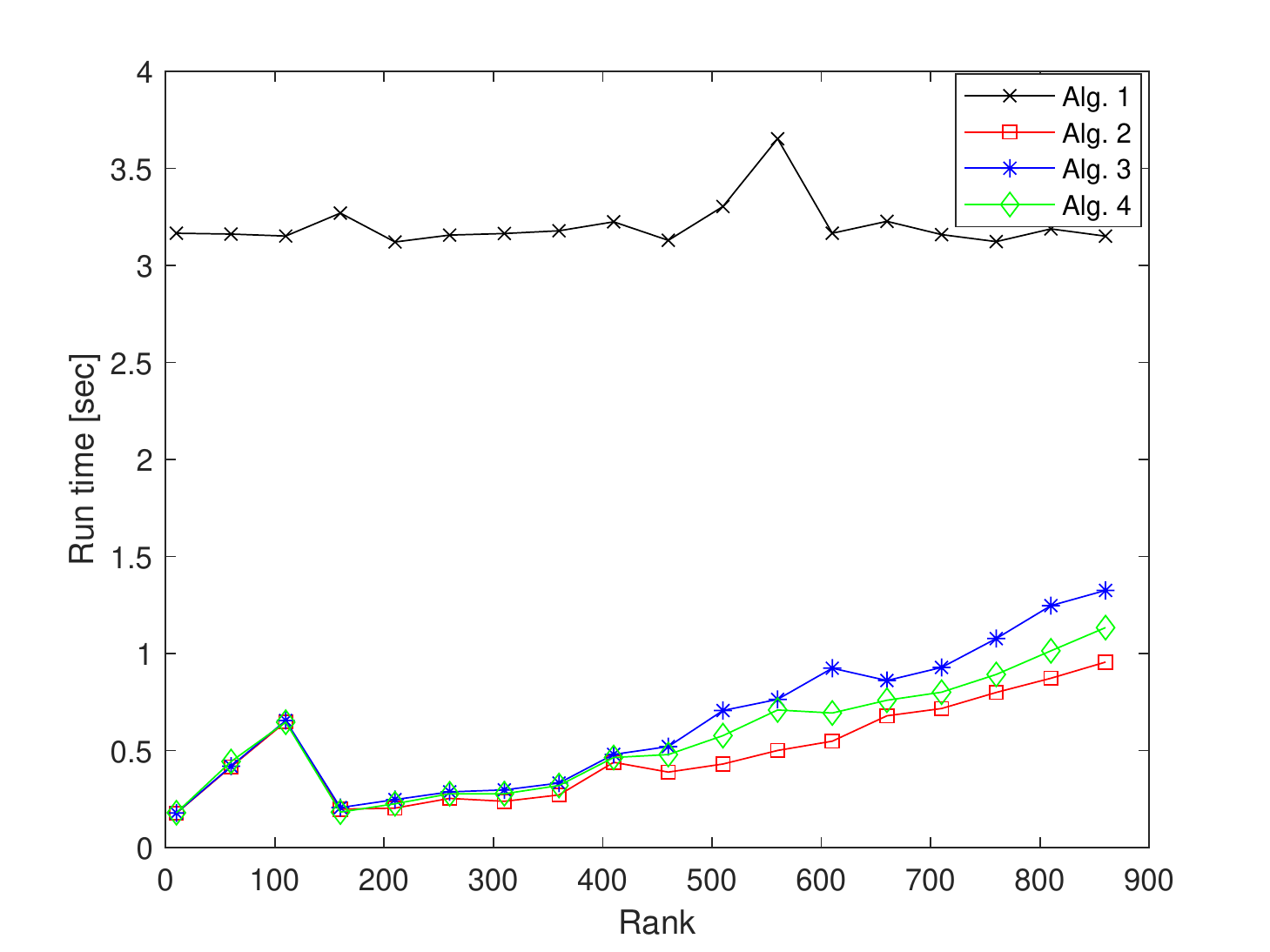}
}
\quad
\subfigure[Example \ref{ex:52}: \textsf{deriv2}]{
\includegraphics[width=6.5cm]{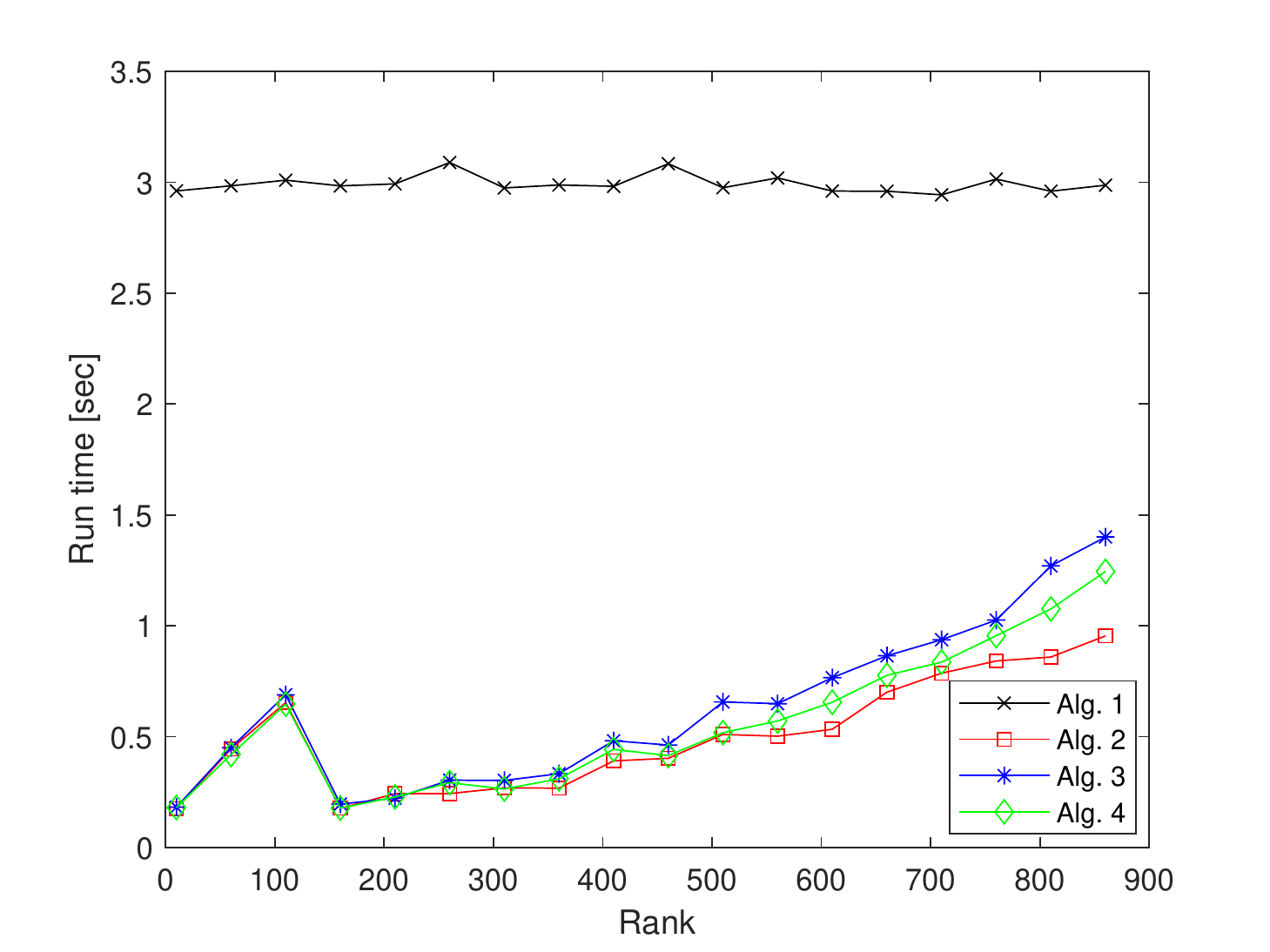}
}
\caption{Running time for a fixed $A$.}
\label{fig:CPU}
\end{figure}

\subsection{Comparison of matrix approximation error}
Figure \ref{fig:Merror} shows the trend of matrix approximation error with different target rank. We can find a very interesting phenomenon in Figure \ref{fig:Merror}. For Algorithms 1--3, in terms of matrix approximation error, they all show good performance in four numerical examples, and the error decreases with the increase of matrix rank, which is consistent with our analysis in this paper. Since Algorithm 4 algorithm is only suitable for full rank cases, we only consider example \textsf{pds} and \textsf{deriv2}. In these two numerical examples, the matrix approximation error of Algorithm 4 is not significantly different from that of the other three algorithms, and it is even slightly better than Algorithm 4.

\begin{figure}[H]
\centering
\subfigure[Example \ref{ex:51}: \textsf{pds}($t=30, s=2$)]{
\includegraphics[width=6.5cm]{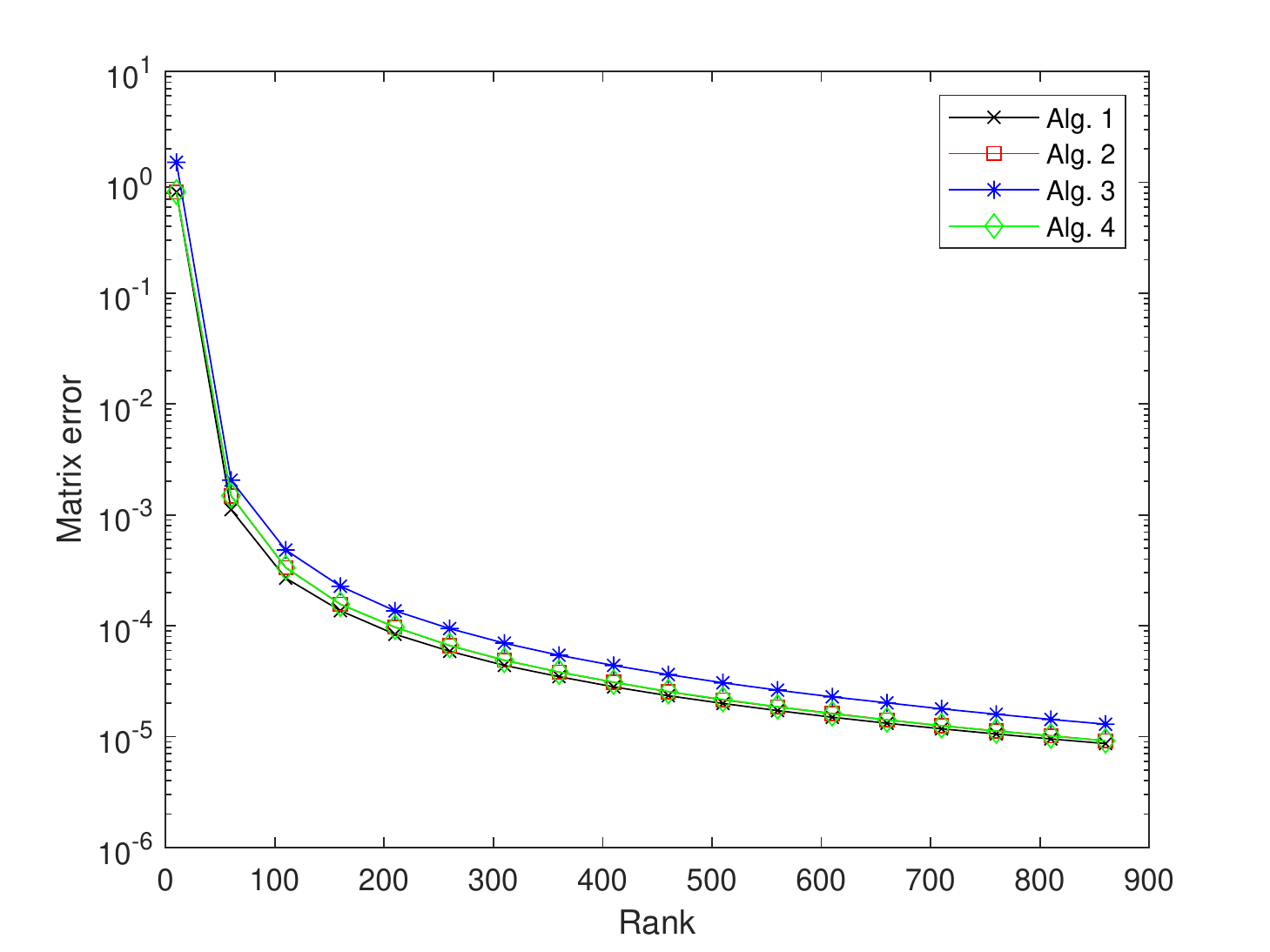}
}
\quad
\subfigure[Example \ref{ex:51}: \textsf{eds}($t=30, s=0.25$)]{
\includegraphics[width=6.5cm]{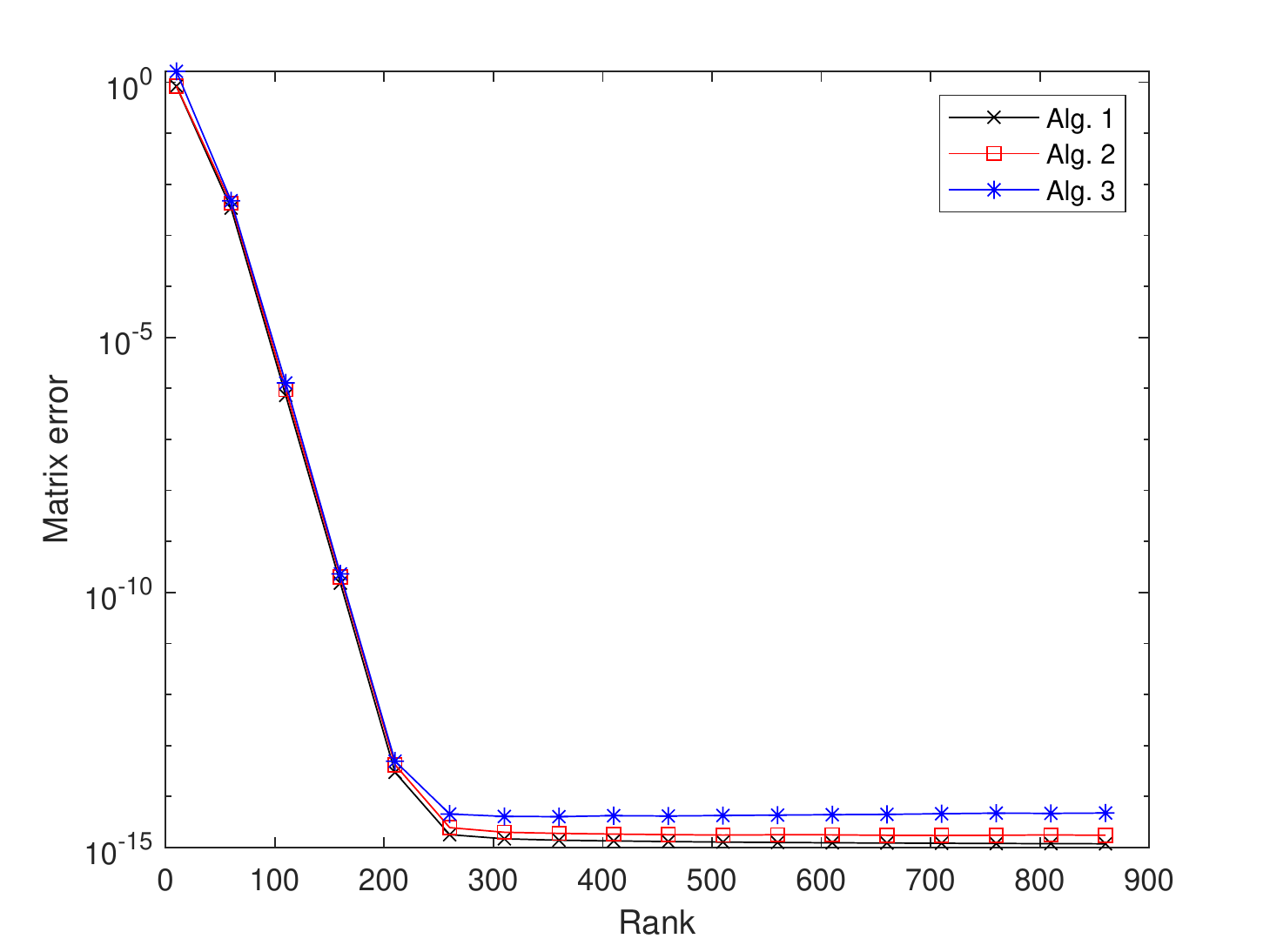}
}
\quad
\subfigure[Example \ref{ex:52}: \textsf{heat}]{
\includegraphics[width=6.5cm]{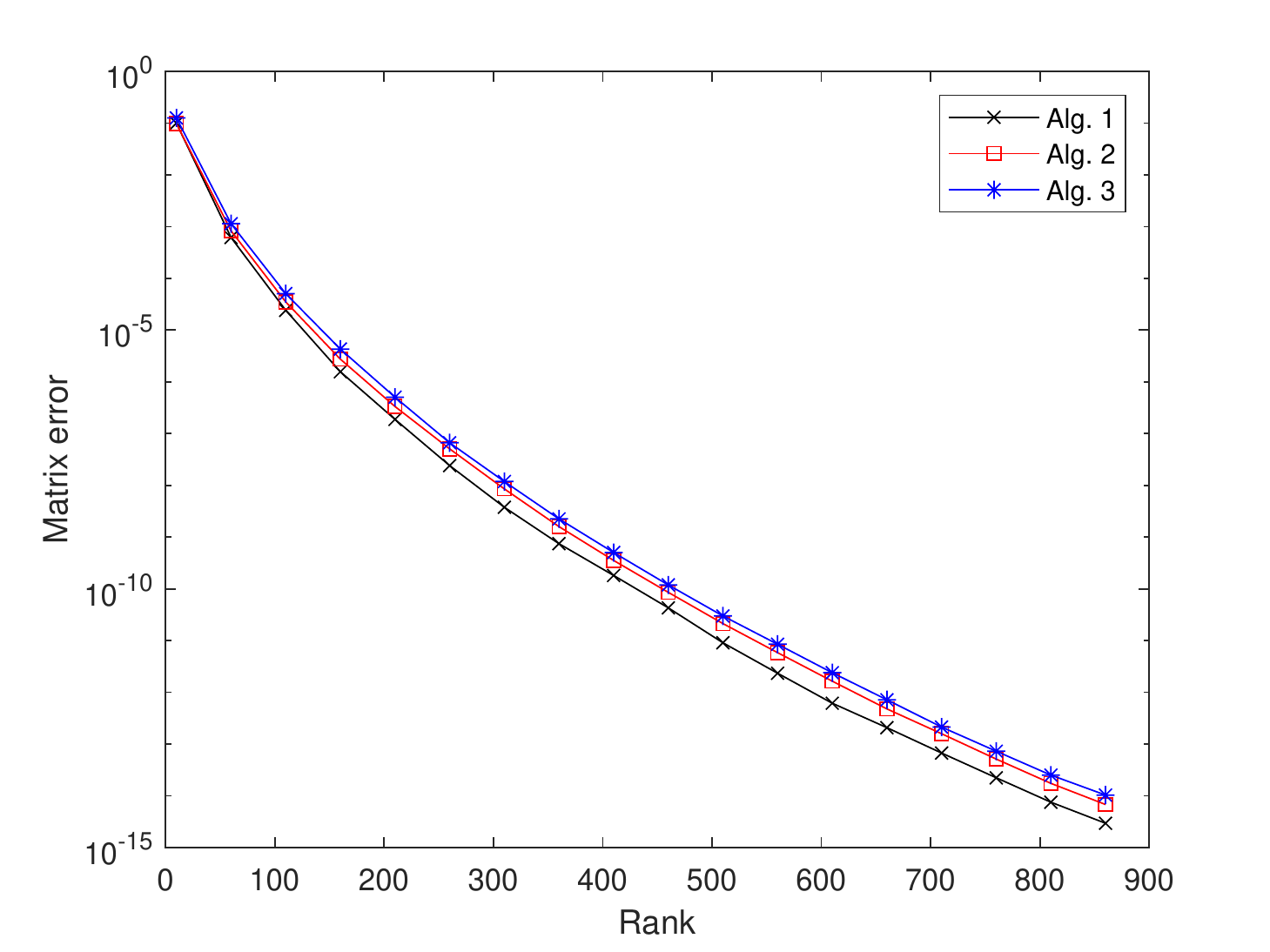}
}
\quad
\subfigure[Example \ref{ex:52}: \textsf{deriv2}]{
\includegraphics[width=6.5cm]{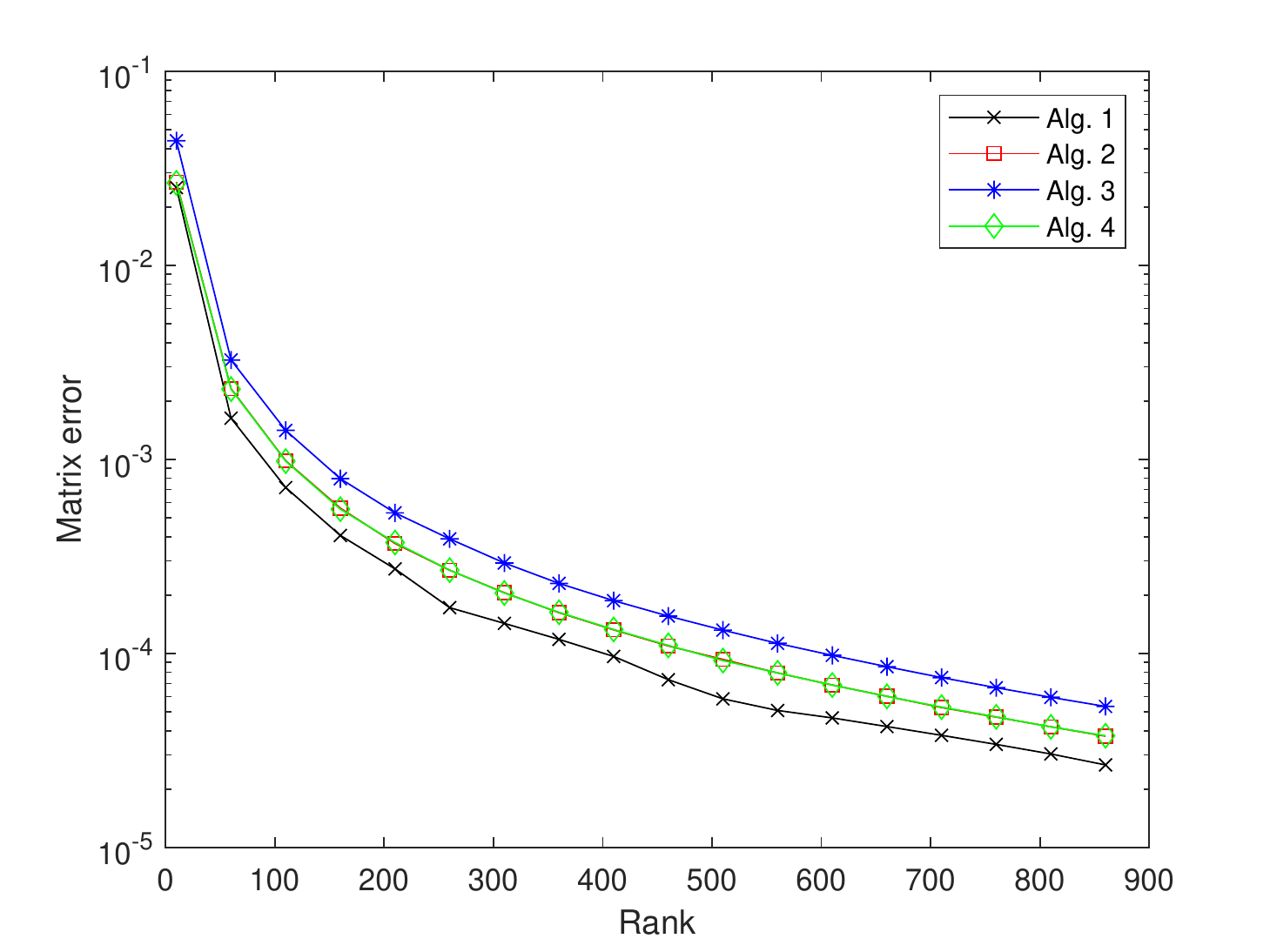}
}
\caption{Matrix approximation error for a fixed $A$.}
\label{fig:Merror}
\end{figure}

\subsection{Comparison of singular value approximation error}
In Figures \ref{fig:Serror_a}--\ref{fig:Serror_top}, we plot the curves of singular value absolute errors, singular value relative errors and top 30 singular value relative errors for different algorithms, respectively. In terms of absolute error and relative error of singular value, as shown in Figure \ref{fig:Serror_a} and \ref{fig:Serror_ar}, Algorithm 3 has a very good approximation effect on matrix singular value. Most of the approximation effect is close to Algorithm 1, and even some singular value approximation effect is better. For Algorithm 4, we know that the matrix approximation effect will be worse in the case of not full rank, but the approximation effect of this algorithms for large singular values of matrix is similar to other algorithms. In the example \textsf{eds}, the relative error of the singular value suddenly increases because the singular value itself is smaller than the machine accuracy. In Figure \ref{fig:Serror_top}, we find that the relative errors of top 30 singular values of the two single-pass randomized algorithms are very close to those of Algorithm 1, and even the singular value relative errors is exactly the same as Algorithm 1 and Algorithm 2.
\begin{figure}[H]
\centering
\subfigure[Example  \ref{ex:51}: \textsf{pds}($t=30, s=2$)]{
\includegraphics[width=6.5cm]{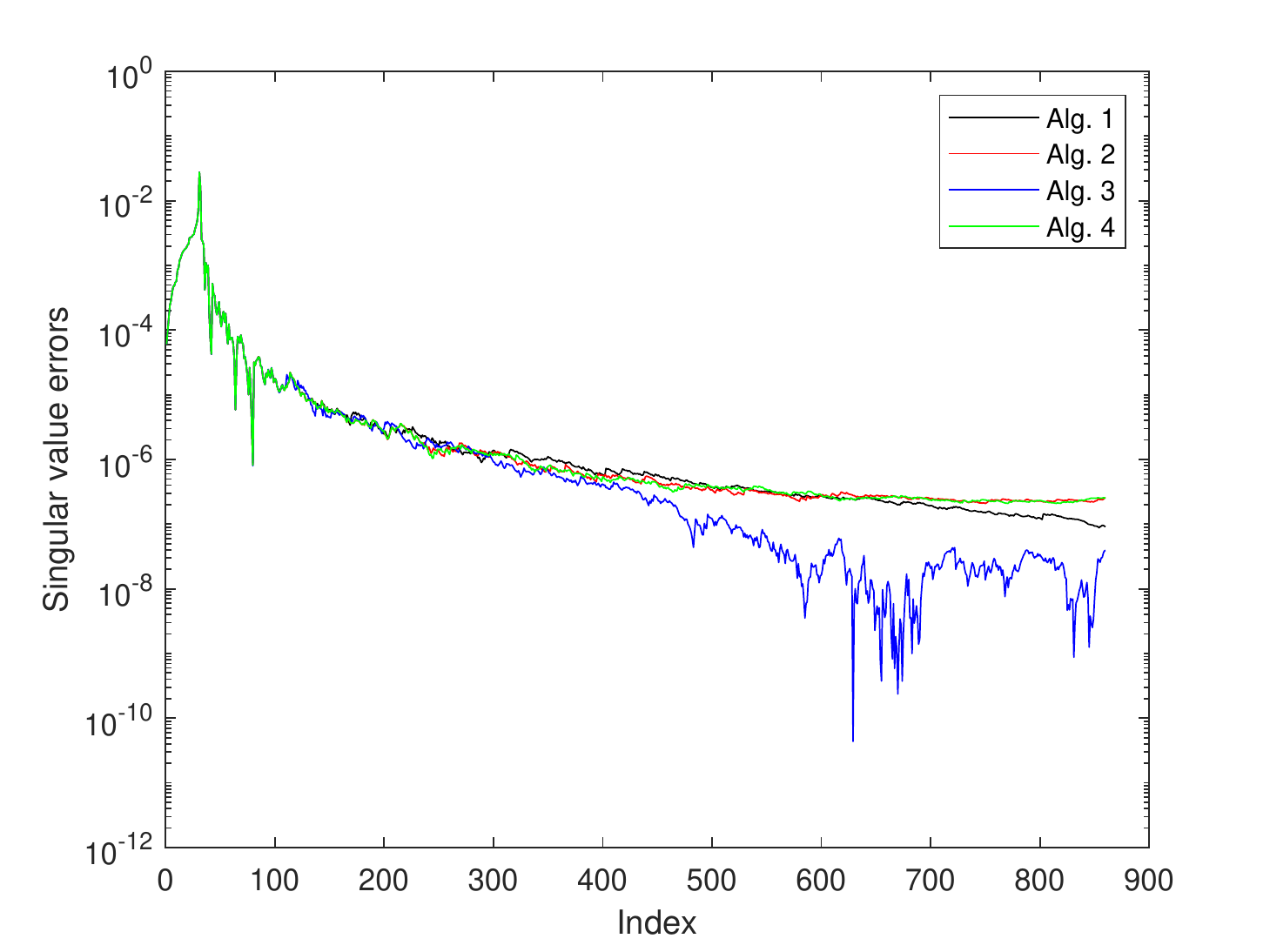}
}
\quad
\subfigure[Example  \ref{ex:51}: \textsf{eds}($t=30, s=0.25$)]{
\includegraphics[width=6.5cm]{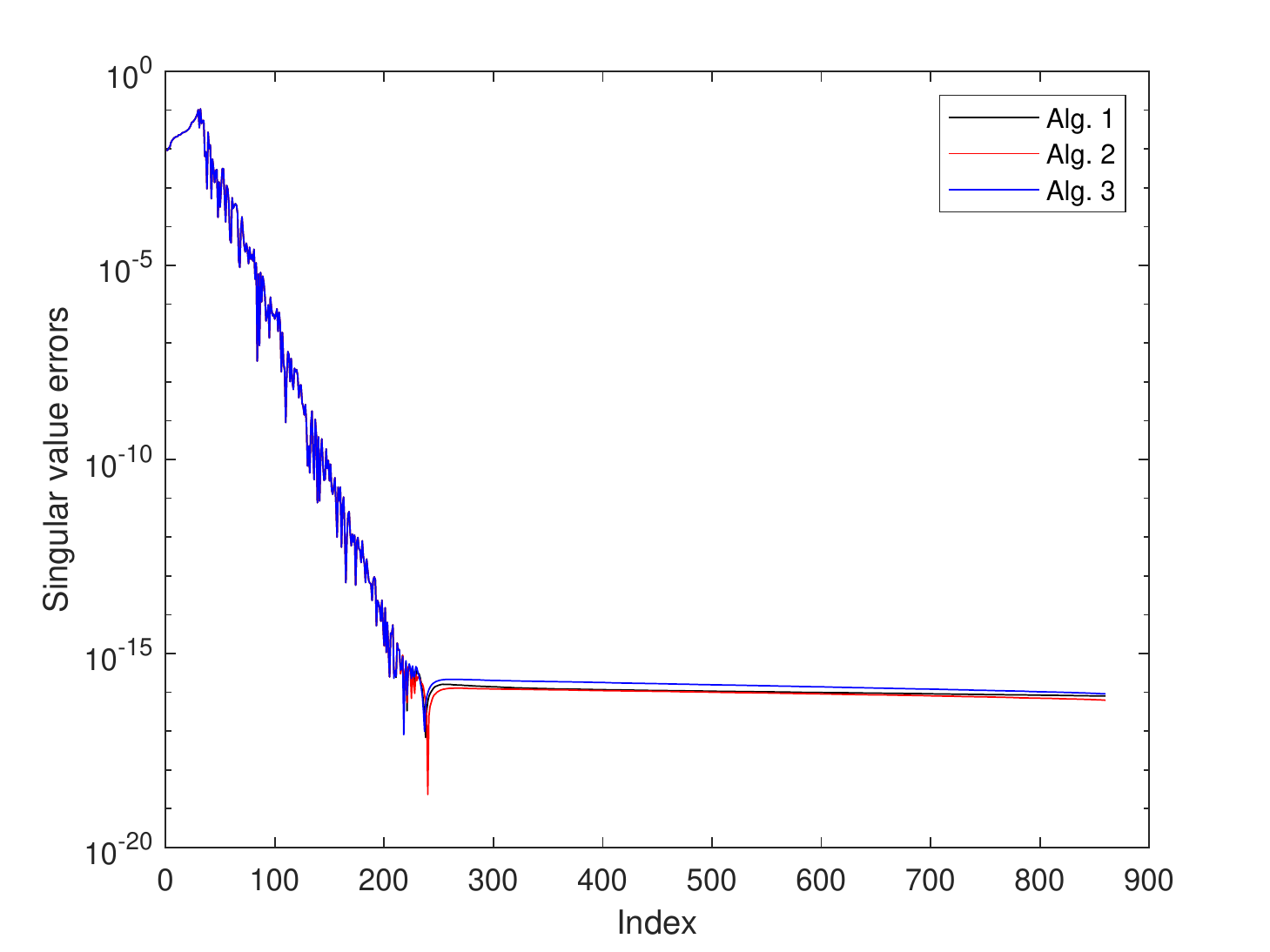}
}
\quad
\subfigure[Example  \ref{ex:52}: \textsf{heat}]{
\includegraphics[width=6.5cm]{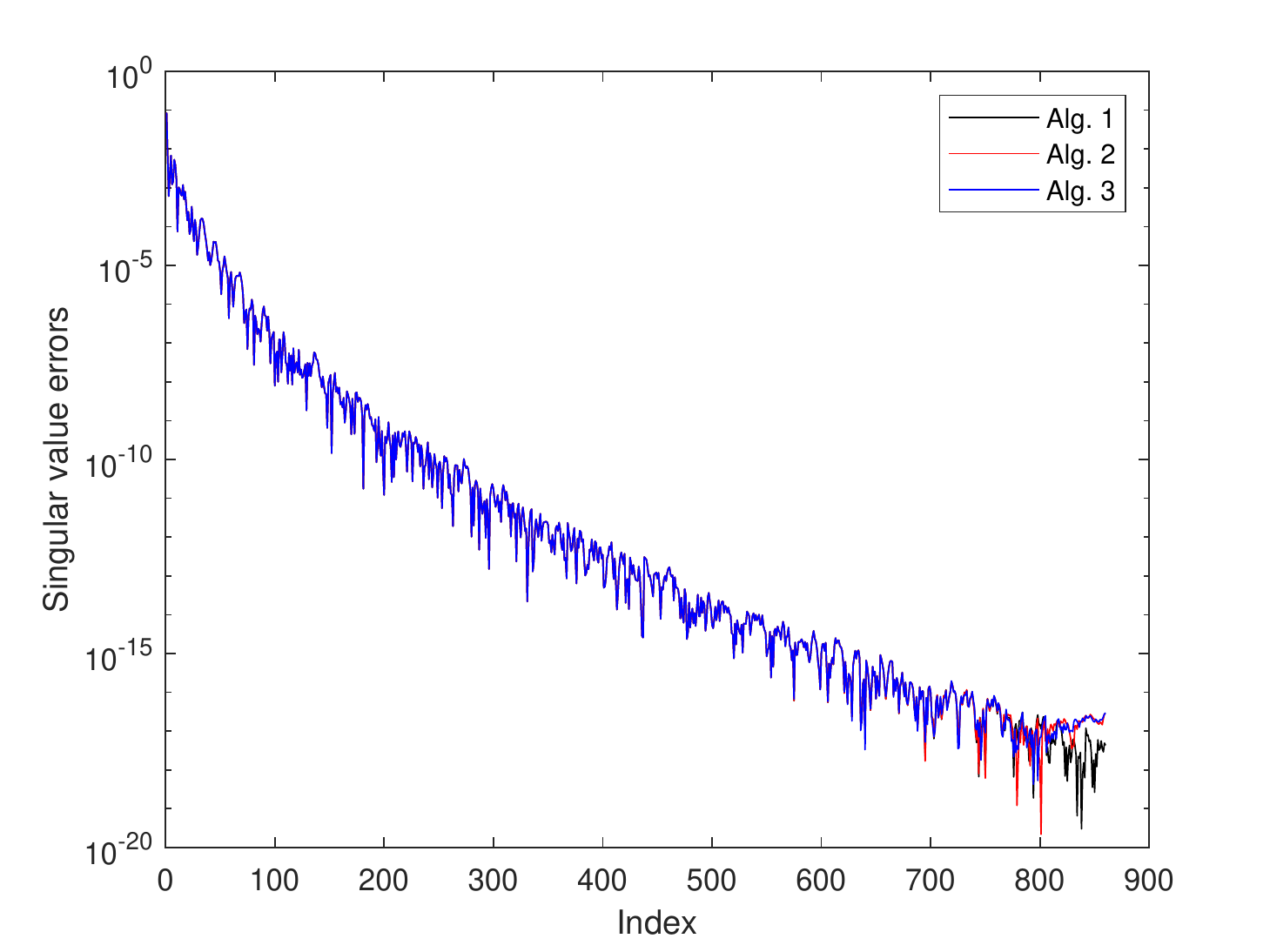}
}
\quad
\subfigure[Example  \ref{ex:52}: \textsf{deriv2}]{
\includegraphics[width=6.5cm]{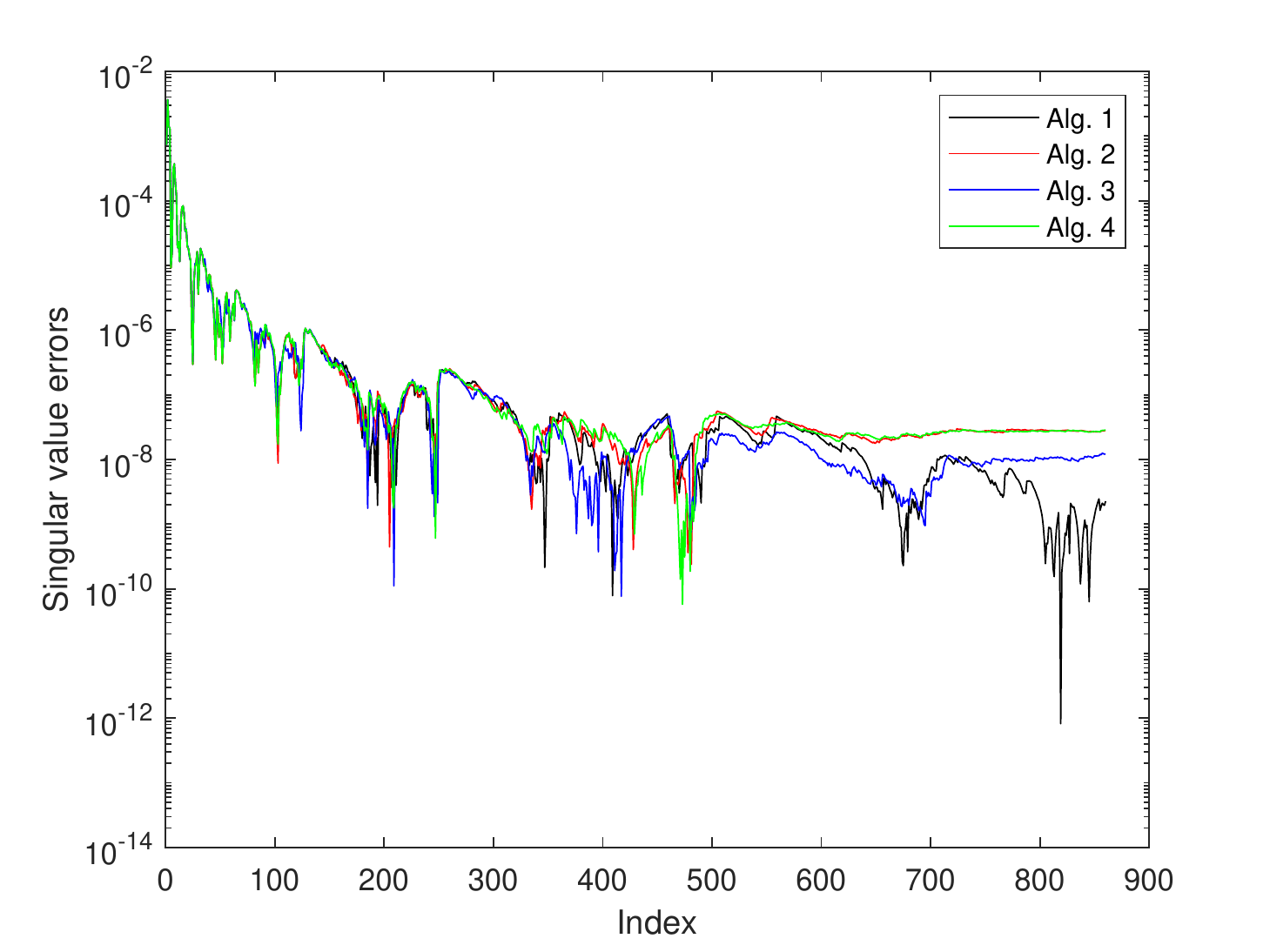}
}
\caption{Singular value absolute errors for a fixed $A$.}
\label{fig:Serror_a}
\end{figure}

\begin{figure}[H]
\centering
\subfigure[Example \ref{ex:51}: \textsf{pds}($t=30, s=2$)]{
\includegraphics[width=6.5cm]{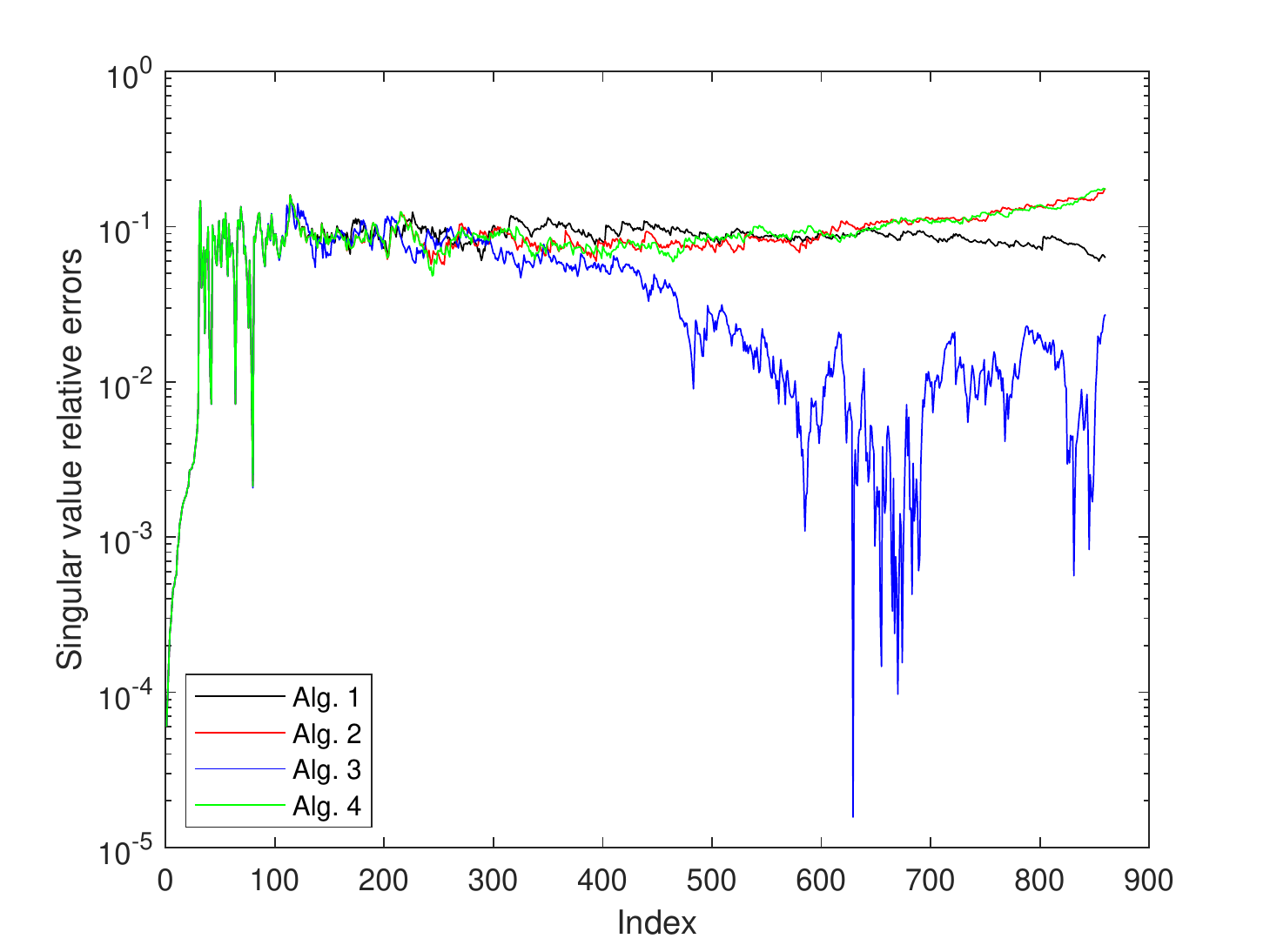}
}
\quad
\subfigure[Example \ref{ex:51}: \textsf{eds}($t=30, s=0.25$)]{
\includegraphics[width=6.5cm]{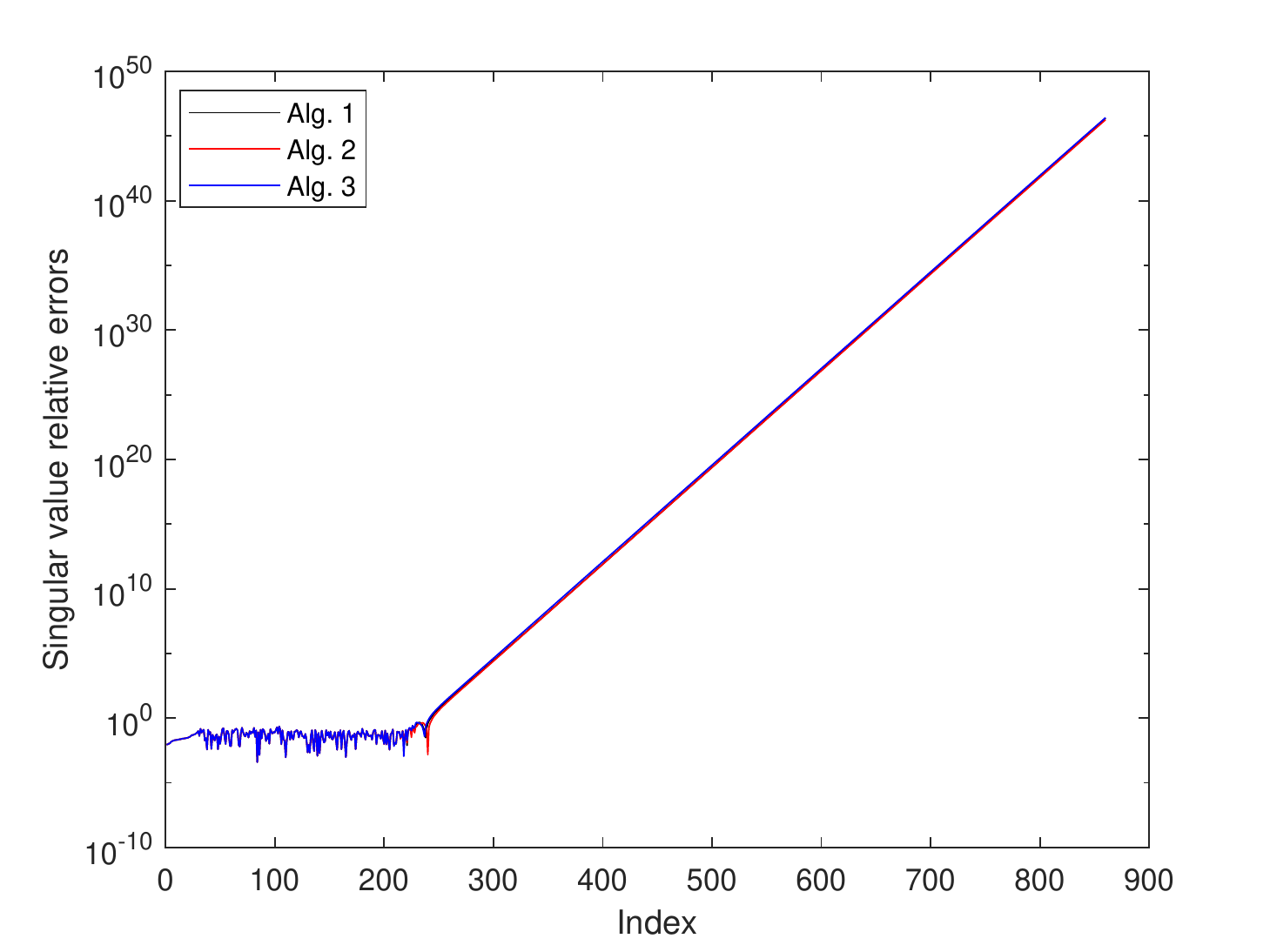}
}
\quad
\subfigure[Example \ref{ex:52}: \textsf{heat}]{
\includegraphics[width=6.5cm]{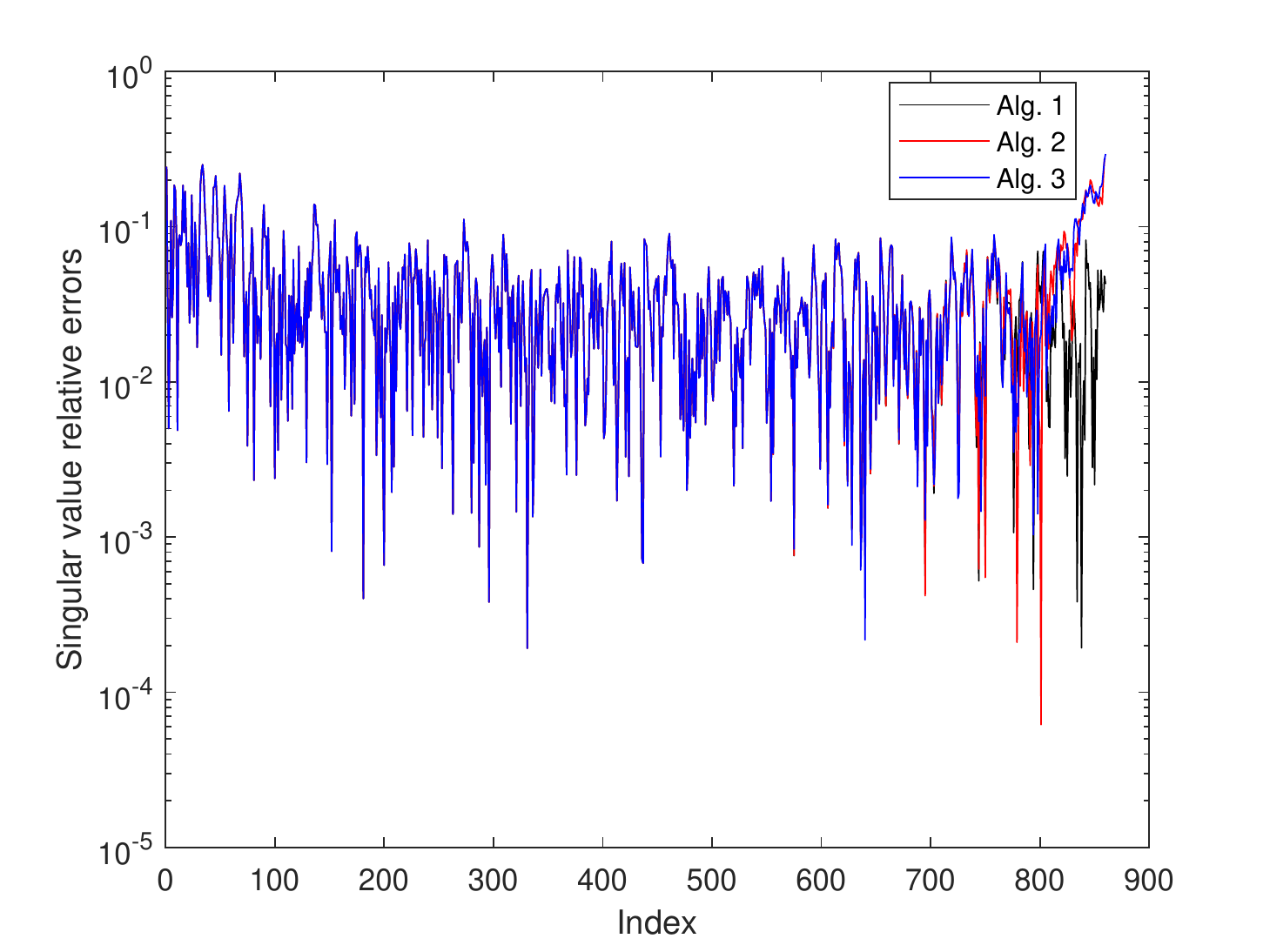}
}
\quad
\subfigure[Example \ref{ex:52}: \textsf{deriv2}]{
\includegraphics[width=6.5cm]{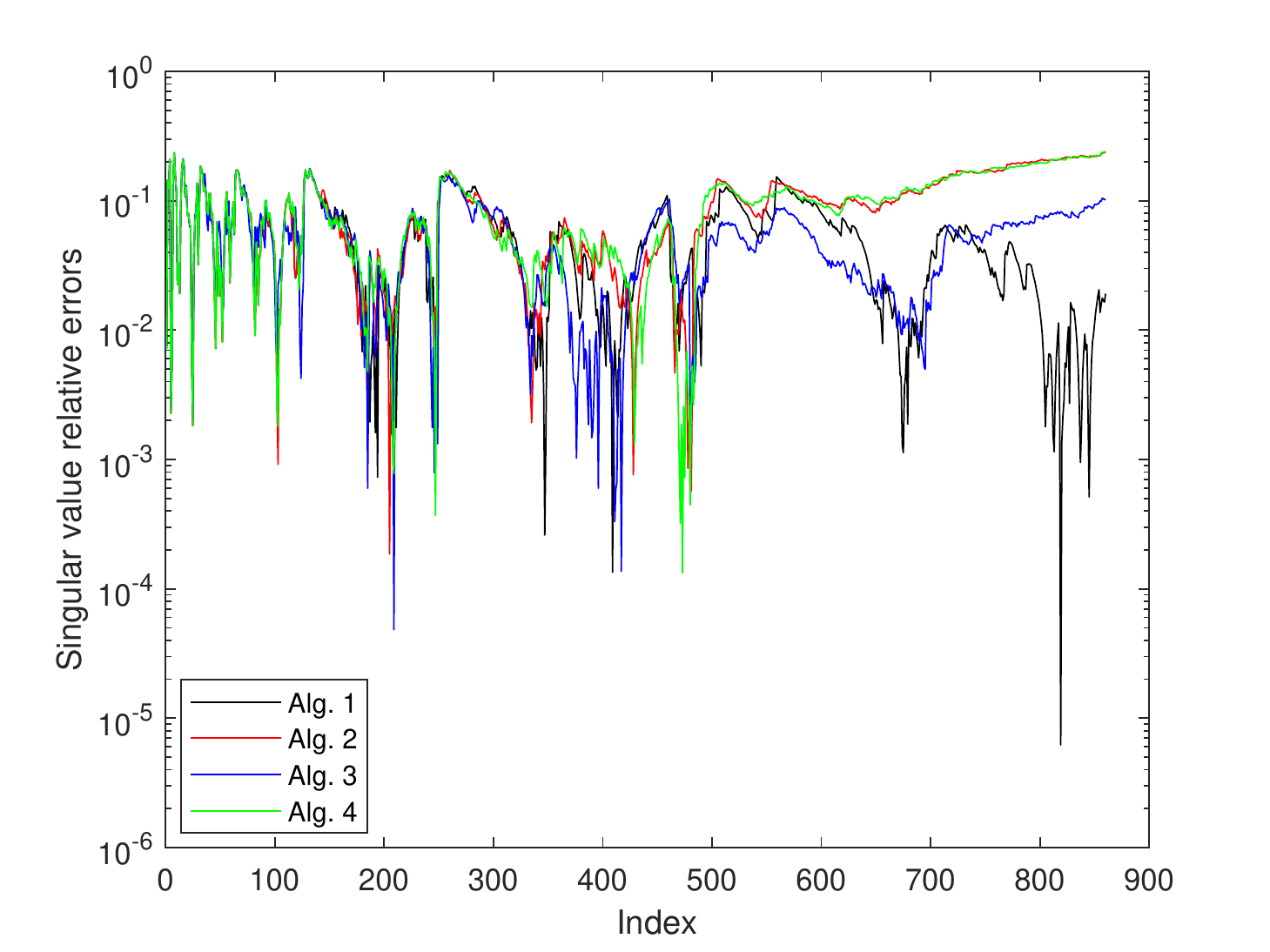}
}
\caption{Singular value relative errors for a fixed $A$.}
\label{fig:Serror_ar}
\end{figure}

\begin{figure}[H]
\centering
\subfigure[Example \ref{ex:51}: \textsf{pds}($t=30, s=2$)]{
\includegraphics[width=6.5cm]{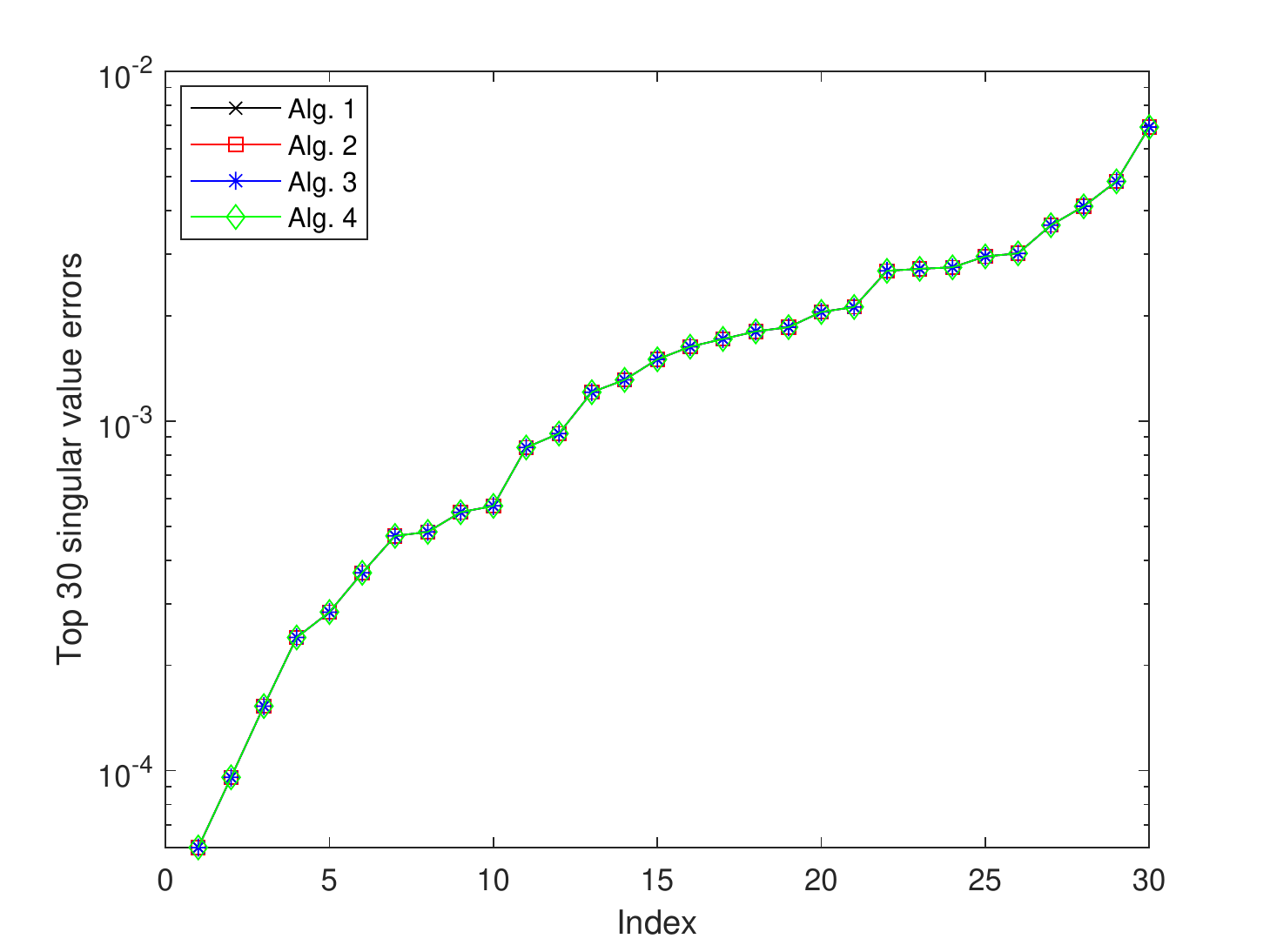}
}
\quad
\subfigure[Example \ref{ex:51}: \textsf{eds}($t=30, s=0.25$)]{
\includegraphics[width=6.5cm]{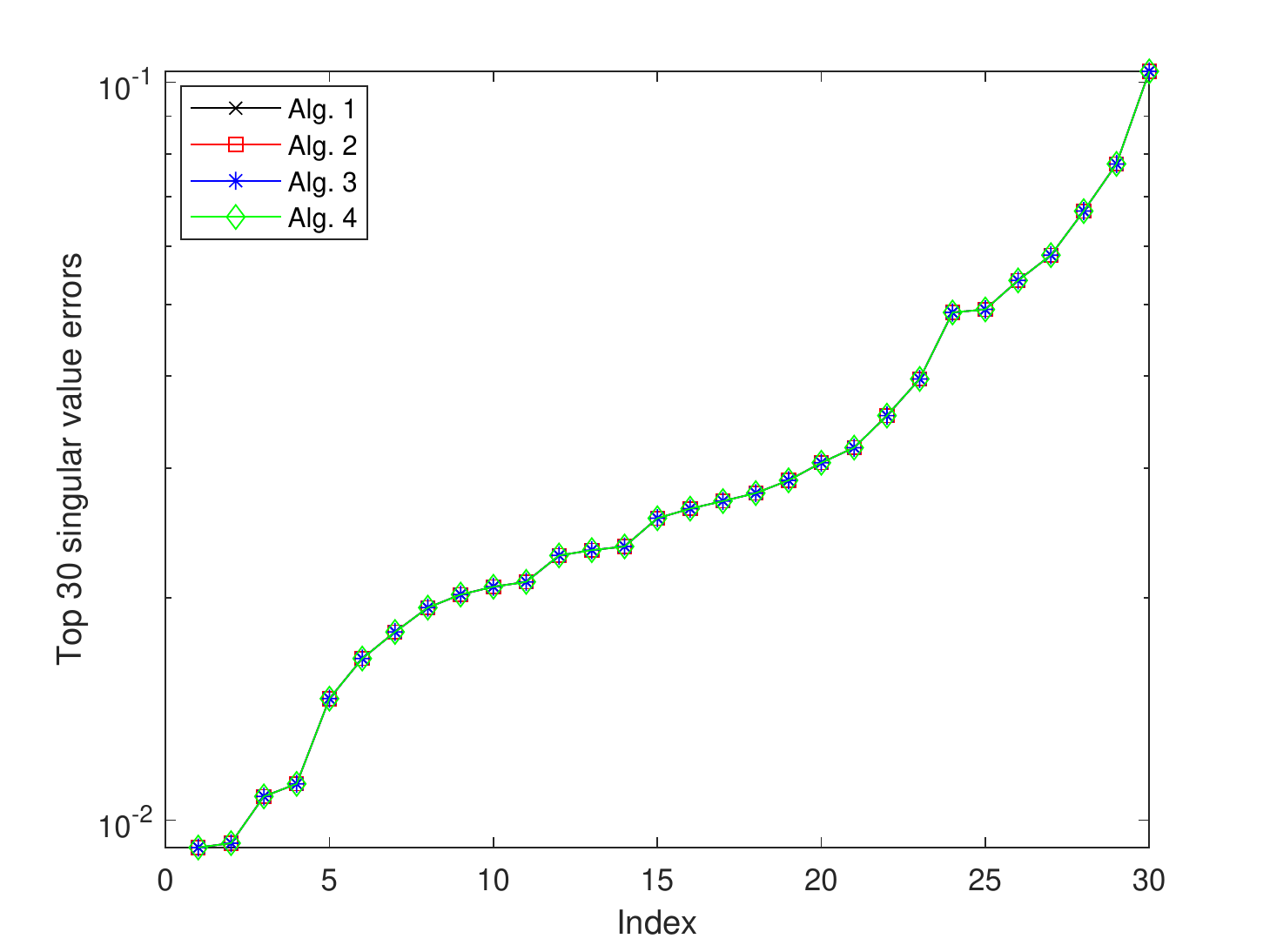}
}
\quad
\subfigure[Example \ref{ex:52}: \textsf{heat}]{
\includegraphics[width=6.5cm]{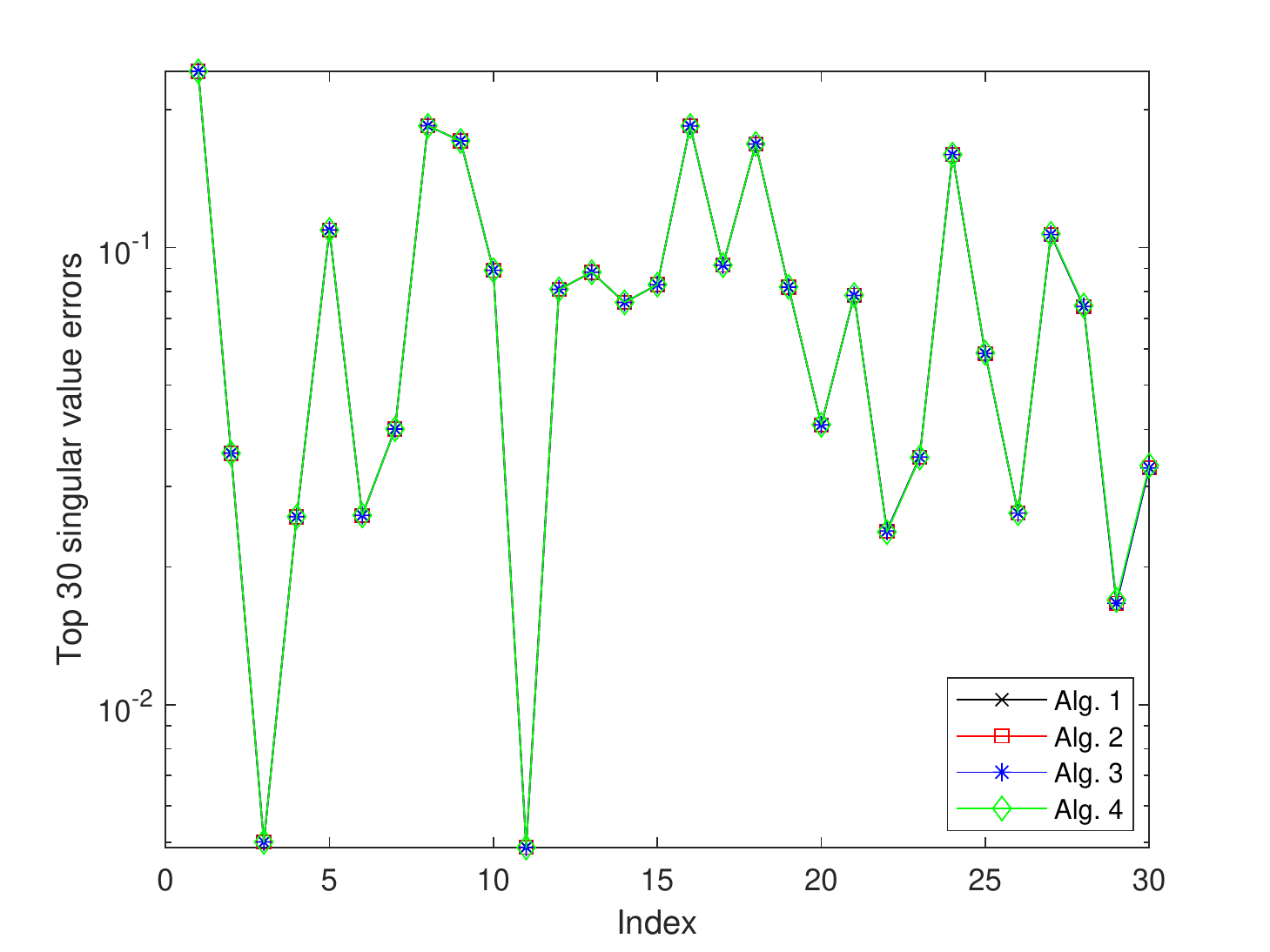}
}
\quad
\subfigure[Example \ref{ex:52}: \textsf{deriv2}]{
\includegraphics[width=6.5cm]{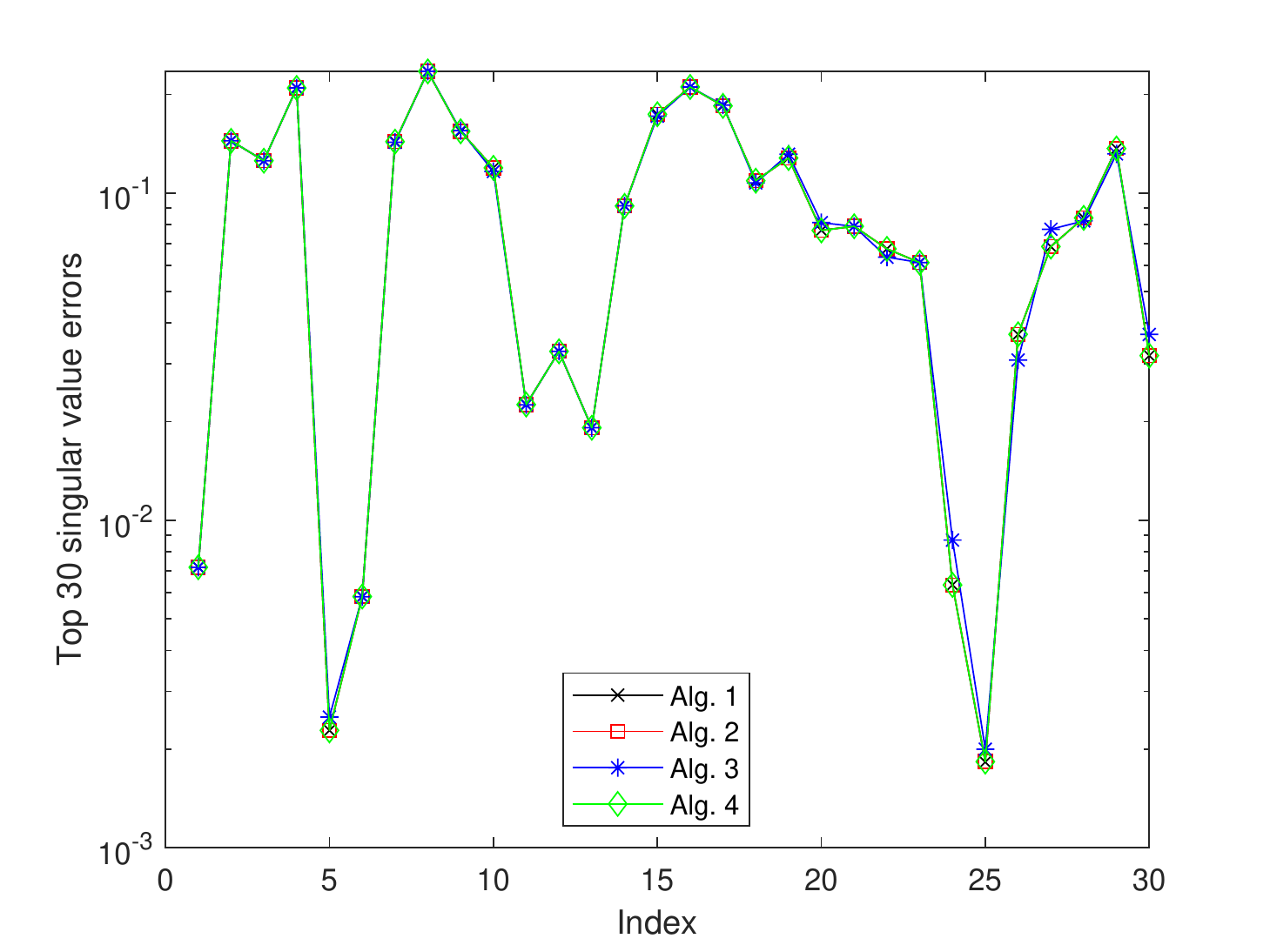}
}
\caption{Top 30 singular value relative errors for a fixed $A$.}
\label{fig:Serror_top}
\end{figure}
\section{Conclusions}
In this paper, we have proposed two single-pass randomized QLP decomposition algorithms for the low-rank approximation computing. These algorithms provide low-rank approximation of a matrix as the truncated SVD. We also give the  bounds for the matrix approximation error and the singular value approximation error, which hold with high probability. Numerical experiments also show that the two single-pass randomized  QLP decomposition algorithms have less computational cost and can achieve a desired accuracy.


\end{document}